\newcommand{\subref}[2][]{\text{\tiny #1\ref{#2}}}
\newtheorem*{rep@theorem}{\rep@title}
\newcommand{\newreptheorem}[2]{%
\newenvironment{rep#1}[1]{%
 \def\rep@title{#2 \ref{##1}}%
 \begin{rep@theorem}}%
 {\end{rep@theorem}}}
\newtheorem{theorem}{Theorem}
\newtheorem{lemma}[theorem]{Lemma}
\newtheorem{corollary}[theorem]{Corollary}
\newtheorem{conjecture}[theorem]{Conjecture}
\newtheorem{problem}[theorem]{Problem}
\newcommand{\bw}{\operatorname{bw}} 
\newcommand{\eps}{\varepsilon}
\begin{document}
\title{Ramsey-goodness---and otherwise}
\author{Peter Allen\footnote{Instituto de Matem\'atica e Estat\'{\i}stica, Universidade de
    S\~ao Paulo, Rua do Mat\~ao 1010, 05508--090~S\~ao Paulo, Brazil.
    Email:~\texttt{allen@ime.usp.br}. During this work PA was supported
    by DIMAP and Mathematics Institute, University of Warwick, U.K., EPSRC
    award EP/D063191/1.}
    , Graham Brightwell\footnotemark[2] and Jozef Skokan\footnotemark[2]  }
\maketitle
\footnotetext[2]{Department of Mathematics, London School of Economics,
    Houghton Street, London WC2A 2AE, United Kingdom. Email:~\texttt{\{g.brightwell|j.skokan\}@lse.ac.uk}. }
    
\begin{abstract}
A celebrated result of Chv\'atal, R\"odl, Szemer\'edi and Trotter states (in slightly weakened form) that, 
for every natural number $\Delta$, there is a constant $r_\Delta$ such that, for any connected 
$n$-vertex graph $G$ with maximum degree $\Delta$, the Ramsey number $R(G,G)$ is at most $r_\Delta n$, provided $n$ is sufficiently large.

In 1987, Burr made a strong conjecture implying that one may take $r_\Delta = \Delta$. 
However, Graham, R\"odl and Ruci\'nski showed, by taking $G$ to be a suitable expander graph, 
that necessarily $r_\Delta > 2^{c\Delta}$ for some constant $c>0$. We show that the use of 
expanders is essential: if we impose the additional restriction that the bandwidth of $G$ be 
at most some function $\beta (n) = o(n)$, then
$R(G,G) \le (2\chi(G)+4)n\leq (2\Delta+6)n$, i.e., $r_\Delta = 2\Delta +6$ suffices. On the 
other hand, we show that Burr's conjecture itself fails even for $P_n^k$, the $k$th power of 
a path $P_n$.




Brandt showed that for any $c$, if $\Delta$ is sufficiently large, there
are connected $n$-vertex graphs $G$ with $\Delta(G)\leq\Delta$ but
$R(G,K_3)>cn$. We show that, given $\Delta$ and $H$, there are
$\beta>0$ and $n_0$ such that, if $G$ is a connected graph on $n\ge n_0$ vertices with maximum degree at most
$\Delta$ and bandwidth at most $\beta n$, then we
have $R(G,H)=(\chi(H)-1)(n-1)+\sigma(H)$, where $\sigma(H)$ is the smallest
size of any part in any $\chi(H)$-partition of $H$.  We also show that the same conclusion holds without 
any restriction on the maximum degree of $G$ if the bandwidth of $G$ is at most $\eps(H) \log n/\log\log n$.  

\end{abstract}

\thispagestyle{empty}
\section{Introduction}

Given two graphs $G$ and $H$, the Ramsey number $R(G,H)$ is defined to be the smallest $N$ such that,
however the edges of $K_N$ are coloured with red and blue, there exists either a red copy of $G$ or a
blue copy of $H$.

In the 1980s, Burr~\cite{BurrConj} and Burr and Erd\H{o}s~\cite{BurrErdos} made various seemingly natural
conjectures on the magnitudes of Ramsey numbers $R(G,H)$ in which one or both graphs is sparse.  In the
1990s, Brandt~\cite{Brandt} and Graham, R\"odl and Ruci\'nski~\cite{GRRLinRam} used expander graphs to give
counterexamples to these conjectures.  Our aim in this paper is to show that limiting the expansion of the
graphs suffices to (almost) rescue the conjectures.

There are two slightly different sets of results within this paper.  We are interested in
$R(G,H)$ in the case when $H$ is a (small) fixed graph, and $G$ may be much larger, and we are also
interested in the case when $H=G$.  The results we prove have a similar flavour, and we use similar
techniques.  To start with, we think of $H$ as a fixed graph.

A very simple general lower bound on the Ramsey number, given by Chv\'atal and Harary~\cite{ChvHar}, is
$R(G,H)\geq (\chi(H)-1)(|G|-1)+1$ for connected graphs $G$ -- here $|G|$ denotes the number of vertices
of $G$.  To see this, consider a two-colouring of the complete graph consisting of
$\chi(H)-1$ disjoint red cliques each on $|G|-1$ vertices, with only blue edges between them.  The red
components are too small to contain $G$, and the chromatic number of the subgraph of blue edges is too
small for $H$.

Burr and Erd\H{o}s~\cite{BurrErdos} defined a connected graph $G$ to be \textit{$p$-good} if
$R(G,K_p)=(p-1)(|G|-1)+1$; in other words, if the Ramsey number is equal to the lower bound of
Chv\'atal and Harary.  A family $\mathcal{G}$ of graphs is defined to be \textit{$p$-good} if there is some
$n_0$ such that every $G\in\mathcal{G}$ with $|G|\geq n_0$ is $p$-good.  Burr and Erd\H{o}s were interested
in the problem of finding families of graphs which are $p$-good for all $p$.  Chv\'atal~\cite{Chv} showed
that the family of trees is $p$-good for all $p$.  Burr and Erd\H{o}s~\cite{BurrErdos} showed that for any
$k$, the family of connected graphs with bandwidth at most $k$ is $p$-good for all $p$ (although the value of
$n_0$ does increase with $p$).  They made several conjectures regarding larger families of $p$-good graphs,
many of which have been answered in a recent paper of Nikiforov and Rousseau~\cite{NikiRous}.  One remaining
open question is to determine whether the family of hypercubes is $p$-good for any $p\geq 3$.

As observed by Burr~\cite{Burr}, the idea of the construction of Chv\'atal and Harary can be adapted to
give a stronger lower bound in many cases.  To explain this, we define a graph parameter: for any graph
$H$ of chromatic number $\chi(H)$, let
$\sigma(H)$ be the minimum size of a colour class in a proper $\chi(H)$-colouring of $H$.

\begin{lemma} [Burr~\cite{Burr}] \label{LowBd} For all graphs $G$ and $H$, with $G$ connected and
$|G| = n > \sigma(H)$, we have
$$
R(G,H)\geq (\chi(H)-1)(n-1)+\sigma(H).
$$
\end{lemma}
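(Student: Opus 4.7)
The lemma is a lower bound, so the approach is purely constructive: exhibit a red/blue colouring of $K_{N-1}$, where $N=(\chi(H)-1)(n-1)+\sigma(H)$, containing neither a red copy of $G$ nor a blue copy of $H$. The Chv\'atal--Harary construction uses $\chi(H)-1$ red cliques of size $n-1$; the task is to squeeze in an extra $\sigma(H)-1$ vertices without creating either a red component of size $\geq n$ or enough room in the blue graph to host $H$.

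My construction modifies the Chv\'atal--Harary one by enlarging a single part and marking an exceptional set inside it. Take vertex blocks $A_1,\dots,A_{\chi(H)-1}$ with $|A_i|=n-1$ for $i\geq 2$ and $|A_1|=n-1+(\sigma(H)-1)$, and single out a subset $U\subseteq A_1$ with $|U|=\sigma(H)-1$; note $|A_1\setminus U|=n-1\geq 1$ because $n>\sigma(H)$. Colour an edge red if its two endpoints both lie in one of the sets $A_1\setminus U$, $U$, $A_2$, \dots, $A_{\chi(H)-1}$, and blue otherwise. The total number of vertices is $(\chi(H)-1)(n-1)+\sigma(H)-1=N-1$.

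The red graph is then a disjoint union of cliques of sizes $n-1$, $\sigma(H)-1$, $n-1$, \dots, $n-1$, all strictly smaller than $n$; since $G$ is connected on $n$ vertices, no red copy of $G$ exists. The blue graph is the complete $\chi(H)$-partite graph with parts $U, A_1\setminus U, A_2,\dots,A_{\chi(H)-1}$ of sizes $\sigma(H)-1, n-1,\dots,n-1$. Any embedding of $H$ into a complete $\chi(H)$-partite graph induces a proper $\chi(H)$-colouring of $H$ in which each colour class is contained in one part of the host. So some colour class of $H$ would have to fit inside $U$, a set of size only $\sigma(H)-1$; this contradicts the definition of $\sigma(H)$ as the minimum colour-class size in any proper $\chi(H)$-colouring of $H$. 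Hence the blue graph contains no copy of $H$, giving $R(G,H)\geq N$.

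The only thing requiring care is the verification that the blue graph avoids $H$ (the red side is immediate): the argument rests entirely on the fact that a $\chi(H)$-partite embedding yields a proper $\chi(H)$-colouring of $H$, which forces every part of the host to have size at least $\sigma(H)$. Edge cases $\chi(H)\in\{1,2\}$ and $\sigma(H)=1$ reduce either to the trivial bound or to the original Chv\'atal--Harary construction and need at most a line of comment.
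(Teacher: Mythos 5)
Your construction is the same as the paper's: the red graph is a disjoint union of $\chi(H)-1$ cliques on $n-1$ vertices plus one on $\sigma(H)-1$ vertices, and the verification on the blue side is the observation that an embedding of $H$ would induce a proper $\chi(H)$-colouring with a class of size at most $\sigma(H)-1$. Correct, and essentially identical to the paper's argument.
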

\begin{proof}
Let $K$ be the two-coloured complete graph whose red edges form a disjoint union of $\chi(H)-1$ cliques
on $n-1$ vertices and one on $\sigma(H)-1$ vertices.  Then the red components of $K$ are again too small
to contain $G$.  If there were an embedding of $H$ into the blue edges then it would give a $\chi(H)$-colouring
of $H$ with one class of size at most $\sigma(H)-1$, which contradicts the definition of $\sigma(H)$.
\end{proof}

We say that a connected graph $G$ is \textit{$H$-good} if $R(G,H)=(\chi(H)-1)(|G|-1)+\sigma(H)$, and that
a family of graphs $\mathcal{G}$ is \textit{$H$-good} if all sufficiently large members of $\mathcal{G}$ are
$H$-good.  Finally, we call a graph class $\mathcal{G}$ \textit{always-good} if $\mathcal{G}$ is $H$-good
for every graph $H$.  Burr~\cite{Burr} showed that, for all graphs $G_1$, the class of graphs homeomorphic
to $G_1$ is always-good.

 We mention two barriers to always-goodness. One necessary property for 
 a family ${\mathcal G}$ to be always-good is that ${\mathcal G}$ does 
 not contain arbitrarily large graphs $G$ in which the maximum degree 
 $\Delta(G)$ is nearly as large as $|G|$.
 An explicit version of this principle is illustrated by a construction 
 of Brown~\cite{Brown} yielding, for every prime $p$, a 
 $(p^2+p+1)$-vertex graph $H_p$ with minimum degree $p+1$ containing no 
 copy of $K_{2,2}$.  Let $\Gamma$ be the two-coloured complete graph 
 obtained from $H_p$ by colouring its edges blue and non-edges red. By 
 definition, $\Gamma$ does not contain either a blue copy of $K_{2,2}$ 
 or a vertex of red-degree $p^2$.  It follows that, if $G$ is any graph
 on $p^2+p$ vertices with $\Delta(G)\geq p^2$, then $R(G,K_{2,2})\ge 
 p^2+p+2$, which is strictly greater than 
 $(\chi(K_{2,2})-1)(|G|-1)+\sigma(K_{2,2}) = p^2+p+1$, so $G$ is not
 $K_{2,2}$-good.  One can clearly obtain better bounds by using larger 
 bipartite graphs in place of $K_{2,2}$.

 The class of trees is $K_p$-good for all $p$ but -- for instance by 
 the above argument -- not always-good. The graph families considered 
 by Nikiforov and Rousseau also contain graphs with such high degrees, 
 and are thus not always-good.

 A second barrier to always-goodness is strong vertex expansion. Burr 
 and Erd\H{o}s conjectured that, for any $\Delta$ and $p$, if $n$ is 
 sufficiently large, then any $n$-vertex graph $G$ with $\Delta(G)\leq 
 \Delta$ is $p$-good; Burr~\cite{BurrConj} made the natural 
 strengthening to conjecture that for any $\Delta$, the graph class 
 $\{G\colon\Delta(G)\leq \Delta\}$ is always-good. However
 Brandt~\cite{Brandt} showed that, for $\Delta \ge 168$, the family of 
 all $\Delta$-regular graphs is not even $K_3$-good; Nikiforov and 
 Rousseau~\cite{NikiRous} reduced this degree requirement to $100$. 
 Both proofs relied upon the fact that such graphs can have strong 
 vertex expansion properties. To be precise, Brandt proved the 
 following result showing that Burr and Erd\H{o}s' conjecture is
 already wrong by an arbitrarily large factor for $p=3$.

 \begin{theorem} [Brandt~\cite{Brandt}] Let $c$ be any constant. Then 
  if $\Delta$ and $n$ are sufficiently large, there exists an 
  $n$-vertex graph $G$ with $\Delta(G)\leq \Delta$ such that 
  $R(G,K_3)>cn$.
 \end{theorem}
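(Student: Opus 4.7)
The statement asks, for every constant $c$ and all sufficiently large $\Delta$ and $n$, for an $n$-vertex graph $G$ with $\Delta(G)\leq\Delta$ satisfying $R(G,K_3)>cn$. Unpacking the Ramsey inequality, this is equivalent to producing a triangle-free graph $B$ on $N=\lceil cn\rceil$ vertices such that $G$ is not a subgraph of the complement $\bar B$; the $2$-colouring of $E(K_N)$ that puts $E(B)$ in blue and its complement in red then witnesses the lower bound.

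The plan is to pair a $\Delta$-regular expander $G$ with a pseudorandom triangle-free $B$, and to argue that the expansion of $G$ obstructs its embedding into $\bar B$. For $G$, I would take a typical $\Delta$-regular graph on $n$ vertices; for $\Delta$ sufficiently large, $G$ is almost surely connected and a strong vertex expander, so that every $S\subseteq V(G)$ with $|S|\leq n/2$ satisfies $|N_G(S)\setminus S|\geq (1+\eta(\Delta))|S|$ with $\eta(\Delta)\to 1$ as $\Delta\to\infty$. For $B$, I would take a triangle-free graph on $N$ vertices with strong pseudorandomness---either a random triangle-free graph of suitably tuned density or an explicit construction such as a Kneser or polarity graph---chosen so that $B$ is approximately regular, that $\alpha(B)$ is small, and that every $n$-vertex induced subgraph of $B$ still resembles a typical triangle-free graph of the ambient density.

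The heart of the argument is to show that no injective $\phi\colon V(G)\to V(B)$ can send every edge of $G$ to a non-edge of $B$. Supposing one did, with $T=\phi(V(G))$, the $\Delta n/2$ edges of $G$ would correspond to $\Delta n/2$ non-edges of $B[T]$. The expansion of $G$ forbids any partition of $V(G)$ into few parts with few cross-edges, whereas the pseudorandom triangle-free structure of $B[T]$ forces any subgraph of $\bar B[T]$ of density matching $G$ to decompose into a bounded number of clique-like regions; this structural mismatch should yield a contradiction via a counting or averaging argument. A first-moment or probabilistic deletion argument then converts this into the required deterministic pair $(G,B)$.

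The principal obstacle is this third step: because $\bar B$ is very dense, naive edge counts yield no contradiction, and the obstruction must be extracted from a delicate interplay between the expansion of $G$ and the triangle-freeness of $B$. In Brandt's original argument this is handled by combining specific expander constructions (e.g., graphs of large girth with strong vertex expansion) with a structural dichotomy for dense triangle-free graphs that separates near-bipartite behaviour from strongly non-bipartite behaviour; the final parameter choices require $\Delta$ to be large so that the expansion becomes strong enough to force the Ramsey lower bound to grow with $c$.
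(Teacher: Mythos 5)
The paper does not prove this theorem; it is quoted as a result of Brandt~\cite{Brandt}, so there is no in-paper proof to compare against. Judged on its own terms, your proposal correctly identifies the standard high-level strategy --- pair a $\Delta$-regular expander $G$ with a triangle-free graph $B$ on $N=\lceil cn\rceil$ vertices and argue that $G\not\subseteq\bar{B}$ --- but it does not carry out the one step that actually constitutes the theorem. Your third paragraph asserts that ``the pseudorandom triangle-free structure of $B[T]$ forces any subgraph of $\bar{B}[T]$ of density matching $G$ to decompose into a bounded number of clique-like regions,'' which is neither proved nor obviously true, and your fourth paragraph concedes outright that you do not know what argument converts the claimed ``structural mismatch'' into a contradiction. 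That missing lemma --- a concrete obstruction to embedding a strong bounded-degree vertex expander into the complement of a triangle-free graph on $cn$ vertices --- is the entire mathematical content of the theorem; as written, the proposal is a description of the problem rather than a proof.

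There is also a problem with the proposed host $B$. A uniformly random triangle-free graph on $N$ vertices is bipartite with high probability (Erd\H{o}s--Kleitman--Rothschild), and the complement of a bipartite graph contains a clique on its larger part, which has at least $N/2 = cn/2 \geq n$ vertices once $c\geq 2$; then $\bar{B}$ contains $K_n\supseteq G$ trivially. Kneser graphs fail similarly: the triangle-free ones, $K(m,k)$ with $m<3k$, have independence number at least $N/3$ by Erd\H{o}s--Ko--Rado, so again $\bar{B}$ has a clique large enough to swallow $G$; and polarity graphs are not even triangle-free. Any workable choice must be a triangle-free graph whose independence number is $o(n)$ --- for instance the output of the triangle-free process, or a sparse random graph with triangles deleted --- and even with such a $B$ in hand you would still need to supply the embedding obstruction, which is precisely the step your sketch leaves blank.
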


 We show that Brandt's use of expander graphs is necessary: if 
 $\mathcal{G}$ is a graph class with not only bounded maximum
 degree but also suitably limited expansion, then the more general
 conjecture of Burr is rescued.

 We first state our results in terms of restricting the bandwidth 
 of~$G$.

 Given a graph $F$, the $k$th power of $F$, denoted $F^k$, is the graph 
 with vertex set $V(F)$ and edges between any two vertices whose 
 distance in $F$ is at most $k$.  In particular, $P_n^k$ is the $k$th 
 power of the $n$-vertex path $P_n$. For any graph $G$ on $n$ vertices, 
 the \textit{bandwidth} of $G$, $\bw(G)$, is the smallest $k$ such that 
 $G$ is a subgraph of $P_n^k$.

 First we consider what happens if we bound the bandwidth of graphs $G$ 
 in the class ${\mathcal G}$, but do not further bound the degree. In 
 this case, to show that ${\mathcal G}$ is always-good, it suffices to 
 show that the class of graphs $P_n^k$ is always-good.  One may think 
 of $k$ as being fixed, but in fact our proof works provided $k$ grows 
 more slowly than $\log n / \log \log n$.

\begin{theorem}\label{2alwaysgood}

 For each fixed graph $H$ and natural number $k$, 
 $R(P_n^k,H) = (\chi(H)-1)(n-1) + \sigma(H)$ whenever 
 $n \ge (20k|H|)^{16k|H|}$.

 In particular, if $\kappa(n)$ is any function with 
 $\kappa(n) = o(\log n/\log \log n)$, then the graph class  
 $\mathcal{B}_\kappa=\{G : \bw(G)\leq \kappa(|G|)$ and $G$ is 
 connected$\}$ is always-good.
\end{theorem}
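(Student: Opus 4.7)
The lower bound is immediate from Lemma~\ref{LowBd}. For the upper bound, set $r=\chi(H)$, $\sigma=\sigma(H)$, and $N=(r-1)(n-1)+\sigma$; given a red/blue colouring of $K_N$ with no blue copy of~$H$, I must exhibit a red $P_n^k$. My plan is an induction on $r$, with the base case $r=1$ (where $H$ is edgeless and $\sigma=|H|$) being trivial.

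For the inductive step, I fix a proper $r$-colouring of $H$ with smallest class $U$ of size $\sigma$ and set $H'=H-U$, so that $\chi(H')\le r-1$ and, by induction, $R(P_n^k,H')\le(r-2)(n-1)+\sigma(H')$. The core device is an ``onion-peeling'' of blue-heavy vertices: greedily choose $v_1,v_2,\dots$ with each $v_{i+1}$ lying in the common blue-neighbourhood $B_i$ of $v_1,\dots,v_i$, continuing as long as $|B_i|$ exceeds the inductive Ramsey threshold. If this process reaches $i=\sigma$, then the inductive hypothesis applied inside $B_\sigma$ yields either a red $P_n^k$ (and we are done) or a blue $H'$ which, combined with $v_1,\dots,v_\sigma$ playing the role of the independent class $U$, forms a blue $H$, contradicting the hypothesis---the required cross-edges are all blue because each $v_j$ is blue-adjacent to every vertex of $B_\sigma$ by construction.

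Otherwise the peeling terminates at some $i^*<\sigma$, leaving a large residual set $X\subseteq V(K_N)\setminus\{v_1,\dots,v_{i^*}\}$ whose vertices have small blue-degree within $X$. In this case the task reduces to embedding $P_n^k$ in the red graph on~$X$. My plan is a greedy embedding: place $x_1,\dots,x_n$ in order along the underlying path, choosing $x_{j+1}$ from the common red-neighbourhood in $X$ of the previous $k$ vertices $x_{\max(1,j-k+1)},\dots,x_j$. Since each vertex in $X$ has few blue-neighbours in $X$, iterated pigeonhole gives that the common red-neighbourhood of any $k$-subset of $X$ has size at least $|X|-k\cdot\max_{x\in X}(\text{blue-deg in }X)$, which remains positive throughout all $n$ steps once $n\ge(20k|H|)^{16k|H|}$.

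The main obstacle will be this greedy embedding step in the second case: Chv\'atal's classical proof of Ramsey-goodness for trees needs only a minimum-red-degree condition, because trees embed one new vertex at a time against a single constraint, but embedding $P_n^k$ requires a red $K_{k+1}$ abutting each $k$-vertex window of the partial path. This is why control on common red-neighbourhoods of $k$-subsets is essential, and the reason the numerical bound grows exponentially in $k|H|$. The final conclusion about the class $\mathcal{B}_\kappa$ is then immediate by setting $k=\kappa(n)$: when $\kappa(n)=o(\log n/\log\log n)$, the hypothesis $n\ge(20\kappa(n)|H|)^{16\kappa(n)|H|}$ holds for all sufficiently large $n$, so every sufficiently large connected $G$ with $\bw(G)\le\kappa(|G|)$ is $H$-good.
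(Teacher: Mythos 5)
Your overall skeleton (prove the lower bound via Lemma~\ref{LowBd}, induct on $\chi(H)$, peel blue-heavy vertices to build the colour class $U$, and in the alternative case embed $P_n^k$ greedily in the red graph) is the Chv\'atal/Burr--Erd\H os scheme for trees, and it does not survive the step from $T_n$ to $P_n^k$, for the reason you yourself flag but then do not actually resolve. When the peeling halts at some $i^*<\sigma$, the information you get is that no vertex of $X=B_{i^*}$ has blue degree in $X$ exceeding the inductive threshold $(\chi(H')-1)(n-1)+\sigma(H')\approx(\chi(H)-2)n$. That is far from ``small'': it is a positive fraction of $|X|$ bounded away from $0$, typically close to $1$. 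Your greedy bound on a common red-neighbourhood is $|X|-k\cdot\max_x(\text{blue-deg})$, and with $\max(\text{blue-deg})\approx\frac{\chi(H)-2}{\chi(H)-1}|X|$ this quantity is negative already for $k\ge 2$ and $\chi(H)\ge 3$. There is a genuine quantitative gap between the degree bound that the peeling termination furnishes (roughly $(\chi(H)-2)n$) and the bound you would need for the $k$-window embedding (roughly $n/k$); nothing in the proposal bridges it, and simply demanding $n\ge(20k|H|)^{16k|H|}$ does not help, since the ratio of blue degree to $|X|$ does not improve as $n$ grows. The same gap is why you cannot control $|X|$ either: after even one peeling step, $|X|\le|N_{\rm blue}(v_1)|$, which can be well below~$n$.

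The paper's proof takes a structurally different route precisely to obtain the tiny blue-degree condition you need. It first partitions $V(G)$ into red $s$-cliques using Ramsey's theorem (with $s$ roughly $(k|H|/\eps)^{O(k)}$, the source of the exponential bound), then forms an auxiliary complete graph $G^*$ on these cliques with a notion of red/blue adjacency given by the presence or absence of a red $K_{2k,2k}$, applies the stability lemma for $R(P_m,K_\ell)$ (Lemma~\ref{StabLem2}) to $G^*$, and pulls the resulting clique structure of $G^*$ back to clusters $V_1,\dots,V_{\chi(H)-1}$ in $G$ in which blue edges are rare (an $\eps$-fraction with $\eps\approx1/(8k^2|H|)$), plus a small leftover set $L$. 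Only then is a greedy/Sauer--Spencer embedding of $P_n^k$ possible (Lemma~\ref{GenEmbed}), because the blue degrees inside a cluster are $O(\eps n)\ll n/k$, not $\Theta(n)$. In short, the step you would need to supply is exactly Lemma~\ref{StabPnkH}: a global stability statement, not a local degree dichotomy. (Minor additional points: the constraint $v_{i+1}\in B_i$ is unnecessary for using the $v_j$ as the independent class $U$, since a blue copy of $H$ places no requirement on the edges among those $\sigma$ images; and your concluding deduction for $\mathcal{B}_\kappa$ from the first part of the theorem is correct.)
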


 This result, even in the case where $\kappa(n)$ is the constant  
 function $k$, includes the result of Burr and Erd\H os stating that 
 ${\mathcal B}_k$ is $p$-good for all $p$, as well as the result of 
 Burr that the class of graphs homeomorphic to any fixed $G_1$ is 
 always-good.

 If $\kappa(n) = n^\eps$, for any fixed $\eps > 0$, then the class 
 ${\mathcal B}_\kappa$ is not always-good. To see this, note that 
 $R(K_s,K_t) = \Omega(s^{t/2})$ for fixed $t$ as $s \to \infty$, by a 
 standard probabilistic argument \cite{SpLB}. Therefore also 
 $R(P_n^s,K_t)=\Omega(s^{t/2})$, since $P_n^s$ contains $K_s$, and so 
 the class of connected graphs with bandwidth at most $n^\eps$ is not 
 $K_{4/\eps}$-good.

 Our proof of Theorem~\ref{2alwaysgood} uses a method  
 from~\cite{TwoCycle}, inspired by the Szemer\'edi Regularity 
 Lemma~\cite{SzReg}. This method yields a partition of $V(G)$ and an 
 auxiliary graph $G^*$ on the parts, but the partition arises
 from the direct use of Ramsey's theorem rather than an iterated 
 refinement procedure, enabling us to obtain a somewhat reasonable 
 bound on the size of $n$ we need.

 Our next theorem shows that, if we put an absolute bound on the 
 maximum degree of graphs in our class, it is enough to impose any 
 upper bound on the bandwidth that is sublinear in the order of the 
 graph.

\begin{theorem} \label{NExpAlwaysGood} 
 For every fixed $\Delta$, and every function $\beta(n)=o(n)$, the 
 graph class
 $$
  \mathcal{G}_{\Delta,\beta} 
   = 
  \{ G : \Delta(G) \le \Delta, \bw(G) \le \beta(|G|), 
          \mbox{ and $G$ is connected}\}
 $$
 is always-good.
\end{theorem}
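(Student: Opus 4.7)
The lower bound $R(G,H) \ge N := (\chi(H)-1)(n-1) + \sigma(H)$ is given by Lemma~\ref{LowBd}, so it suffices to prove the matching upper bound: fix any $2$-edge-colouring of $K_N$ with no blue copy of $H$, and we seek a red copy of $G$, where $G$ is connected on $n$ vertices with $\Delta(G)\leq\Delta$ and $\bw(G)\leq\beta(n)=o(n)$. Write $p=\chi(H)$ and $\sigma=\sigma(H)$.

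The plan is to follow a regularity--structure--embedding framework. Applying the two-colour Szemer\'edi regularity lemma (with parameter $\eps$ chosen small in terms of $|H|$ and $\Delta$) to the colouring of $K_N$ produces an $\eps$-regular equipartition $V_0\cup V_1\cup\cdots\cup V_t$ with common cluster size $m\gg\beta(n)$, together with an essentially complete reduced graph $R$ on $[t]$ whose edges are each coloured red or blue according to which colour has the higher density in the corresponding regular pair. Since $K_N$ has no blue $H$, the Counting Lemma forbids $R_{\text{blue}}$ from containing any ``thickened'' copy of $H$ (with each vertex blown up to an independent set of $|H|$ clusters and each edge to a complete bipartite graph), because any such structure would yield a blue $H$ in $K_N$.

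The heart of the argument is then a structural (``stability'') lemma: in any $2$-coloured almost-complete graph $R$ on $t$ vertices whose blue part has no thickened $H$, the red part $R_{\text{red}}$ admits, after deleting a few vertices, a partition into $p$ blocks $B_1,\ldots,B_p$ with $|B_i|$ close to a scaled version of the extremal profile $(n-1,\ldots,n-1,\sigma-1)$; each $B_i$ is nearly complete in red and, after a sub-refinement, carries a super-regular red structure. Because $K_N$ has one more vertex than the extremal construction, pigeonhole forces either some block $B_i$ with $i\leq p-1$ to correspond to a union of clusters of total size $\geq n$ in $K_N$, or the small block $B_p$ to have size $\geq\sigma$; in the latter case we would extract a blue $H$ from $K_N$ by placing a smallest colour class of an optimal $\chi(H)$-colouring of $H$ inside the blown-up $B_p$ and the other classes inside the blown-up large blocks, contradicting our hypothesis. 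Hence a large red block exists.

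Finally, given such a block, I slice the bandwidth-witnessing ordering of $V(G)$ into consecutive chunks of size comparable to $m$ but much larger than $\beta(n)$, so that edges of $G$ cross only between adjacent chunks, and assign the chunks to the clusters comprising the block, completing the embedding via the Blow-up Lemma of Koml\'os, S\'ark\"ozy and Szemer\'edi, whose hypotheses are satisfied thanks to $\Delta(G)\leq\Delta$ and the red super-regularity inside the block. The principal obstacle is the structural lemma in the third paragraph: converting ``no thickened blue $H$'' into an accurate $(p-1)$-plus-asymmetric-small-block decomposition of $R_{\text{red}}$ requires a Ramsey-goodness stability theorem for two-coloured complete graphs, and the asymmetric accounting for $\sigma(H)$, rather than a balanced equipartition, is what distinguishes this result from the classical $K_p$-good case and constitutes the bulk of the technical work.
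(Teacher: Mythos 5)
Your high-level strategy (regularity lemma, stability structure in the reduced graph, embedding via bandwidth slicing and blow-up, and a pigeonhole to force either a large red block or a blue $H$) mirrors the paper's proof, which uses the Regularity Lemma, a modified form of Lemma~\ref{StabPnkH} applied to the cluster graph, Theorem~\ref{BSTThm} of B\"ottcher--Schacht--Taraz for the red embedding, and Lemma~\ref{GenEmbed} plus a greedy argument for the excess/blue-$H$ case. However, there are two genuine gaps, one of which is an actual error in the statement of your key lemma.

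First, the ``structural lemma'' as you state it is false. You hypothesise only that the blue part of $R$ contains no thickened $H$; but if $R$ is entirely red, that hypothesis is vacuously satisfied while the claimed near-extremal partition need not exist. The correct hypothesis must forbid a red structure as well -- in the paper's Lemma~\ref{StabPnkH} and its modification, the hypotheses are ``no red $P^k_n$ \emph{and} no blue $H$,'' and in the cluster graph the red exclusion becomes ``no red $P^{\chi(H)}_{(\eta+\mu)r}$.'' You need this extra hypothesis; only then does the stability argument (which in the paper bootstraps from a stability version of $R(P_n,K_\ell)$, Lemma~\ref{StabLem2}, applied to an auxiliary graph of red cliques) yield a partition into $\chi(H)-1$ near-red-cliques plus a small leftover.

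Second, the $\sigma(H)$ bookkeeping cannot be carried out at the cluster level. After scaling by the cluster size $m$, the ``small block'' of the extremal profile has $(\sigma(H)-1)/m < 1$ clusters, i.e.\ it is invisible in $R$; both $\sigma(H)-1$ and $\sigma(H)$ vertices are indistinguishable from zero after regularisation. The paper accordingly gets only a $(\chi(H)-1)$-block-plus-leftover decomposition of the cluster graph, and then \emph{returns to the original graph $F$} to pull the partition back, defines sets $V'_i$ and a leftover set $L$, and argues at the vertex level: either some $V'_i$ together with its adjustable leftover vertices has $\ge n$ vertices (giving a red $G$ by a Sauer--Spencer-type embedding, Lemma~\ref{GenEmbed}), or at least $\sigma(H)$ leftover vertices send mostly-blue edges to every block, from which a blue $H$ is extracted greedily. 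Your pigeonhole argument, carried out purely in $R$, cannot separate the case ``$B_p$ has $\sigma-1$ vertices'' from ``$B_p$ has $\sigma$ vertices.''

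A smaller point: your final embedding step (slicing the bandwidth order into chunks much larger than $\beta(n)$, then assigning to clusters and invoking the Blow-up Lemma) is precisely the content of Theorem~\ref{BSTThm}, and it is not an immediate consequence of the Blow-up Lemma alone -- one first needs a homomorphism from $G$ into a suitable blow-up of a power-of-a-path subgraph of the cluster graph with balanced cluster loads, which is the substantive part of~\cite{BST}. The paper cites this rather than reproving it, and you should too.
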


In other words, a class of connected graphs is always-good if the maximum degree of graphs in the class is
bounded and, for any $\beta >0$, all sufficiently large graphs $G$ in the class have bandwidth at most $\beta |G|$.

Our proof of Theorem~\ref{NExpAlwaysGood} follows the same lines as 
Theorem~\ref{2alwaysgood}, using also an embedding method of B\"ottcher, Schacht and 
Taraz~\cite{BST}, which does involve the use of the Regularity Lemma.


As we now explain, Theorem~\ref{NExpAlwaysGood} can be converted to a result where
the expansion properties of the graph $G$ are explicitly limited.

B\"ottcher, Pruessmann, Taraz and W\"urfl~\cite{BPTW} define a
graph $G$ to be $(b,\varepsilon)$-\textit{bounded} if, for every subgraph $G'$
of $G$ with $|G'|\geq b$, there exists a set $U\subset V(G')$ with $|U|\leq |G'|/2$ and
$|\Gamma(U)-U|\leq\varepsilon|U|$.  Here $\Gamma(U)$ denotes the neighbourhood of $U$ in the
graph $G'$.  They proved the following theorem.

\begin{theorem}  [B\"ottcher, Pruessmann, Taraz and W\"urfl~\cite{BPTW}] \label{BPTW}
For any $\Delta\ge 1$ and $\beta_1>0$, there exist $\varepsilon>0$,
$\beta_2>0$ and $n_0$ such that, whenever $n\geq n_0$, every $(\beta_2 n,\varepsilon)$-bounded
$n$-vertex graph $G$ with $\Delta(G)=\Delta$ has $\bw(G)\leq \beta_1 n$.
\end{theorem}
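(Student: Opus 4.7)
The strategy is to extract from the bounded-expansion hypothesis a hereditary balanced-separator structure on $G$, and then convert that structure into a linear layout of $V(G)$ of small bandwidth, using the max-degree bound to control how neighbourhoods spread in the ordering.

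First, I would upgrade the $(\beta_2 n,\varepsilon)$-bounded condition to the following balanced-separator statement: every subgraph $G'$ of $G$ with $|G'|\ge 4\beta_2 n$ admits a vertex separator $S$ of size at most $\varepsilon|G'|$ whose removal leaves every component of size at most $\tfrac{3}{4}|G'|$. The argument is a greedy peeling. Starting from $G'$, iteratively apply the defining property to the remainder $R_i$ to pick a set $U_i\subseteq R_i$ with $|U_i|\le|R_i|/2$ and external boundary at most $\varepsilon|U_i|$; stop once $\sum_i|U_i|$ first lies in $[|G'|/4,|G'|/2]$ (adjusting the last $U_i$ if it overshoots, which is cheap since $|U_i|$ is at most $|R_i|/2$ and $R_i$ has size at least $\beta_2 n$). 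The union of the external boundaries of the $U_i$ then forms a separator of total size at most $\varepsilon|G'|/2$ separating the peeled part from its complement.

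Apply this lemma recursively to obtain a binary decomposition tree of $G$ of depth $O(\log(1/\beta_2))$, whose leaves are pieces of size at most $\beta_2 n$. Build the linear ordering $\pi$ by recursion: at each internal node with separator $S$, output $L^\circ, L^\partial, S, R^\partial, R^\circ$, where $L^\partial$ (resp.~$R^\partial$) consists of those vertices of the left (resp.~right) subtree that are adjacent in $G$ to $S$ or to an ancestor separator, and $L^\circ, R^\circ$ are the remainders; within each half, proceed recursively in the same format. Because $\Delta(G)\le\Delta$, each boundary set $L^\partial$, $R^\partial$ has size at most $\Delta$ times the total size of ancestor separators plus $|S|$ itself. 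Every edge of $G$ then either lies inside a single leaf (contributing span at most $\beta_2 n$ to the bandwidth) or has one endpoint in some separator $S^*$ and the other in an adjacent boundary vertex; the span of the latter in $\pi$ is bounded by the sum of $(1+\Delta)|S'|$ over separators $S'$ on the root-to-leaf path through $S^*$. Since separator sizes decay geometrically---at depth $i$ they are at most $\varepsilon(3/4)^i n$---this sum is $O(\Delta\varepsilon n)$, and choosing $\varepsilon$ and $\beta_2$ small enough in terms of $\Delta$ and $\beta_1$ yields $\bw(G)\le\beta_1 n$.

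The main obstacle is making the third step watertight: the recursive ordering inside each subtree must simultaneously place vertices adjacent to the current separator near the relevant end, while honouring all boundary constraints inherited from ancestor separators. Carrying this through cleanly requires a strengthened inductive statement---essentially a ``bandwidth-with-prescribed-boundary'' result---together with careful accounting showing that at each recursive level the number of boundary vertices forced to the ends remains a small fraction of the current subgraph size. Both ingredients rely in an essential way on the bounded maximum degree (to bound the boundaries) and on the geometric decay of separator sizes down the decomposition tree (to bound their cumulative effect).
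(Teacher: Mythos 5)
Theorem~\ref{BPTW} is not proved in this paper: it is quoted from B\"ottcher, Pruessmann, Taraz and W\"urfl~\cite{BPTW} and used only as a black box, in Corollary~\ref{NExpCor} and in the unnumbered corollary following Theorem~\ref{BBwRam}, to convert sublinear-bandwidth hypotheses into bounded-expansion hypotheses. There is therefore no in-paper argument to compare yours against, so I assess your sketch on its own terms. Your route---upgrade $(\beta_2 n,\varepsilon)$-boundedness to hereditary balanced separators by greedy peeling, build a recursive decomposition tree with geometrically decaying piece sizes, then unfold the tree into a layout---is the natural one and is close in spirit to what~\cite{BPTW} do (they pass explicitly through treewidth as an intermediate; you go straight to an ordering). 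The peeling in your first step is correct, modulo two small points you should make explicit: boundary vertices of an $U_i$ may themselves be peeled in some later $U_j$, so the separator should be taken to be the union of the boundaries intersected with the final remainder; and the stated ``adjusting the last $U_i$'' is unnecessary, since overshooting only degrades the split to a $(1/4,3/4)$-balanced one, which suffices.

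The genuine gap is your third step, and you have honestly flagged it, but I want to stress that it is not mere bookkeeping---it is where the theorem's content lives. As written, ``within each half, proceed recursively in the same format'' does not respect the placement you want for $L^\partial$ and $R^\partial$: the recursion on the left half will itself produce an ordering $L'^\circ,L'^\partial,S',R'^\partial,R'^\circ$, and nothing forces the vertices of $L^\partial$ (which must sit at the right end of the left half, next to $S$) to land inside $R'^\partial\cup R'^\circ$. What is needed is precisely the ``bandwidth with prescribed boundary'' inductive statement you allude to: for a subgraph $G'$ and prescribed sets $B_L,B_R\subseteq V(G')$ each of size at most a fixed small fraction of $|G'|$, there is a layout of $G'$ of small bandwidth placing $B_L$ among the first few positions and $B_R$ among the last few. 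Proving this by recursion requires (i) the geometric decay of separator sizes so that cumulative boundaries along any root-to-leaf path total $O(\Delta\varepsilon n)$, (ii) the maximum-degree bound so each separator contributes at most $\Delta|S|$ new boundary vertices, and (iii) a check that $B_L\cup L^\partial$ and $B_R\cup R^\partial$ remain small enough fractions of the respective halves to be passed down. Your proposal lists these ingredients but does not formulate or establish the strengthened inductive claim; until it does, the argument is a plan rather than a proof.
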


 We call a graph class $\mathcal{G}$ \textit{non-expanding on large 
 subsets} if for any $\beta,\eps>0$ the following is true. There exists 
 $n_0$ such that if $G\in\mathcal{G}$ has $n\geq n_0$ vertices, then 
 $G$ is $(\beta n,\eps)$-bounded.

 An immediate corollary of Theorem~\ref{NExpAlwaysGood}, together with
 Theorem~\ref{BPTW}, is the following.

\begin{corollary}\label{NExpCor} 
 Given $\Delta$, let $\mathcal{G}$ be a class of connected graphs of 
 maximum degree $\Delta$ which is non-expanding on large subsets. Then 
 $\mathcal{G}$ is always-good.
\end{corollary}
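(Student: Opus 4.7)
Fix a graph $H$; the goal is to show that every sufficiently large $G \in \mathcal{G}$ is $H$-good. Since Theorem~\ref{NExpAlwaysGood} already hands us always-goodness of the class $\mathcal{G}_{\Delta,\beta}$ for \emph{every} fixed function $\beta(n) = o(n)$, it suffices to exhibit a single such $\beta$ with the property that all but finitely many $G \in \mathcal{G}$ satisfy $\bw(G) \le \beta(|G|)$; then $\mathcal{G}$ lies (eventually) inside $\mathcal{G}_{\Delta,\beta}$ and we are done. Theorem~\ref{BPTW} is the tool for converting non-expansion on large subsets into a bandwidth bound, and the only real work is to glue these two results together by a diagonal construction of $\beta$.

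Concretely, for each positive integer $k$ I would apply Theorem~\ref{BPTW} with the given $\Delta$ and with $\beta_1 = 1/k$, obtaining constants $\eps_k > 0$, $\beta_{2,k} > 0$ and a threshold $n_k^{(1)}$ such that any $(\beta_{2,k} n, \eps_k)$-bounded graph on $n \ge n_k^{(1)}$ vertices with maximum degree $\Delta$ has bandwidth at most $n/k$. Because $\mathcal{G}$ is non-expanding on large subsets, applied with these particular $\beta_{2,k}$ and $\eps_k$, there is a threshold $n_k^{(2)}$ above which every $G \in \mathcal{G}$ is $(\beta_{2,k} |G|, \eps_k)$-bounded. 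Put $N_k = \max(n_k^{(1)}, n_k^{(2)})$, chosen strictly increasing in $k$; then every $G \in \mathcal{G}$ with $|G| \ge N_k$ has $\bw(G) \le |G|/k$. Define
\[
  \beta(n) = n/k \quad \text{for } N_k \le n < N_{k+1},
\]
and (say) $\beta(n) = n$ for $n < N_1$. Clearly $\beta(n)/n \to 0$, so $\beta(n) = o(n)$, while every $G \in \mathcal{G}$ with $|G| \ge N_1$ satisfies $\Delta(G) \le \Delta$ and $\bw(G) \le \beta(|G|)$, hence lies in $\mathcal{G}_{\Delta,\beta}$.

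Applying Theorem~\ref{NExpAlwaysGood} to this $\beta$ shows that $\mathcal{G}_{\Delta,\beta}$ is $H$-good, so all sufficiently large graphs in $\mathcal{G}_{\Delta,\beta}$---and therefore all sufficiently large $G \in \mathcal{G}$---are $H$-good. Since $H$ was arbitrary, $\mathcal{G}$ is always-good. There is no real obstacle here: the entire argument is a bookkeeping exercise, and the only mildly subtle point is the diagonal choice of $\beta$, which is needed because Theorem~\ref{BPTW} delivers a distinct bandwidth rate for each $\beta_1$ and we must weave them into a single sublinear function.
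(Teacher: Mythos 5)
Your argument is correct, and it matches the route the paper signals: Corollary~\ref{NExpCor} is obtained by composing Theorem~\ref{BPTW} with Theorem~\ref{NExpAlwaysGood}. The paper simply declares the corollary ``immediate,'' so no written proof is given there; the one genuine subtlety is exactly the quantifier mismatch you identified --- Theorem~\ref{BPTW} gives a fresh pair $(\beta_2,\eps)$ for each target bandwidth rate $\beta_1$, while Theorem~\ref{NExpAlwaysGood} wants a single function $\beta(n)=o(n)$ --- and your diagonal construction of $\beta$ (setting $\beta(n)=n/k$ on $[N_k,N_{k+1})$) resolves it cleanly. Two cosmetic remarks: fixing $H$ at the outset is unnecessary, since the $\beta$ you build is independent of $H$ and you may simply conclude that $\mathcal{G}$ eventually lies in $\mathcal{G}_{\Delta,\beta}$, which is already always-good; and ``chosen strictly increasing'' should be spelled out as, say, $N_k=\max(n_k^{(1)},n_k^{(2)},N_{k-1}+1)$, so that $N_k\to\infty$ is automatic. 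Neither affects correctness.
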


 Corollary~\ref{NExpCor} is best possible in the following sense.  
 Brandt's method~\cite{Brandt} can be adapted easily to show that, for 
 any sufficiently large $\Delta$ and $\beta>0$, if $n$ and $p$ are 
 sufficiently large, and $G$ is an $n$-vertex graph with 
 $\Delta(G)\leq \Delta$ which does possess a subgraph $G'$ on at least 
 $\beta n$ vertices with strong expansion properties (for example: if 
 $G'$ is a typical $\Delta$-regular graph), then $G$ is not $p$-good.

There is another sense in which Brandt's family of counterexamples is the simplest possible. 
He showed that the class of connected graphs with maximum degree at most $\Delta$ is not $H$-good for 
$H=K_3$.  On the other hand, 
this class of graphs is $H$-good for every \textit{bipartite} $H$. 

\begin{theorem} \label{bipartite} 
For each fixed $\Delta$, let ${\mathcal D}_\Delta$ be the class of connected graphs with maximum
degree at most $\Delta$.  Then ${\mathcal D}_\Delta$ is $H$-good for every bipartite graph $H$. 
\end{theorem}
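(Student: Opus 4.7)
Assume without loss of generality that $H$ has at least one edge, so $\chi(H) = 2$, and set $\sigma = \sigma(H)$, $h = |V(H)|$, so that $H \subseteq K_{\sigma, h - \sigma}$. Since the lower bound is Lemma~\ref{LowBd}, it suffices to show $R(G, H) \le n - 1 + \sigma$. Write $N = n - 1 + \sigma$, fix a red/blue colouring of $K_N$ with no blue copy of $H$, and aim to produce a red copy of $G$. The plan is a dichotomy on blue degree, with the Sauer--Spencer packing theorem as the embedding tool.

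Let $B$ denote the blue graph. Since $B$ is $K_{\sigma, h - \sigma}$-free, every $\sigma$-subset $T \subseteq V(K_N)$ satisfies $|\bigcap_{u \in T} N_B(u)| \le h - \sigma - 1$, and $e(B) = O_H(N^{2 - 1/\sigma})$ by K\H{o}v\'ari--S\'os--Tur\'an. Fix a threshold $D = C_{H,\Delta}\, N^{1 - 1/\sigma}$ and let $S = \{v : d_B(v) \ge D\}$. In the easy case $|S| \le \sigma - 1$, the set $V \setminus S$ has at least $N - (\sigma - 1) = n$ vertices, each of blue degree less than $D$; pick any $V' \subseteq V \setminus S$ with $|V'| = n$. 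Since $D = o(n)$ for $\sigma \ge 2$ (for $\sigma = 1$, $H$ is essentially a star $K_{1,h-1}$ and blue $H$-freeness gives $\Delta(B) \le h - 2$, so $D$ may be taken constant), we have $2 \Delta(G) \cdot \Delta(B[V']) < 2 \Delta D < n$ for $n$ large, and Sauer--Spencer packs $G$ with $B[V']$, embedding $G$ into $R[V']$.

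When $|S| \ge \sigma$, a double counting drives toward a contradiction. Setting $x_v = |N_B(v) \cap S|$,
\[
\sum_{T \in \binom{S}{\sigma}} \left|\bigcap_{u \in T} N_B(u)\right| = \sum_v \binom{x_v}{\sigma} \le (h - \sigma - 1) \binom{|S|}{\sigma}
\]
by $K_{\sigma, h - \sigma}$-freeness. Since $\sum_v x_v = \sum_{u \in S} d_B(u) \ge |S| D$, convexity of $\binom{\cdot}{\sigma}$ (via Jensen, valid when $|S|D/N \ge \sigma$) yields a lower bound of order $|S|^{\sigma} D^{\sigma}/(\sigma!\,N^{\sigma - 1})$, which strictly exceeds $(h - \sigma - 1)\binom{|S|}{\sigma}$ once $C_{H, \Delta}$ is chosen large. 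This forces a blue $K_{\sigma, h - \sigma} \supseteq H$, contradicting our assumption.

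The main obstacle is the intermediate regime $\sigma \le |S| = O(N^{1/\sigma})$, in which the Jensen bound is too weak to yield a contradiction. In this range $|S|$ is sublinear in $N$, but potentially growing with $N$, and the high-blue-degree vertices may have near-disjoint blue neighbourhoods. One exploits this structural consequence directly: each $v \in S$ then has red degree at least $N - 1 - D$, and so red neighbourhoods of distinct vertices of $S$ cover $V \setminus S$ with substantial overlap. The repair is to incorporate $S$ into the embedding --- assign $|S|$ carefully chosen vertices of $G$ as anchors to the vertices of $S$, route their $G$-neighbours into the corresponding red neighbourhoods within $V \setminus S$ (each still of size at least $N - 1 - D$), and embed the remaining $n - O(|S|)$ vertices of $G$ into $V \setminus S$ via a restricted Sauer--Spencer. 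Verifying that these three steps are mutually compatible is the technical heart of the proof; it is the only place where both the K\H{o}v\'ari--S\'os--Tur\'an count on $S$ and the bounded-degree hypothesis on $G$ are used in an essential way.
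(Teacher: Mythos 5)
Your first case ($|S|\le\sigma-1$) and the large-$|S|$ contradiction are fine, but you have correctly identified that the intermediate regime $\sigma\le|S|=O(N^{1/\sigma})$ is a genuine gap, and your sketch of a repair does not close it. Two concrete problems. First, the sentence ``each $v\in S$ then has red degree at least $N-1-D$'' is backwards: by definition of $S$, such a $v$ has blue degree \emph{at least} $D$, so its red degree is \emph{at most} $N-1-D$ --- and nothing in your setup prevents some $v\in S$ from having blue degree close to $N-1$, i.e.\ essentially no red neighbours, which makes it unusable as an anchor. Second, the K\H{o}v\'ari--S\'os--Tur\'an count alone cannot bound $|S|$ below $\sigma$ for the threshold $D=CN^{1-1/\sigma}$ (disjoint blue stars of size $D$ give $|S|=\Theta(N^{1/\sigma})$ with no blue $K_{\sigma,h-\sigma}$), so the contradiction route must fail for some colourings, and the ``route their $G$-neighbours into red neighbourhoods'' plan is a programme statement, not an argument.

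The paper's proof avoids the gap by using \emph{two} thresholds rather than one, matched to an embedding lemma (Lemma~\ref{GenEmbed}) that tolerates a sublinear set of bad vertices. Specifically: (i)~by K\H{o}v\'ari--S\'os--Tur\'an applied to $K_{|H|,|H|}$, the set $X$ of vertices with blue degree at least $2\eps n$ has $|X|\le\eps n$ --- this is allowed to be much larger than $\sigma-1$; (ii)~only the set $Y\subseteq X$ of vertices with blue degree exceeding $N-n/(2\sigma(H))$ is argued to have $|Y|\le\sigma(H)-1$, via the common-blue-neighbourhood count you also use; (iii)~after deleting only $Y$, every remaining vertex has red degree at least $n/(2\sigma(H))$ (linear), and all but the $\le\eps n$ vertices in $X\setminus Y$ have red degree at least $(1-2\eps)n$; Lemma~\ref{GenEmbed}, a Sauer--Spencer swap argument equipped with a greedy pre-embedding of the low-degree vertices and their $G$-neighbourhoods, then places $G$ directly. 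Your single threshold forces you either to discard $S$ (impossible when $|S|\ge\sigma$ since there is no slack) or to use vertices that may have almost no red neighbours; the paper's second threshold ensures that every vertex you keep has a linear red degree, and the embedding lemma does the rest. If you want to complete your write-up, the cleanest fix is to adopt the two-threshold structure and prove a Sauer--Spencer variant in the spirit of Lemma~\ref{GenEmbed} rather than trying to anchor on $S$ directly.
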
 
Theorem \ref{bipartite} is a consequence of the following result of Burr, Erd\H os, Faudree, Rousseau and Schelp \cite{Burretal}.
\begin{theorem}
 For each bipartite graph $H$, there exist $\epsilon>0$ and $n_0$ such that, if $n\geq n_0$ and $G$
 is an $n$-vertex connected graph with maximum degree at most $\epsilon n^{1/(\sigma(H)+2)}$, then 
 $G$ is $H$-good. 
\end{theorem}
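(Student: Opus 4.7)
The plan is to prove $R(G,H)\leq n+\sigma(H)-1$, which combined with Lemma~\ref{LowBd} gives that $G$ is $H$-good. Set $s=\sigma(H)$, $h=|H|$ and $N=n+s-1$, and consider any red/blue colouring of $E(K_N)$ containing no blue copy of $H$; the task is to find a red copy of $G$. Since $H$ is bipartite with smaller part of size $s$, it embeds into $K_{s,h-s}$, so the blue graph is $K_{s,h-s}$-free, and the K\H{o}v\'ari--S\'os--Tur\'an theorem yields both a global edge bound $e_{\mathrm{blue}}(K_N)\leq C_h N^{2-1/s}$ and the local statement that every $s$-set of vertices has at most $h-s-1$ common blue neighbours.

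A natural attempt is to embed $V(G)=\{v_1,\ldots,v_n\}$ greedily in red into $V(K_N)$ in breadth-first order, choosing $\phi(v_i)$ from the common red neighbourhood of its already-embedded neighbours. Since $|V(K_N)|-n=s-1$ is tiny compared with typical blue degrees, this naive greedy cannot succeed. Instead I would use dependent random choice: pick a random $t$-subset $T\subseteq V(K_N)$, with $t=t(h,\Delta)$, set $U_0=\bigcap_{v\in T}N_{\mathrm{red}}(v)$, and argue that with positive probability $|U_0|\geq n$ and every subset $P\subseteq U_0$ with $|P|\leq\Delta$ has at least $n$ common red neighbours in $V(K_N)$. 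A greedy embedding into $U_0$ then goes through: writing $P_i=N_G(v_i)\cap\{v_1,\ldots,v_{i-1}\}$ at step $i$, the image $\phi(P_i)$ is a $\leq\Delta$-subset of $U_0$ with $\geq n$ common red neighbours in $V(K_N)$, of which at most $i-1<n$ have been used, leaving a valid choice for $\phi(v_i)$.

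The principal obstacle is the quantitative calibration. To keep $\mathbb{E}|U_0|$ close to $N$ one wants $t$ bounded in terms of $h$, while overcoming a union bound over $\binom{N}{\Delta}$ potential bad $\Delta$-subsets demands $t$ large. A naive dependent-random-choice calculation only secures a common-red-neighbourhood threshold of about $N^{1-1/s}$, which is far below the required $n$. Bridging this gap requires invoking the \emph{local} K\H{o}v\'ari--S\'os--Tur\'an bound iteratively --- the fact that the common blue neighbourhood of any $s$-set has bounded size makes bad $\Delta$-subsets much rarer than the naive count suggests --- and carefully handling the small number of vertices of very high blue degree by allocating them to the $s-1$ slots left outside $U_0$. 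The delicate balance between these effects is what should produce the exponent $1/(\sigma(H)+2)$, with the ``$+2$'' reflecting the overheads of both the union bound and the gap between the achievable threshold and $n$; this is where I expect the bulk of the proof's difficulty to lie.
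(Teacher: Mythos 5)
The statement you are trying to prove is not actually proved in this paper. It is attributed to Burr, Erd\H{o}s, Faudree, Rousseau and Schelp~\cite{Burretal}; what the paper presents (in Section~\ref{NExpFix}) is a short proof only of the weaker Theorem~\ref{bipartite}, where the degree bound $\Delta$ is a constant rather than a growing power of $n$.

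With that said, your proposal contains a fatal structural flaw that goes beyond the ``quantitative calibration'' you flag. You want to find $U_0$ of size $\geq n$ in $V(K_N)$, $N=n+s-1$, so that \emph{every} subset $P\subseteq U_0$ with $|P|\leq\Delta$ has at least $n$ common red neighbours in $V(K_N)$. Applying this to singletons $P=\{v\}$, each $v\in U_0$ must have red degree $\geq n$, i.e.\ blue degree at most $N-1-n=s-2$. But a $K_{s,h-s}$-free blue graph on $N$ vertices can be nearly regular with blue degree as large as $\Theta(N^{1-1/s})$ (the incidence graph of a projective plane is $K_{2,2}$-free with degree $\Theta(N^{1/2})$, and analogous near-regular extremal $K_{s,t}$-free graphs exist for larger $s$). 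For such a colouring, \emph{no} vertex has blue degree $\leq s-2$, so $U_0=\emptyset$ and the whole scheme collapses before the union bound or the choice of $t$ ever becomes relevant. What the greedy step actually needs at stage $i$ is $|N_{\mathrm{red}}(\phi(P_i))|\geq i$, not $\geq n$ uniformly; aiming for the uniform bound is what makes the demand impossible. Beyond this, you yourself concede the core quantitative step (``the delicate balance \dots\ is where I expect the bulk of the proof's difficulty to lie''), which is precisely the part that a proof would need to supply.

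For contrast, the paper's proof of the weaker Theorem~\ref{bipartite} avoids your obstruction entirely. It first uses the $K_{\sigma(H),|H|-\sigma(H)}$-free property to show there are fewer than $\sigma(H)$ vertices of very high blue degree, removes those, and then observes that the remaining red graph has minimum degree $\geq n/(2\sigma(H))$ with all but $\leq\eps n$ vertices having red degree $\geq(1-2\eps)n$. It then invokes Lemma~\ref{GenEmbed}, a Sauer--Spencer-type swapping argument, rather than a greedy BFS embedding; this only needs global degree information and is insensitive to the blue graph being nearly regular. That approach is restricted to constant $\Delta$ (which is why the stronger statement is cited rather than reproved), but it illustrates what a working argument must do differently: control degrees rather than common neighbourhoods of $\Delta$-sets, and use an embedding step that is not a one-shot greedy.
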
 
For convenience, we present a short proof of Theorem \ref{bipartite} later.
\bigskip

 We note that there is a natural extension of the notion of 
 $H$-goodness to the multicolour setting. For graphs $H_1, \dots, H_r$, 
 we say that a connected graph $G$ is \textit{$(H_1,\ldots,H_r)$-good} 
 when there are integers $W$ and $Z$ (depending on $(H_1,\ldots,H_r)$ but
 not on $G$) such that $R(G,H_1,\ldots,H_r)=W(|G|-1)+Z$.  We say that a graph
 class $\mathcal{G}$ is \textit{multicolour-always-good} when, for every 
 $r\geq 2$ and every collection of graphs
 $H_1,\ldots,H_r$, $G$ is $(H_1,\ldots,H_r)$-good for all sufficiently 
 large $G\in\mathcal{G}$. In Section~\ref{Mcol}, we discuss the 
 problems of finding $W$ and $Z$, and prove the following theorem.

\begin{theorem}\label{MAGisAG} 
 If $\mathcal{G}$ is any always-good class of graphs, then 
 $\mathcal{G}$ is multicolour-always-good.
\end{theorem}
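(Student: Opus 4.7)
My plan is to reduce the multicolour Ramsey problem for $(G, H_1, \ldots, H_r)$ to a two-colour Ramsey problem via a single auxiliary graph $\hat H$, chosen extremally, and then to apply the always-goodness of $\mathcal G$. Call a graph $\hat H$ \emph{admissible} if $\hat H \to (H_1,\ldots,H_r)$, meaning that every $r$-edge-colouring of $\hat H$ contains a copy of $H_i$ in colour $i$ for some $i$. The admissible set is non-empty since $K_{R(H_1,\ldots,H_r)}$ belongs to it, so fix an admissible $\hat H^{*}$ minimising the pair $(\chi(\hat H^{*}),\sigma(\hat H^{*}))$ lexicographically. For the upper bound, take an $(r+1)$-edge-colouring of $K_N$ with $N=R(G,\hat H^{*})$ in colours $c_0,c_1,\ldots,c_r$, and merge $c_1,\ldots,c_r$ into a single ``blue'' colour: the definition of $R(G,\hat H^{*})$ forces either a $c_0$-copy of $G$ or a blue $\hat H^{*}$, and in the latter case the edges of this $\hat H^{*}$ carry an $r$-colouring which by the arrow property contains $H_i$ in $c_i$ for some $i$. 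Hence $R(G,H_1,\ldots,H_r) \le R(G,\hat H^{*})$.

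For the lower bound, I would use the standard Burr construction witnessing $R(G,\hat H^{*}) > (\chi(\hat H^{*})-1)(|G|-1)+\sigma(\hat H^{*})-1$: partition the vertex set of $K_{N-1}$ into $\chi(\hat H^{*})-1$ ``$c_0$''-cliques of size $|G|-1$ together with one of size $\sigma(\hat H^{*})-1$, and let $F$ be the complete multipartite subgraph of cross-edges. One checks that either $\chi(F)=\chi(\hat H^{*})$ and $\sigma(F)=\sigma(\hat H^{*})-1$, or (when $\sigma(\hat H^{*})=1$) $\chi(F)=\chi(\hat H^{*})-1$, so in every case the pair $(\chi(F),\sigma(F))$ is strictly smaller than $(\chi(\hat H^{*}),\sigma(\hat H^{*}))$ in lexicographic order. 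If $F$ were itself admissible, this would contradict the minimality of $\hat H^{*}$; hence $F\not\to(H_1,\ldots,H_r)$, and $E(F)$ admits an $r$-colouring with no $H_i$ in $c_i$. Combining this $r$-colouring of the cross-edges with the ``$c_0$''-edges inside the Burr cliques gives an $(r+1)$-colouring of $K_{N-1}$ that avoids a $c_0$-copy of $G$ and every $H_i$ in $c_i$, proving $R(G,H_1,\ldots,H_r)\ge N$.

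Together the two bounds give $R(G,H_1,\ldots,H_r)=R(G,\hat H^{*})$, and always-goodness of $\mathcal G$ applied to the fixed graph $\hat H^{*}$ yields $R(G,\hat H^{*})=(\chi(\hat H^{*})-1)(|G|-1)+\sigma(\hat H^{*})$ for all sufficiently large $G\in\mathcal G$. Taking $W=\chi(\hat H^{*})-1$ and $Z=\sigma(\hat H^{*})$, both depending only on $H_1,\ldots,H_r$, completes the proof. The one substantive observation is that the multipartite ``obstruction'' $F$ cannot itself satisfy the Ramsey arrow; this is forced by the extremal choice of $\hat H^{*}$, while the rest is just bookkeeping between the standard upper and lower bounds for two-colour Ramsey numbers.
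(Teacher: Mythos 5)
Your proof is correct, and it takes a genuinely different route from the paper. The paper defines $W$ explicitly via the ``homomorphism Ramsey number'' $R_{\rm hom}(H_1,\ldots,H_r)$ and $Z$ via a blow-up condition, and then establishes the upper bound by applying always-goodness to the specific graph $K_{\ell,\ldots,\ell,Z}$, verifying the arrow property $K_{\ell,\ldots,\ell,Z}\to(H_1,\ldots,H_r)$ through an iterated K\"ov\'ari--S\'os--Tur\'an pruning procedure (which is where the tower-type bound on $\ell$ arises). You instead let $\hat{H}^*$ be an arrow-admissible graph minimising $(\chi,\sigma)$ lexicographically; then the upper bound $R(G,H_1,\ldots,H_r)\le R(G,\hat{H}^*)$ is immediate by merging colours, and the lower bound follows because the Burr cross-edge graph $F$ has a strictly smaller $(\chi,\sigma)$ pair and hence, by minimality, admits an $r$-colouring with no $H_i$ in colour~$i$. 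This cleanly circumvents the KST procedure altogether. The trade-off is that your $W=\chi(\hat{H}^*)-1$ and $Z=\sigma(\hat{H}^*)$ are defined non-constructively; they do in fact coincide with the paper's explicit values (one can check this by combining the two arguments), and the paper needs those explicit formulas for the later corollary computing $R(C_n,C_{\ell_1},\ldots,C_{\ell_r})$, but for the theorem as stated your argument suffices. Two small points worth making explicit when writing this up: the minimum exists because lexicographic order on $\mathbb{N}^2$ is a well-order and the admissible set is non-empty; and the case analysis on $\sigma(\hat{H}^*)$ requires $|G|-1>\sigma(\hat{H}^*)-1$, which is fine since $G$ is taken sufficiently large.
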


\medskip

We now turn our attention to the case where $G=H$, and $H$ is again of bounded maximum degree.
Burr~\cite{BurrConj} conjectured that, for each fixed $\Delta$, if $H$ is a sufficiently large connected
graph with maximum degree at most $\Delta$, then $H$ is itself $H$-good, i.e.,
$$
R(H,H) = (\chi(H)-1)(|H|-1) + \sigma(H). 
$$
In his paper, Burr warns that this conjecture ``may be too bold'', and indeed so it proved.

Burr's conjecture would imply that, for each fixed $\Delta$, $R(H,H) \le \Delta |H|$, whenever $H$ is a
sufficiently large graph with maximum degree $\Delta$.  Chv\'atal, R\"odl, Szemer\'edi and Trotter~\cite{CRST}
proved that some result along these lines is true: for every $\Delta$, there is some constant $r$ such that,
whenever $H$ has maximum degree $\Delta$, $R(H,H) \le r |H|$.

For each fixed $\Delta$, let $r_\Delta = \displaystyle\liminf_{n\to \infty} \max\{ R(H,H)/n : H$ is a connected
graph on $n$ vertices with maximum degree at most $\Delta \}$.  So the result of Chv\'atal, R\"odl,
Szemer\'edi and Trotter is that $r_\Delta$ is finite for all $\Delta$, and Burr's conjecture would imply
that $r_\Delta \le \Delta$.

The question of determining the rate of growth of $r_\Delta$ was addressed by
Graham, R\"odl and Ruci\'nski~\cite{GRRLinRam}, who proved the following theorem, giving bounds in
both directions.

\begin{theorem} [Graham, R\"odl and Ruci\'nski~\cite{GRRLinRam}]  \label{GRR}
There exist constants $c,c'>0$ such that the following hold.
\begin{itemize}
\item [(i)] Whenever $H$ is an $n$-vertex graph with $\Delta(H)\leq\Delta$, $R(H,H)\leq 2^{c'\Delta\log^2\Delta}n$.
\item [(ii)] For each sufficiently large $n$, there exists a bipartite $n$-vertex graph $H$ with $\Delta(H)\leq\Delta$ and
$R(H,H)> 2^{c\Delta}n$.
\end{itemize}
\end{theorem}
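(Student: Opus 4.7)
For part (i), my plan is the classical regularity-method framework of Chv\'atal, R\"odl, Szemer\'edi and Trotter, made quantitative. Given a 2-edge-colouring of $K_N$, apply Szemer\'edi's Regularity Lemma with parameter $\eps=1/\mathrm{poly}(\Delta)$, chosen precisely so that the Blow-up Lemma (in the form of Koml\'os--S\'ark\"ozy--Szemer\'edi) succeeds at density at least $1/4$ for guest graphs of maximum degree $\Delta$. This yields a partition of $V(K_N)$ into $M$ clusters. Form the 2-coloured reduced graph by placing a red (resp.\ blue) edge between each $\eps$-regular pair of clusters whose majority colour is red (resp.\ blue) of density at least $1/2$. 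Since $\Delta(H)\le\Delta$ gives $\chi(H)\le\Delta+1$, it suffices to find in the reduced graph a monochromatic $K_{\Delta+1}$ all of whose pairs are $\eps$-regular of density at least $1/4$; this is guaranteed when $M\gg R(K_{\Delta+1},K_{\Delta+1})$. The Blow-up Lemma then embeds $H$ into the corresponding blow-up provided each cluster has size at least $n/(\Delta+1)$. Plugging in $\eps$ polynomial in $1/\Delta$ and keeping $N/M \ge n/(\Delta+1)$, careful accounting delivers $N=2^{O(\Delta\log^2\Delta)}n$, with one $\log\Delta$ factor from the Ramsey step and one from the Blow-up-compatible choice of $\eps$.

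For part (ii), the plan is to construct a bipartite $\Delta$-regular graph $H$ on $n$ vertices and a 2-colouring of $K_N$ with $N=2^{c\Delta}n$ admitting no monochromatic copy of $H$. Take $H$ to be a uniformly random $\Delta$-regular bipartite graph on vertex classes of size $n/2$; with high probability $H$ is a strong vertex expander (every $S\subseteq V(H)$ with $|S|\le n/4$ satisfies $|\Gamma_H(S)\setminus S|\ge(1-o(1))\Delta|S|$) and has logarithmic girth. A purely random 2-colouring of $K_N$ only yields $R(H,H)>2^{\Delta/2}$ via the first-moment method, which is too weak; instead I would use a structured colouring---partition $V(K_N)$ into blocks of size roughly $n$, colour edges within blocks one colour and across blocks the other, then apply a random perturbation to destroy the complete multipartite structure of the second colour class. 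The key analytic step is to show that expansion of $H$ forces any hypothetical monochromatic embedding to spread across many blocks in a way the colouring cannot support, while within a single block the colouring is too small to contain $H$. Calibrating block-size and perturbation parameters against $\Delta$ and $n$ yields $N=2^{c\Delta}n$ for some absolute constant $c>0$.

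The main obstacle in (i) is the quantitative bookkeeping: the $\log^2\Delta$ exponent is the delicate output of balancing the regularity parameter $\eps=\eps(\Delta)$ against the Blow-up Lemma's density requirement and the Ramsey bound in the reduced graph, and keeping $\eps$ polynomial in $1/\Delta$ (rather than exponentially small) is the crucial technical constraint. The main obstacle in (ii) is the joint design of $H$ and the colouring: neither random $H$ alone nor random colourings alone suffice, since first-moment and Lov\'asz Local Lemma arguments both stop at $R(H,H)>2^{\Delta/2}$. One must instead engineer a structured colouring whose monochromatic subgraphs are forced to have restricted structure (small components or bounded expansion) together with a sharp expansion bound on $H$ that precludes any embedding into such structures; making these two ingredients compatible is the crux.
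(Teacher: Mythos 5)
The paper does not prove Theorem~\ref{GRR}; it cites it directly from Graham, R\"odl and Ruci\'nski, so your sketch must be judged against the actual argument in that reference.

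Your plan for part~(i) cannot deliver the stated bound. The number of clusters $M$ produced by Szemer\'edi's Regularity Lemma is a tower-type function of $1/\eps$. With $\eps$ chosen polynomial in $1/\Delta$ (as the Blow-up Lemma requires) and the reduced graph required to have $M\gg R(K_{\Delta+1},K_{\Delta+1})$ nodes, the constraint $N/M\ge n/(\Delta+1)$ forces $N/n$ to be at least a tower of height $\mathrm{poly}(\Delta)$, which is astronomically larger than $2^{c'\Delta\log^2\Delta}$. This is structural, not a bookkeeping issue: what you have described is precisely the Chv\'atal--R\"odl--Szemer\'edi--Trotter argument, and it is \emph{exactly} because that argument yields tower-type constants that Graham, R\"odl and Ruci\'nski's theorem is a genuine advance. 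Their proof abandons the Regularity Lemma altogether and instead iteratively extracts large ``bi-dense'' bipartite pieces by a direct counting argument (in the spirit of what is now called dependent random choice), then embeds $H$ greedily into the resulting low-complexity structure. The remark that ``keeping $\eps$ polynomial in $1/\Delta$ is the crucial technical constraint'' has things backwards: the obstruction is $M$, not~$\eps$, and no tuning of $\eps$ within the regularity-plus-blow-up framework escapes the tower.

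Your plan for part~(ii) correctly identifies the driver---strong expansion of a random $\Delta$-regular bipartite $H$---and correctly notes that a naive first-moment bound over random colourings is too weak when $N$ grows linearly with~$n$. But the ``blocks plus random perturbation'' colouring is offered without any calculation, and it is not clear it has the property you need: a perturbed complete multipartite colouring is exactly the sort of thing expanders tend to embed into rather than be blocked by, unless the perturbation is carefully calibrated. The proof in the reference must balance two facts simultaneously---$H$ is too dense and too rapidly expanding to fit in the structures that arise---and you gesture at both without assembling them into an argument. As a sketch of the lower bound this is on the right track conceptually, but it is not a proof outline one could flesh out with confidence.
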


Theorem~\ref{GRR} implies that $2^{c\Delta} \le r_\Delta \le 2^{c'\Delta \log^2\Delta}$, and in particular
that Burr's conjecture is false.  The proof of the lower bound in Theorem~\ref{GRR} relies upon a
(probabilistic) construction of a graph $H$ with maximum degree $\Delta$ and good expansion properties.

Recently, Fox and Sudakov~\cite{FoxSud} established the alternative upper bound $R(H,H) \le 2^{c\Delta(H) \chi(H)}|H|$, for some 
explicit constant $c$. In particular, if $H$ is bipartite, this matches the form of the lower 
bound in Theorem~\ref{GRR}. The result for bipartite graphs was obtained independently
by Conlon \cite{Conlon}, and very recently Conlon, Fox and
Sudakov~\cite{ConFoxSud}, improving on Theorem~\ref{GRR}, showed that there is a
constant $c''$ such that $r_\Delta\le 2^{c''\Delta\log\Delta}$. 

We show that the use of expansion in the lower bound is necessary -- that is,
when both maximum degree and expansion are restricted, the Ramsey number may be
bounded above by a function linear in both $n$ and $\Delta$.  In fact, we will
prove something slightly stronger: when expansion is appropriately restricted,
the Ramsey number is primarily controlled by the chromatic number of $H$, not
the maximum degree, as in Burr's conjecture.

Observe that simply requiring $H$ to fail some global expansion condition will
not suffice to bound $R(H,H)$ below $2^{c\Delta}n$. To see this, take some
large $\Delta$ and $n$, let $H'$ be an $(n/10)$-vertex graph with
$\Delta(H')\leq \Delta$ and $R(H',H')> 2^{c\Delta}n/10$, and form $H$ by
adding $9n/10$ isolated vertices to $H'$.  The new graph $H$ is a poor
expander, yet $R(H,H) > 2^{c\Delta-4}n$. It follows that, as before, we need
to restrict the expansion of all large subgraphs of $H$, or the bandwidth of $H$.

We shall show that, if the degree of $H$ is at most $\Delta$, $H$ is sufficiently large, and the
bandwidth of $H$ is at most $\beta |H|$ for some small constant $\beta$, then $R(H,H) \le (2\chi(H) + 4)|H|$.
Thus imposing a restriction on the bandwidth of $H$ almost rescues Burr's conjecture.

Our first task in this direction is to investigate the Ramsey numbers of powers of paths.

In Section~\ref{SecPnkCnk}, we consider the Ramsey numbers $R(P_n^k,P_n^k)$ and $R(C_n^k,C_n^k)$.
Gerencs\'er and Gy\'arf\'as~\cite{GerGy} showed that $R(P_n,P_n)=n-1+\sigma(P_n)$, and (for $n\geq 5$)
Faudree and Schelp~\cite{FauSch} and Rosta~\cite{Rosta} showed that $R(C_n,C_n)=(\chi(C_n)-1)(n-1)+\sigma(C_n)$,
matching the lower bounds in Lemma~\ref{LowBd}, and in Burr's conjecture.
It is natural to ask whether this continues to hold (for sufficiently large $n$) for each $k$: for powers of
paths, this would mean that $R(P_n^k,P_n^k) = (k + \frac{1}{k+1})n + O(1)$.

In Section~\ref{SecPnkCnk}, we give a construction showing that this is not the case.  For
convenience, we state the result when $n$ is a multiple of $k+1$.

\begin{theorem}\label{LowerBound} For $k \ge 2$, and $n$ a multiple of $k+1$, we have
$$
R(C^k_n,C^k_n),\,R(P^k_n,P^k_n)\geq (k+1)n-2k.
$$
\end{theorem}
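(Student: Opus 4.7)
Since $P_n^k$ is a spanning subgraph of $C_n^k$, any $2$-colouring of $K_N$ avoiding a monochromatic $P_n^k$ also avoids a monochromatic $C_n^k$; it is therefore enough to exhibit a red/blue colouring of $K_N$, with $N=(k+1)n-2k-1=(k+1)(n-1)-k$, containing no monochromatic $P_n^k$. Throughout, set $m=n/(k+1)$; recall that $\chi(P_n^k)=k+1$, that every proper $(k+1)$-colouring of $P_n^k$ is obtained from the canonical one $c(v_i)=i\bmod(k+1)$ by permuting colours (so every class has size exactly $m$), that the clique number is $k+1$, the independence number is $m$, and $P_n^k$ is $k$-connected (hence $2$-connected, using $k\ge2$).

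The plan is to start from the natural ``disjoint clique'' lower-bound construction and enrich it. A quick optimisation shows that if the red graph is a disjoint union of cliques of sizes $a_1,\ldots,a_t$ avoiding red $P_n^k$ and whose blue complete multipartite complement avoids blue $P_n^k$, then $a_i\le n-1$ and $\sum_i\min(a_i,m)<n$, forcing $N\le k(n-1)+(m-1)$, which falls strictly short of the target when $n\ge(k+1)^2/k$. Hence the red graph must be more than a disjoint union of cliques. The identity $N=(k+1)(n-1)-k$ suggests taking $k+1$ red cliques of size $n-1$ whose pairwise intersections account for exactly $k$ vertices: a natural choice is the \emph{sunflower} $V=\{u\}\cup A_1\cup\cdots\cup A_{k+1}$ with $|A_i|=n-2$ and each $V_i=\{u\}\cup A_i$ a red clique. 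On the red side this already works: because $u$ is a cut vertex of the red graph and $P_n^k$ is $2$-connected, any red $P_n^k$ must lie in one $V_i$, contradicting $|V_i|=n-1<n$. On the blue side, however, the induced graph on $V\setminus\{u\}$ is the complete $(k+1)$-partite graph $K_{n-2,\ldots,n-2}$, which contains $P_n^k$ via the canonical colouring; the sunflower alone therefore fails.

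To repair the blue side I augment the sunflower by adding a carefully chosen set of red edges between the petals $A_i$, designed with three requirements: (i) no new red clique of size $n$ is created, (ii) no red $P_n^k$ spans more than one $V_i$ (equivalently, every red triangle still lies inside some $V_i$), and (iii) the resulting blue graph admits no partition of $V(P_n^k)$ into independent sets whose $i$-th piece fits inside the $i$-th blue ``part''. Concretely, my plan is to designate one petal (say $A_1$) and attach it to the rest by a matching-type red structure together with a small extra clique at $u$, forcing any prospective blue copy of $P_n^k$ to place a size-$m$ independent class into a set that is provably too small; combined with the fact that every proper $(k+1)$-colouring of $P_n^k$ has every class of size exactly $m$, and that using $t\ge k+2$ colours forces at least $k+1$ classes of size close to $m$ (by the $K_{k+1}$-subgraphs along $P_n^k$), this blocks every blue embedding.

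The main obstacle is the blue-side verification: because $P_n^k$ admits many proper colourings, showing $\sum_i\min(a_i,m)<n$ for the blue multipartite part sizes (even after augmentation) is not automatic, and one must also exclude embeddings that use more than $k+1$ colour classes or that exploit the augmentation edges as blue non-edges. I would prove this by (a) using the rigid structure of the maximum independent sets in $P_n^k$ (arithmetic progressions with common difference $k+1$) to limit which vertices of $V$ can serve as ``class $i$'', and (b) running a short case analysis indexed by which petal $A_i$ receives the augmented restriction. The $k\ge2$ hypothesis is essential both for the red-side $2$-connectivity argument and for the blue-side pigeonhole on $K_{k+1}$-windows; the parallel bound for $C_n^k$ follows for free from the $P_n^k$ construction as observed at the outset.
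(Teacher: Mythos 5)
Your proposal correctly reduces to $P_n^k$ and correctly identifies the structural facts about $P_n^k$ (unique proper $(k+1)$-colouring up to permutation, $k$-connectivity, independence number $m=n/(k+1)$), but it does not actually produce a valid colouring of $K_{(k+1)n-2k-1}$: it exhibits a construction (the red sunflower) that you yourself show fails on the blue side, and then sketches a repair (``a matching-type red structure together with a small extra clique at $u$'') without pinning down the edges or verifying either side. This is a genuine gap, not a cosmetic omission. In particular, two of the points you rely on are shaky once edges are added between petals: (i) the red-side argument via the cut vertex $u$ evaporates as soon as there are red edges joining two petals, and (ii) your claimed equivalence ``no red $P_n^k$ spans more than one $V_i$ $\Leftrightarrow$ every red triangle lies inside some $V_i$'' is not an equivalence --- containing no cross-petal red triangle does not by itself prevent a red $k$th power of a path that uses many vertices from two petals. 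On the blue side, the sunflower's complement restricted to $V\setminus\{u\}$ is the full $K_{n-2,\ldots,n-2}$, which contains a blue $P_n^k$ with enormous slack ($n-2$ versus $m\approx n/(k+1)$ per class); destroying \emph{all} such embeddings by adding a sparse red bipartite pattern, while simultaneously not creating any red $P_n^k$, is exactly the hard part, and nothing in the proposal shows it can be done.

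The paper's construction is quite different and resolves the tension by being red/blue symmetric rather than patching an all-red clique structure. It partitions the $N=(k+1)n-2k-1$ vertices into $k$ sets $A_1,\ldots,A_k$ of size $kt-1$, $k$ sets $B_1,\ldots,B_k$ of size $2t-1$, and a set $C$ of size $t-1$ (with $t=n/(k+1)$); colours $A_i$'s internally red and $B_i$'s internally blue; makes $A_i$--$A_j$ blue, $B_i$--$B_j$ red, $A_i$--$B_i$ red, $A_i$--$B_j$ ($i\neq j$) blue, and joins $C$ in blue to every $A_i$ and in red to every $B_i$. The point is that a monochromatic $P_n^k$ touching any $A_i$ (red) or $B_i$ (blue) must be trapped in a set of the wrong size, while a copy avoiding all of them would need a colour class inside $C$, which is too small. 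If you want to salvage your sunflower idea you would have to fully specify the augmentation and carry out both verifications; but it is cleaner to adopt the paper's symmetric partition, which avoids the repair step entirely.
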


This shows in particular that even bounding the bandwidth of $H$ by a constant does not suffice to rescue
Burr's conjecture.

We suspect that the inequality above is tight, at least for powers of paths.  We have not been
able to show this, but we offer the following upper bounds, which differ from the lower bounds
by a multiplicative factor slightly greater than~2.

\begin{theorem}\label{RPnkCnk} 
 For any $k\geq 2$, we have
 \begin{align*}
  R(P_n^k,P_n^k) &\leq
   \left(2k+2+\frac{2}{k+1}\right)n+o(n)\,, \\
  \intertext{and}
  R(C_n^k,C_n^k) &\leq
   \left(2\chi(C_n^k)+\frac{2}{\chi(C_n^k)}\right)n+o(n)\,.
 \end{align*}
\end{theorem}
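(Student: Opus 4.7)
The plan is to follow the standard regularity-based strategy for Ramsey problems, specialised to exploit the $(k+1)$-partite structure of powers of paths and cycles. Let $N=(2(k+1)+2/(k+1))n+o(n)$ and consider any $2$-edge-colouring of $K_N$. First I would apply the Szemer\'edi Regularity Lemma to the red subgraph, obtaining an $\eps$-regular equipartition $V_1,\ldots,V_t$ with $m=|V_i|\approx N/t$, and then define the $2$-edge-coloured reduced graph $R^*$ on $[t]$ by colouring $ij$ red precisely when $(V_i,V_j)$ is $\eps$-regular with red density at least $1/2$ and blue otherwise, discarding the $O(\eps t^2)$ irregular pairs.

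The heart of the proof is a Ramsey-type statement on $R^*$: \textit{in any $2$-edge-colouring of $K_t$ there are a colour and a connected subgraph $F$ in that colour containing $s\ge t/(2(k+1)^2+2)$ vertex-disjoint copies of $K_{k+1}$.} The constant $2(k+1)^2+2$ is tuned so that the $s(k+1)\ge t/(2(k+1)+2/(k+1))$ clusters covered by the matching, once blown up by a factor of $m$, absorb exactly the desired $n$ vertices. I would establish this lemma by combining Gy\'arf\'as's observation that one colour of any $2$-edge-coloured $K_t$ contains a spanning connected subgraph with a Tur\'an/greedy extraction that produces many vertex-disjoint $K_{k+1}$'s inside the dominant-colour connected component; pigeonholing forces enough of them to lie in a single colour, and a short auxiliary argument keeps them inside a single component.

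Given this monochromatic connected $K_{k+1}$-matching in $R^*$, I would clean the clusters it uses to super-regular density in the appropriate colour and invoke the Koml\'os--S\'ark\"ozy--Szemer\'edi Blow-up Lemma. Each blown-up $K_{k+1}$ hosts a block of $P_n^k$ of size roughly $(k+1)m$, and the short paths in $F$ joining successive cliques of the matching are used to stitch consecutive blocks together with only $o(n)$ total vertex loss; the resulting monochromatic copy has at least $s(k+1)m-o(n)\ge N/(2(k+1)+2/(k+1))-o(n)\ge n$ vertices, as required. For $C_n^k$ one substitutes $K_{\chi(C_n^k)}$ throughout and strengthens the key lemma so that $F$ contains a closed $K_{\chi(C_n^k)}$-walk through its matching, the $O(1)$ extra cliques being well within the $o(n)$ slack.

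The principal obstacle is the Ramsey-type lemma on $R^*$. A naive use of $R(K_{k+1},K_{k+1})$ yields a matching fraction of order only $2^{-k}$, exponentially worse than the required $1/(2(k+1)^2+2)$. Bringing this denominator down to a polynomial in $k$ uniformly across all $2$-colourings demands an argument tailored to two colours---closer in spirit to Figaj--\L uczak's treatment of the path case $k=1$ than to a generic Ramsey estimate---while simultaneously tracking the connectivity and tiling structure of $F$ is the most delicate combinatorial step.
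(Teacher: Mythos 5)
Your proposal takes a genuinely different route from the paper, but it has two gaps that I do not see how to close as written.

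First, the key Ramsey-type lemma on $R^*$---that every $2$-colouring of $K_t$ contains a monochromatic \emph{connected} subgraph $F$ with at least $t/(2(k+1)^2+2)$ vertex-disjoint copies of $K_{k+1}$---is stated but not proved. The part you worry about (exponential loss from $R(K_{k+1},K_{k+1})$) is actually not the problem: greedy extraction gives $(t-R(K_{k+1},K_{k+1}))/(k+1)$ disjoint monochromatic $K_{k+1}$'s, and since $t$ is a Regularity-Lemma quantity that may be taken as large as we please (independent of $n$), the term $R(K_{k+1},K_{k+1})$ is absorbed into the $o(n)$; pigeonholing then gives $\ge t/(2(k+1))(1-o(1))$ in one colour, comfortably better than required. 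What is not routine is forcing all of these $K_{k+1}$'s into a \emph{single} monochromatic component: the one colour that is connected (Gy\'arf\'as's observation) need not be the one that carries the $K_{k+1}$'s, and extracting a connected $K_{k+1}$-tiling of near-optimal density in a $2$-coloured $K_t$ is exactly the hard combinatorial statement you would be re-proving, analogous to the connected-matching lemma of Figaj--\L uczak but for $K_{k+1}$ rather than $K_2$. You acknowledge this is the delicate step, but a delicate step with no argument is a gap.

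Second, and more fundamentally, the stitching step does not work as described. You propose to embed blocks of $P_n^k$ into blown-up copies of $K_{k+1}$ and join consecutive blocks along ``short paths in $F$.'' But $P_n^k$ is $(k+1)$-chromatic; a path in $R^*$ consists of single regular pairs, which only support bipartite patterns. For $k=1$ (where $P_n$ is bipartite) a connecting path in $R^*$ suffices, which is why the connected-matching method works for ordinary paths and cycles; for $k\ge 2$ a segment of $P_n^k$ crossing from one cluster to the next needs $k+1$ consecutive vertices to form a clique, and a bare path in the reduced graph provides no such structure. To make this work you would need the connecting structure in $R^*$ to itself be (roughly) a $k$th power of a path, at which point you are no longer saving anything by looking for a $K_{k+1}$-matching.

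For comparison, the paper avoids both issues by a different decomposition. Instead of the Regularity Lemma it applies Ramsey's theorem to partition $V(K_N)$ into monochromatic $s$-cliques (plus $O(1)$ leftover), keeps only the majority colour---say the red cliques $\mathcal R$, which is where the factor of~$2$ is lost---and builds the auxiliary $2$-coloured graph $G^*$ on $\mathcal R$ with $R_iR_j$ red when $G[R_i,R_j]$ contains a red $K_{4k,4k}$. Because each $R_i$ is internally red-complete, a red $P_t$ in $G^*$ immediately threads a red $P_n^k$ through $G$; if there is no red $P_t$, the paper's asymmetric bound $R(P_t,P_t^k)\le (k+1+\tfrac{1}{k+1})t+o(t)$ (Lemma~\ref{RamseyPkP}, proved via Erd\H os--Gallai, maximum disjoint cycles, and the custom blow-up Lemma~\ref{MyBlowUp}) produces a blue $P_t^k$ in $G^*$, which Lemma~\ref{MyBlowUp} converts into a blue $P_n^k$. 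The asymmetric lemma is exactly the tool that replaces your unproved tiling lemma, and threading through cliques replaces your problematic stitching step. I would suggest either proving a precise connected-$K_{k+1}$-tiling lemma with a genuine argument and repairing the stitching by asking for a $P_t^k$-like connecting structure, or switching to the paper's clique-decomposition strategy.
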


Using Theorem~\ref{RPnkCnk}, together with the embedding method of B\"ottcher, Schacht and Taraz~\cite{BST},
we prove the following result.

\begin{theorem}~\label{BBwRam} Given $\Delta\ge 1$, there exist $n_0$ and $\beta_1$ such that, whenever
$n\geq n_0$ and $H$ is an $n$-vertex graph with maximum degree at most $\Delta$ and
$\bw(H)\leq \beta_1 n$, we have $R(H,H)\leq (2\chi(H)+4)n$.
\end{theorem}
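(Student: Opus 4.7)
The plan is to combine Szemer\'edi's Regularity Lemma with Theorem~\ref{RPnkCnk} and the bandwidth-embedding technique of B\"ottcher, Schacht and Taraz~\cite{BST}. Set $\chi := \chi(H)$, $k := \chi - 1$, and $N := (2\chi + 4)n$, and fix an arbitrary red/blue colouring of the edges of $K_N$; the aim is to find a monochromatic copy of $H$.

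Apply the 2-colour Regularity Lemma with a parameter $\eps$ chosen much smaller than both $\beta_1$ and the $\gamma$ required below, obtaining an equitable partition of $V(K_N)$ into $m$ clusters of common size $L = \lfloor N/m \rfloor$, with $m$ bounded above by a constant $M_0(\eps)$. Build a reduced graph $R^*$ on $[m]$ whose edge $ij$ is coloured red whenever $(V_i, V_j)$ is $\eps$-regular in both colours with red density at least $\tfrac12$, and coloured blue otherwise (subject to regularity). A standard cleanup step, discarding the $O(\sqrt\eps)$-fraction of clusters that participate in too many non-regular pairs, leaves a complete $2$-edge-coloured graph $R^*$ on $m^* \ge (1 - \sqrt\eps)\,m$ vertices.

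Next, choose $t$ to be the largest multiple of $k+1$ with $(2\chi + 2/\chi + \eps)\,t \le m^*$; this is possible once $\eps$ is small enough and $m^*$ is large. By Theorem~\ref{RPnkCnk}, the reduced graph $R^*$ contains a monochromatic copy of $C_t^k$, which we may take to be red. Since $\chi \ge 2$ gives $2\chi + 2/\chi < 2\chi + 4$, the total size of the corresponding cluster family satisfies
$$
tL \;\ge\; \frac{m^*}{2\chi + 2/\chi + \eps}\cdot L \;\ge\; \frac{(1-o(1))(2\chi + 4)}{2\chi + 2/\chi + \eps}\,n \;\ge\; (1 + \gamma)\,n
$$
for some $\gamma > 0$, provided $\eps$ is small enough. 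We obtain clusters $U_1, \dots, U_t$ of common size $L$ such that every pair $(U_i, U_j)$ with $|i - j| \le k \pmod t$ is $\eps$-regular and has red density at least $\tfrac12$; the blow-up of $C_t^k$ along these clusters will serve as our red host.

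Finally, invoke the bandwidth-embedding lemma of~\cite{BST}. Because $(k+1) \mid t$, this blow-up of $C_t^k$ has chromatic number exactly $\chi$, and the BST machinery guarantees that, provided $\beta_1 \le \beta_1(\Delta, \chi, \gamma)$ is a sufficiently small constant, every $n$-vertex graph $H$ with $\chi(H) \le \chi$, $\Delta(H) \le \Delta$, and $\bw(H) \le \beta_1 n$ embeds into the red subgraph carried by the $U_i$. I expect the main obstacle to be carrying out this embedding step cleanly: one must use the bandwidth hypothesis to produce a labelled partition of $V(H)$ into $t$ contiguous blocks of the correct sizes whose block-adjacency respects the cyclic $k$-th power structure of $C_t^k$, and then apply a Blow-up-type lemma on the regular pairs. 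Absorbing the regularity-lemma exceptional set and coping with a possibly uneven $\chi$-colouring of $H$ (the $\sigma(H)$ issue) also requires the usual care, but poses no essential difficulty.
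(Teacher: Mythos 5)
Your high-level strategy is the same as the paper's: apply the Regularity Lemma to the host, find a monochromatic high power of a path/cycle in the reduced graph via a Ramsey-type result, and then invoke the B\"ottcher--Schacht--Taraz embedding. However, there are two genuine gaps in how you execute it.

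\textbf{Gap 1: you cannot make the reduced graph complete.} You claim that discarding the clusters incident to too many irregular pairs ``leaves a complete $2$-edge-coloured graph $R^*$.'' This is false: after that cleanup, irregular pairs are still scattered among the remaining clusters --- each surviving cluster still has up to $O(\sqrt\eps\,m)$ irregular neighbours. So $R^*$ is a dense graph (minimum degree $(1-O(\sqrt\eps))m^*$), not a complete one, and Theorem~\ref{RPnkCnk} as stated cannot be applied to it. The paper handles exactly this point by introducing Lemma~\ref{SzPartPnk}, a three-colour variant (red/blue/bad) of the bound, valid when no vertex meets more than $\eps n$ bad edges; this is the tool one actually needs on the reduced graph, and it requires modifying the auxiliary-clique argument underlying Theorem~\ref{RPnkCnk} (as the paper does in Lemma~\ref{StabPnkH:modified}) rather than citing the complete-graph statement directly.

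\textbf{Gap 2: $C_t^{\chi-1}$ does not give the $(\chi+1)$-clique condition required by Theorem~\ref{BSTThm}.} Theorem~\ref{BSTThm} asks for a copy of $P^{\chi-1}_{(\eta+\mu)r}$ in the reduced graph \emph{with the additional property that every $\chi$-clique extends to a $(\chi+1)$-clique of $R(F)$}. In your target $C_t^{\chi-1}$ (with $\chi\mid t$), the $\chi$-cliques are exactly sets of $\chi$ consecutive vertices, and these do \emph{not} extend to $(\chi+1)$-cliques inside that power of a cycle, since no $\chi+1$ vertices are pairwise within distance $\chi-1$. You would therefore have no guarantee from BST, and the embedding would fail for graphs $H$ whose $\chi$-colourings are unbalanced (the extra clique is exactly the buffer BST uses to redistribute vertices between colour classes). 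The paper sidesteps this by taking $k=\chi(H)$ (not $\chi-1$) and finding a monochromatic $P^{\chi}_m$ in the reduced graph: this contains $P^{\chi-1}_m$, and there every $\chi$-clique automatically extends to a $(\chi+1)$-clique. Your parameter choice $k=\chi-1$ does not leave you this extra room.
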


As before, we can use Theorem~\ref{BPTW} to convert the hypothesis of sublinear bandwidth to a
condition on the expansion of all large subgraphs.

\begin{corollary} For any $\Delta \ge 1$, there exist $n_0$, $\beta_2$ and $\varepsilon$ such that,
whenever $n\geq n_0$ and $H$ is a $(\beta_2 n,\varepsilon)$-bounded $n$-vertex graph with
maximum degree at most $\Delta$, we have $R(H,H)\leq (2\chi(H)+4)n$.
\end{corollary}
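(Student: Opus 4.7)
The plan is simply to chain Theorem~\ref{BBwRam} together with Theorem~\ref{BPTW}. Theorem~\ref{BBwRam} says that if $H$ has maximum degree at most $\Delta$ and bandwidth at most $\beta_1 n$ (for a sufficiently small $\beta_1 = \beta_1(\Delta)$), then $R(H,H) \le (2\chi(H)+4)n$. Theorem~\ref{BPTW} says that sufficiently bounded expansion (at a scale $\beta_2 n$ and with expansion constant $\varepsilon$, both depending on $\Delta$ and on the target bandwidth) forces the bandwidth to be at most $\beta_1 n$. Composing these immediately yields the corollary.

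Concretely, given $\Delta \ge 1$, first I would invoke Theorem~\ref{BBwRam} with parameter $\Delta$ to obtain constants $\beta_1 = \beta_1(\Delta) > 0$ and $n_0^{(1)} = n_0^{(1)}(\Delta)$ such that every $n$-vertex graph $H$ with $\Delta(H) \le \Delta$, $\bw(H) \le \beta_1 n$ and $n \ge n_0^{(1)}$ satisfies $R(H,H) \le (2\chi(H)+4)n$. Next, I would feed $\Delta$ and $\beta_1$ into Theorem~\ref{BPTW} to obtain constants $\varepsilon = \varepsilon(\Delta,\beta_1) > 0$, $\beta_2 = \beta_2(\Delta,\beta_1) > 0$ and $n_0^{(2)} = n_0^{(2)}(\Delta,\beta_1)$ such that every $(\beta_2 n, \varepsilon)$-bounded $n$-vertex graph $H$ with $\Delta(H) \le \Delta$ and $n \ge n_0^{(2)}$ has $\bw(H) \le \beta_1 n$. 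Setting $n_0 := \max\{n_0^{(1)}, n_0^{(2)}\}$, any $H$ meeting the hypotheses of the corollary first satisfies the bandwidth bound by Theorem~\ref{BPTW} and then the Ramsey bound by Theorem~\ref{BBwRam}.

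There is essentially no obstacle: the corollary is a black-box combination, and all the real work is hidden in the two cited theorems. The only mild subtlety is keeping the order of quantification straight, so that $\beta_1$ is chosen before $\varepsilon$ and $\beta_2$ (since the latter two depend on the former), but this is a routine bookkeeping point rather than a genuine difficulty.
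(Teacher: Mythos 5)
Your proposal is correct and is precisely the argument the paper intends: the corollary is stated immediately after Theorem~\ref{BBwRam} with the remark that ``we can use Theorem~\ref{BPTW} to convert the hypothesis of sublinear bandwidth to a condition on the expansion of all large subgraphs,'' which is exactly your chaining of Theorem~\ref{BBwRam} (to fix $\beta_1$) followed by Theorem~\ref{BPTW} (to obtain $\varepsilon$, $\beta_2$, $n_0$). The quantifier bookkeeping you highlight is the only content of the deduction, and you handle it correctly.
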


One might hope to show that, under the conditions of Theorem~\ref{BBwRam} or its corollary,
$R(H,H) \le (\chi(H) + C)n$.  In order to prove this, a first step would be to show such a
bound for the case $H=P_n^k$, but there are likely to be additional difficulties in the
general case.
One asymptotically sharp result in this direction has been proved.

\begin{theorem}[S\'ark\"ozy, Schacht and Taraz~\cite{SarSchTar}] For every
$\gamma>0$ and $\Delta$, there exist $\beta>0$ and $n_0$ such that, whenever 
$n\geq n_0$ and $H$ is an $n$-vertex bipartite graph with maximum degree at most
$\Delta$, $\bw(H)\leq \beta n$, and parts of size $t_1$ and $t_2$ (where
$t_1\le t_2$), we have
\[R(H,H)\le (1+\gamma)\max(2t_1+t_2,2t_2)\,.\]
\end{theorem}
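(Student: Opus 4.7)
The plan is to follow the standard regularity-plus-embedding paradigm.  First I would apply the two-colour version of the Szemer\'edi Regularity Lemma to an arbitrary red/blue colouring of $E(K_N)$ with $N=(1+\gamma)\max(2t_1+t_2,2t_2)$ and parameters $\beta\ll\eps\ll\gamma/\Delta$, producing an equipartition $V_0,V_1,\ldots,V_k$ of $V(K_N)$ with $|V_0|\le\eps N$ and common cluster size $\ell\approx N/k$, and then form the reduced graph $R$ on $[k]$ by colouring an edge $ij$ red (resp.\ blue) whenever $(V_i,V_j)$ is $\eps$-regular with red (resp.\ blue) density at least $\tfrac12$.

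The heart of the argument is a Ramsey-type statement on the reduced graph: I would show that in some colour, say red, $R$ contains a ``bipartite container'' with vertex bipartition $(X,Y)$ and enough connectivity to support a bandwidth-embedding of $H$, subject to $|X|\,\ell\ge(1+\gamma/3)t_1$ and $|Y|\,\ell\ge(1+\gamma/3)t_2$.  The number $N=(1+\gamma)\max(2t_1+t_2,2t_2)$ is calibrated precisely so that, for any red/blue colouring, at least one colour contains such a container, with a case split according to which term achieves the maximum.  When $2t_1+t_2\ge 2t_2$ (the balanced regime $t_1\ge t_2/2$) the value $n+t_1$ matches the Chv\'atal--Harary--Burr lower bound, and one extracts a monochromatic connected matching in $R$ whose two sides are fairly even; when $2t_2>2t_1+t_2$ (the unbalanced regime $t_1<t_2/2$) one uses the extra room to find a monochromatic bipartite structure with a large side capable of absorbing the enormous part of size $t_2$.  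In both cases the extraction reduces to well-known Ramsey statements on monochromatic connected bipartite subgraphs of two-coloured cliques.

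Having obtained the monochromatic red container, I would embed $H$ into the corresponding union of red $\eps$-regular pairs using the bandwidth-embedding machinery of B\"ottcher, Schacht and Taraz~\cite{BST}.  Since $\bw(H)\le\beta n$ with $\beta\ll 1/k$, the graph $H$ decomposes along a bandwidth-$\beta n$ vertex ordering into short bipartite ``windows'' of size $\ll\ell$, which can be routed one matched pair of clusters at a time through the skeleton of the container; transitions between adjacent windows involve only $o(n)$ vertices, so no cluster is overloaded.  The principal obstacle will be the Ramsey step on $R$: one needs not merely a large monochromatic connected matching but one whose two sides are \emph{shaped} to receive the potentially very imbalanced parts $t_1,t_2$ of $H$.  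The case split at the threshold $t_2=2t_1$ is exactly what forces the $\max$ form of the bound, and getting the thresholds to line up precisely with the $(1+\gamma)$ slack is where the main combinatorial work lies; the remaining regularity and bandwidth-embedding ingredients are by now standard and deliver the stated inequality.
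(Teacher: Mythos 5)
The paper does not prove this result: it is quoted from S\'ark\"ozy, Schacht and Taraz, cited from \cite{SarSchTar} (a paper listed in the bibliography as ``in preparation''), so there is no in-paper proof against which to compare your attempt.

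Assessed on its own, your framework---regularity lemma, a Ramsey step on the reduced graph $R$, then a bandwidth embedding in the spirit of B\"ottcher--Schacht--Taraz---is the right paradigm, but the Ramsey step on $R$ is the entire content of the theorem, and you have deferred it to ``well-known Ramsey statements on monochromatic connected bipartite subgraphs of two-coloured cliques'' without identifying which statement. That is a genuine gap, and one cannot simply point to the standard connected-matching theory. Gy\'arf\'as's tight bound gives a monochromatic connected matching of size $\lceil k/3\rceil$ in any two-coloured $K_k$ (extremal example: red $=K_{k/3,2k/3}$), so each side of such a matching carries cluster capacity only about $N/3$; but in the unbalanced regime $t_2>2t_1$, where $N=(1+\gamma)2t_2$, you need a side of capacity at least $t_2\approx N/2$, which a plain connected matching of the guaranteed size cannot supply. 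Indeed in the Gy\'arf\'as extremal colouring what saves you is not the matching but the fact that the red graph is itself a complete bipartite graph $K_{k/3,2k/3}$, whose two sides are shaped to absorb $t_1$ and $t_2$; showing that \emph{every} two-colouring of $K_k$ contains, in one colour, a connected bipartite structure with sides of the right, possibly very lopsided, total capacity, and that the threshold comes out to exactly $\max(2t_1+t_2,\,2t_2)$, is a bespoke combinatorial lemma and is precisely what is missing from your sketch. Until that lemma is stated and proved, your proposal is a plausible plan rather than a proof.
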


A final observation is that combining the Four Colour
Theorem~\cite{AppHak,AppHakKoch} and another result of B\"ottcher, Pruessmann,
Taraz and W\"urfl~\cite{BPTW}, namely that the bandwith of every $n$-vertex planar graph
of maximum degree $\Delta$ is bounded by $15n/\log_{\Delta}n$, we obtain, as a corollary 
to Theorem~\ref{BBwRam}, the following.

\begin{corollary}\label{PlanarCor} For every $\Delta$ there exists $n_0$ such
that, whenever $n\geq n_0$ and $H$ is an $n$-vertex planar graph with maximum degree $\Delta$,
we have $R(H,H)\le 12n$.
\end{corollary}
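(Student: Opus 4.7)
The plan is to combine three ingredients in a completely straightforward way, essentially packaging the hint given in the paragraph before the corollary.

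First, I would observe that if $H$ is planar, then by the Four Colour Theorem we have $\chi(H)\le 4$, so the bound $(2\chi(H)+4)n$ provided by Theorem~\ref{BBwRam} is at most $12n$. Thus it suffices to verify that the hypotheses of Theorem~\ref{BBwRam} are met for every sufficiently large planar $n$-vertex graph $H$ with $\Delta(H)\le\Delta$, namely that $\bw(H)\le \beta_1 n$, where $\beta_1=\beta_1(\Delta)$ is the constant produced by that theorem.

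Second, I would invoke the cited bandwidth bound of B\"ottcher, Pruessmann, Taraz and W\"urfl~\cite{BPTW} for planar graphs: every $n$-vertex planar graph $H$ with $\Delta(H)\le\Delta$ satisfies $\bw(H)\le 15n/\log_\Delta n$. Since $\Delta$ is fixed, $\log_\Delta n\to\infty$ as $n\to\infty$, so there exists $n_1=n_1(\Delta,\beta_1)$ such that $15/\log_\Delta n\le \beta_1$ whenever $n\ge n_1$. For such $n$ we then have $\bw(H)\le\beta_1 n$.

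Finally, setting $n_0:=\max(n_1,n_0^{\text{BBwRam}}(\Delta))$, where $n_0^{\text{BBwRam}}(\Delta)$ is the threshold supplied by Theorem~\ref{BBwRam}, we may apply Theorem~\ref{BBwRam} to conclude
\[
R(H,H)\le (2\chi(H)+4)n \le 12n,
\]
as required. There is no real obstacle here; the only point requiring even minor care is to fix the order of quantifiers correctly, so that $\beta_1$ (depending on $\Delta$) is chosen first via Theorem~\ref{BBwRam} and then $n_0$ is chosen large enough (in terms of $\Delta$ and $\beta_1$) for the planar bandwidth bound of~\cite{BPTW} to force $\bw(H)\le\beta_1 n$.
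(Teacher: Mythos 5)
Your proposal is correct and follows exactly the route the paper indicates: use the Four Colour Theorem to bound $\chi(H)\le 4$, the B\"ottcher--Pruessmann--Taraz--W\"urfl bandwidth bound $\bw(H)\le 15n/\log_\Delta n$ to ensure $\bw(H)\le\beta_1 n$ for large $n$, and then apply Theorem~\ref{BBwRam}. This matches the paper's (sketched) argument and the quantifier bookkeeping is handled correctly.
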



\section{A version of the blow-up lemma}

In our proofs, we need an embedding lemma, similar in style to the Blow-up Lemma of
Koml\'os, S\'ark\"ozy and Szemer\'edi~\cite{Blowup}.  That result could be used as it stands,
but using an alternative approach allows us to obtain significantly better bounds on the sizes
of the graphs to which our results apply.

Instead of considering `$(\varepsilon,\delta)$-super-regular' pairs of sets (as in the original
Blow-up Lemma), where there are relatively few but well distributed edges, we will be interested in
pairs of vertex sets within two-coloured complete graphs which do not contain a red $K_{s,s}$ for
some $s$.  By the K\"{o}vari-S\'{o}s-Tur\'{a}n theorem~\cite{KovSosTur}, this condition strongly limits
the number and distribution of red edges. We give two forms.

\begin{theorem} [K\"{o}v\'ari, S\'{o}s and Tur\'{a}n~\cite{KovSosTur}] \label{KSTBound}
\begin{itemize}
\item[]
\item [(a)] For all $s, n \in {\mathbb N}$ with $n \ge s^2$, any $n$-vertex graph which does not contain $K_{s,s}$ has at most $2n^{2-\frac{1}{s}}$ edges.

\item [(b)] Let $G$ be a bipartite graph, with parts $X$ and $Y$, which does not contain a
copy of $K_{s,s}$. If $2\big(s/|Y|\big)^\frac{1}{s}\leq p\leq 1$, then at most $2s/p$ vertices in $X$ have degree greater than $p|Y|$.
\end{itemize}
\end{theorem}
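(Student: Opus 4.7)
My plan is to handle both parts with the classical K\H{o}v\'ari--S\'os--Tur\'an double count: in any graph with no $K_{s,s}$, each $s$-subset $S$ of vertices lies in $N(v)$ for at most $s-1$ vertices $v$, since otherwise $s$ such vertices $v$ together with $S$ would span a copy of $K_{s,s}$.

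For part~(a), let $e=e(G)$ and $\bar d=2e/n$. Summing the above observation over all $v$ gives
\[
 \sum_{v\in V(G)}\binom{d(v)}{s}\le(s-1)\binom{n}{s}\,.
\]
Jensen's inequality applied to the convex polynomial $\binom{x}{s}$ bounds the left side below by $n\binom{\bar d}{s}$, at least when $\bar d\ge s$; when $\bar d<s$ the desired edge bound $e\le 2n^{2-1/s}$ already follows directly from $n\ge s^{2}$. Using $\binom{\bar d}{s}\ge(\bar d-s+1)^{s}/s!$ and $\binom{n}{s}\le n^{s}/s!$ and rearranging produces $\bar d\le s^{1/s}n^{1-1/s}+(s-1)$. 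Then $s^{1/s}\le 2$ combined with $n\ge s^{2}$ lets me absorb the $(s-1)$ term into the leading one, giving $e=n\bar d/2\le 2n^{2-1/s}$.

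For part~(b), I would dualize the count, looking from $Y$ into $X_{0}:=\{v\in X:d(v)>p|Y|\}$. Set $t=|X_{0}|$ and $d'(y)=|N(y)\cap X_{0}|$. Applying the observation to $s$-subsets of $X_{0}$ yields
\[
 \sum_{y\in Y}\binom{d'(y)}{s}\le(s-1)\binom{t}{s}\,,
\]
while $\sum_{y\in Y}d'(y)=e(X_{0},Y)>tp|Y|$, so the average of $d'(y)$ over $Y$ exceeds $pt$ and Jensen gives $\sum_{y}\binom{d'(y)}{s}\ge|Y|\binom{pt}{s}$. Combining,
\[
 |Y|\binom{pt}{s}\le(s-1)\binom{t}{s}\,.
\]
To conclude I would suppose for contradiction that $t>2s/p$, so $pt>2s$; then every factor $pt-i$ with $0\le i\le s-1$ exceeds $pt/2$, giving $\binom{pt}{s}>(pt/2)^{s}/s!$ while $\binom{t}{s}\le t^{s}/s!$. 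The displayed inequality then collapses to $|Y|(p/2)^{s}<s-1<s$, i.e., $|Y|p^{s}<2^{s}s$, directly contradicting the hypothesis $p\ge 2(s/|Y|)^{1/s}$ (which is exactly $|Y|p^{s}\ge 2^{s}s$).

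No substantive obstacle is expected: both parts are one-step applications of the same counting principle, and the numerical hypothesis on $p$ in (b) is calibrated precisely to produce the required contradiction. The only pieces requiring care are the two routine boundary checks, namely the case $\bar d<s$ in (a) and the factor-by-factor lower bound $pt-i>pt/2$ in (b) under the standing assumption $pt>2s$.
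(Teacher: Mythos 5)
The paper does not prove this result---it is stated with attribution to K\"ov\'ari, S\'os and Tur\'an and used as a black box---so there is no internal proof to compare against. Your argument is correct, and it is the standard Zarankiewicz-type double count: pairs $(v,S)$ with $S$ an $s$-subset of $N(v)$ are at most $(s-1)\binom{n}{s}$ (respectively $(s-1)\binom{t}{s}$), and the hypotheses $n\ge s^2$ in (a) and $p\ge 2(s/|Y|)^{1/s}$ in (b) are exactly what make the resulting inequalities deliver the stated constants; I checked the arithmetic in both cases, including the boundary case $\bar d<s$ and the factor bound $pt-i>pt/2$ under $pt>2s$. One small remark: $\binom{x}{s}$ is not a convex polynomial on all of $[0,\infty)$ when $s\ge 3$ (its second derivative changes sign), so Jensen should strictly be applied to the nonnegative convex extension that equals $\binom{x}{s}$ for $x\ge s-1$ and $0$ below; this function agrees with $\binom{x}{s}$ at every nonnegative integer, so the conclusion $\sum_v\binom{d(v)}{s}\ge n\binom{\bar d}{s}$ for $\bar d\ge s-1$ is unaffected. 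Your ``at least when $\bar d\ge s$'' points at the right issue; with the extension made explicit the step is airtight.
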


%
%

We can now state and prove our embedding lemma.

\begin{lemma} \label{MyBlowUp}
Let $t$, $s$, $r$ and $d$ be natural numbers, with $s \ge d^2$.
Suppose that the edges of a complete graph $G$ with vertex set $V$ are coloured red and blue.
Let $V_1,\ldots,V_t$ be disjoint subsets of $V$, each of size at most $s$.
Define a graph $G'$ on disjoint vertex sets $V'_1,\ldots,V'_t$, where
$|V'_i|=\max\big(|V_i|-\big\lfloor 4r^2s^\frac{2r-1}{2r}(d+1)\big\rfloor,0\big)$ for each $i$, by putting
edges between all vertices in $V'_i$ and $V'_j$ whenever there is no red $K_{r,r}$ between
$V_i$ and $V_j$ in $G$. If $H$ is any subgraph of $G'$ with maximum degree $d$, then
$G$ contains a blue copy of $H$.
\end{lemma}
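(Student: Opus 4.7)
The plan is a greedy vertex-by-vertex embedding of $H$ into the blue subgraph of $G$, with the K\H ov\'ari--S\'os--Tur\'an bound (Theorem~\ref{KSTBound}(b)) used to ensure that candidate sets remain non-empty throughout. The natural parameter choice is $p := s^{-1/(2r)}$, chosen so that $ps = s^{(2r-1)/(2r)}$ matches the exponent of $s$ appearing in the ``buffer'' $|V_i| - |V'_i|$, while $2r/p = 2rs^{1/(2r)}$ is of strictly smaller order.

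Before embedding, for each ordered pair of parts $(V_i,V_j)$ with no red $K_{r,r}$ between them, I apply Theorem~\ref{KSTBound}(b) with $X=V_i$ and $Y=V_j$: this identifies at most $2r/p$ vertices of $V_i$, which I call \emph{bad for $V_j$}, having red-degree greater than $p|V_j| \le ps$ in $V_j$; the remaining vertices of $V_i$ are \emph{good for $V_j$}. Then I order the vertices of $H$ arbitrarily as $v_1,\ldots,v_h$ and embed them in this order. When embedding $v_k \in V'_{i_k}$, I pick as its image $\phi(v_k) \in V_{i_k}$ any vertex satisfying
\textbf{(i)} $\phi(v_k)$ is not already used;
\textbf{(ii)} $\phi(v_k)$ is blue-adjacent to $\phi(v_\ell)$ for every previously embedded neighbour $v_\ell$ of $v_k$ in $H$; and
\textbf{(iii)} $\phi(v_k)$ is good for $V_{i_m}$ for every still-unembedded neighbour $v_m \in V'_{i_m}$ of $v_k$ in $H$.
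Condition~(iii) is the crucial invariant: it means that at future steps, whenever a newly embedded vertex $v_j$ updates the candidate sets of its unembedded neighbours in $H$, each such neighbour loses at most $ps$ candidates.

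The heart of the proof is the budget calculation showing that such a $\phi(v_k)$ exists. The vertices of $V_{i_k}$ violating one of (i)--(iii) number at most
\[
 |V'_{i_k}| \;+\; d\cdot ps \;+\; d\cdot \frac{2r}{p}
 \;=\; |V'_{i_k}| \;+\; d\,s^{(2r-1)/(2r)} \;+\; 2dr\,s^{1/(2r)},
\]
the three terms bounding respectively the used images in $V_{i_k}$, the red-neighbourhoods in $V_{i_k}$ of the at most $d$ previously embedded neighbours of $v_k$ (each of which was chosen good for $V_{i_k}$ when it was embedded, by invariant~(iii) applied at its step), and the union of the sets bad for the parts of the at most $d$ unembedded neighbours of $v_k$. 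Since $|V_{i_k}| - |V'_{i_k}| = \lfloor 4r^2 s^{(2r-1)/(2r)}(d+1)\rfloor$ whenever $|V'_{i_k}| \ge 1$, a short arithmetic check, in which the hypothesis $s \ge d^2$ lets one absorb the lower-order term $2dr\,s^{1/(2r)}$ into the spare factor $4r^2(d+1) - d$, confirms that a valid image exists.

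The main obstacle is verifying the hypothesis $p \ge 2(r/|V_j|)^{1/r}$ of Theorem~\ref{KSTBound}(b), which may fail when $|V_j|$ is small. In that regime, however, the definition of $V'_j$ forces $|V'_j| = 0$, so nothing is ever embedded into $V_j$, and the classification of $V_i$ as good or bad for $V_j$ is irrelevant to invariant~(iii); for pairs $(V_i, V_j)$ with $|V'_j| \ge 1$, the lower bound $|V_j| \ge 4r^2 s^{(2r-1)/(2r)}(d+1)$ is comfortably enough to make the KST hypothesis go through. Handling this case distinction cleanly, and tracking that condition~(iii) is well-defined even when $v_k$ has unembedded neighbours in parts $V_{i_m}$ with $|V'_{i_m}|$ small, is the only non-routine bookkeeping in the argument.
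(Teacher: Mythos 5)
Your proof is correct and follows essentially the same greedy one-vertex-at-a-time embedding strategy as the paper, with Theorem~\ref{KSTBound}(b) controlling red degrees across pairs $(V_i,V_j)$ admitting no red $K_{r,r}$. The only difference is in the bookkeeping: you pre-classify vertices of each $V_i$ as good or bad for each $V_j$ and enforce the forward-looking condition~(iii) on each chosen image, whereas the paper maintains shrinking ``allowed sets'' for each unembedded vertex and measures red degrees relative to those sets -- the two implementations are equivalent and yield the same bound.
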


\begin{proof}
Let $G^{\rm blue}$ be the spanning subgraph of $G$ whose edges are the blue edges of $G$.

If $r=1$, then $G'$ is isomorphic to a subgraph of $G^{\rm blue}$, and the result is trivially true. We will assume from now on that $r\geq 2$.


Let $p=4r^2s^{-1/2r}$: then, for each $i$, $|V_i|-|V'_i|\leq p(d+1)s$.
Note that, if $p\geq \frac{1}{d+1}$, then each set $V'_i$ is empty and 
there is nothing to prove, so we can assume $p<\frac{1}{d+1}$. By 
Theorem~\ref{KSTBound}(b), if $X$ and $Y$ are vertex sets within a pair 
$(V_i,V_j)$ that does not contain a red $K_{r,r}$, and $|Y|\geq \frac{\sqrt{s}}{2^r r^{2r-1}}$, then at most $2r/p$ vertices in $X$ 
have red-degree greater than $p|Y|$.

Choose an embedding $\psi:V(H)\rightarrow V(G')$. Let $V(H) = \{x_1, 
x_2,\ldots\}$.  We will successively choose vertices $\phi(x_1), 
\phi(x_2),\ldots\in V(G)=V(G^{\rm blue})$ which give an embedding $
\phi$ of $H$ into $G^{\rm blue}$. For each $x_i\in H$, set 
$A_{x_i,1}=V_j$, where $V'_j$ is the part of $G'$ containing 
$\psi(x_i)$.

The set $A_{x_i,t}$ is called the \textit{allowed set} of $x_i$ at time 
$t$; we invariably choose $\phi(x_t)$ to be within its allowed set at 
time $t$. We maintain two properties. First, if $x_ix_j\in E(H)$ and 
$x_i$ has been embedded, then the allowed set of $x_j$ is entirely 
within the blue-neighbourhood of $x_i$.  Second, if, at time $t$, $x_i$ 
has not yet been embedded, then its allowed set has size larger than 
$ps/2=2r^2s^{\frac{2r-1}{2r}}$: this quantity is definitely larger than 
the $\frac{\sqrt{s}}{2^r r^{2r-1}}$ required to apply 
Theorem~\ref{KSTBound}. At time $1$, the first condition is trivially 
satisfied, and the second is true by the choice of the sizes of the 
$V'_i$.

At time $t$, we choose a vertex $\phi(x_t)\in A_{x_t,t}$ which is 
blue-adjacent to at least $(1-p)|A_{x_\ell,t}|$ of the vertices of 
$A_{x_\ell,t}$ for each $\ell>t$ with $x_\ell$ adjacent to $x_t$. This 
is possible since, by Theorem~\ref{KSTBound}(b), for each of the at 
most $d$ neighbours of $x_t$ not yet embedded, at most $2r/p$ 
vertices in $A_{x_t,t}$ fail to be blue-adjacent to 
$(1-p)|A_{x_\ell,t}|$ of the vertices of $A_{x_\ell,t}$, and 
$|A_{x_t,t}|\geq ps/2>d\frac{2r}{p}$ by the choice of $s$.

Having chosen $\phi(x_t)$, for each $\ell>t$ we set $A_{x_\ell,t+1}$ 
equal to $A_{x_\ell,t}-\{\phi(x_t)\}$ if $x_tx_\ell\notin E(H)$, and 
equal to $A_{x_\ell,t}\cap\Gamma_{\rm blue}(\phi(x_t))$ if $x_\ell$ is
adjacent to $x_t$.  It is clear that the allowed sets maintain the 
first property.  If $x_i$ is a vertex not yet embedded, with 
$\psi(x_i)\in V'_j$, then there are two reasons why a vertex $v\in V_j$ 
should not be in $A_{x_i,t+1}$: first, it might not be blue-adjacent to 
one of the at most $d$ embedded neighbours of $x_i$, and second, it 
might be the image under $\phi$ of some preceding vertex (in $V'_j$) of 
$H$. Thus we have
\[
 |A_{x_i,t+1}|
  \geq
 (1-p)^d|V_j|-|V'_j|
  >
 (1-pd)|V_j|-(|V_j|-p(d+1)s)
  \geq 
 \frac{ps}{2},
\]
so that the allowed sets maintain both the required conditions. It 
follows that this algorithm successfully embeds $H$ into $G^{\rm blue}$.
\end{proof}

\section{Powers of paths versus general graphs}\label{2col}

The aim of this section is to prove Theorem~\ref{2alwaysgood}, stating that $R(P_n^k,H)=(\chi(H)-1)(n-1)+\sigma(H)$
whenever $n$ is sufficiently large in terms of $k$ and $|H|$, and therefore that the family ${\mathcal B}_k$ of graphs
of bandwidth at most~$k$ is always-good.

First we need to give a stability version of an old theorem of Erd\H{o}s~\cite{ErdLow} stating that
$R(P_n,K_\ell)=(\ell-1)(n-1)+1$.

\begin{lemma} \label{StabLem2}
Given $\ell\geq 2$, $0\leq\alpha<1/2$, $0\leq\eps<(1-\alpha)/\ell$, and 
$n\geq 1 + 1/\eps$, the following is true. If $G$ is a two-coloured 
graph on $(\ell-1-\alpha)(n-1)$ vertices, in which every vertex is 
adjacent to all but at most $\eps(n-1)$ vertices of $G$, containing 
neither a red copy of $P_n$ nor a blue copy of $K_\ell$, then we can 
partition $V(G)$ into $\ell-1$ parts each containing at most $n-1$ 
vertices, such that every edge of $G$ within a part is red, and every 
edge of $G$ between different parts is blue.
\end{lemma}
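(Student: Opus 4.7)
The plan is to proceed by induction on $\ell$.

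The base case $\ell=2$ is immediate: the hypothesis rules out blue edges, so every edge of $G$ is red, and since $|V(G)|=(1-\alpha)(n-1)<n$ the red graph trivially avoids $P_n$; the single-part partition $\{V(G)\}$ works.

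For the inductive step, the first task is to exhibit a blue $K_{\ell-1}$ in $G$. By Erd\H os--Gallai, no red $P_n$ forces at most $(n-2)N/2$ red edges, where $N=(\ell-1-\alpha)(n-1)$, and the minimum-degree assumption gives at most $\eps(n-1)N/2$ missing pairs; if $G$ also had no blue $K_{\ell-1}$ then Tur\'an would bound the blue edges by $\bigl(1-\tfrac{1}{\ell-2}\bigr)N^2/2$, and summing these and comparing with $\binom{N}{2}$ produces a contradiction precisely when $(\ell-2)\eps<1-\alpha$, which follows from the standing assumption $\eps<(1-\alpha)/\ell$. Next, fix such a blue $K_{\ell-1}$ $B=\{u_1,\dots,u_{\ell-1}\}$, chosen greedily so that $\sum_i\deg_{\mathrm{red}}(u_i)$ is small (pick each $u_i$ as a low-red-degree vertex in the common blue neighborhood of its predecessors). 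The common blue neighborhood $C=\bigcap_i\Gamma_{\mathrm{blue}}(u_i)$ has no blue edge (else $B$ extends to a blue $K_\ell$), hence all its edges are red; the minimum-degree assumption then makes the red graph on $C$ near-complete, and the classical fact that min degree $\ge (n-1)/2$ on $\ge n$ vertices forces a $P_n$ (applicable because $\eps<1/2$) yields $|C|\le n-1$.

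The partition is then extracted by classifying each $v\in V(G)\setminus B$ by its red-and-missing pattern to $B$: in the extremal configuration each such $v$ sends red to exactly one $u_i$ and blue to the others, and in our near-extremal setting the few ``error'' vertices (with two or more red edges to $B$, or with a missing edge to $B$) are bounded using the greedy control on $\sum_i\deg_{\mathrm{red}}(u_i)$ together with the trivial bound $\sum_i|\{v:vu_i\text{ missing}\}|\le(\ell-1)\eps(n-1)$. One then extracts a part $V_1$ (the closed red neighborhood of a well-chosen $u_i$, including $C$ and those errors that naturally attach), verifies $|V_1|\le n-1$ along with the ``red inside / blue between $V_1$ and its complement'' properties, and applies the inductive hypothesis to $G[V(G)\setminus V_1]$ with $\ell-1$ in place of $\ell$ and updated parameter $\alpha'=\alpha+1-|V_1|/(n-1)$.

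The chief obstacle is this extraction step: controlling the error vertices sharply enough so that $V_1$ is genuinely red-inside / blue-outside, and more subtly, arranging $|V_1|$ close to $n-1$ so that $\alpha'$ remains in $[0,1/2)$ for the inductive call. The greedy choice of $B$ addresses the first issue; the second may require careful selection of which $u_i$ to pair with $C$, or an ad hoc adjustment to enlarge $V_1$ to size exactly close to $n-1$. The remaining conditions (minimum degree, bound on $\eps$, lower bound on $n$) propagate to the sub-problem routinely.
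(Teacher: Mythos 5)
Your approach takes a genuinely different route from the paper's. The paper considers a maximal red path $P$, sets $X = \Gamma(u)\setminus P$ where $u$ is the first vertex of $P$, observes that $X$ carries no blue $K_{\ell-1}$ (since by maximality of $P$ every vertex of $X$ is blue-adjacent to $u$), applies induction to $X$ to obtain $\ell-2$ parts, and then shows that the remaining vertices (containing $P$) form the final part. You instead try to produce a blue $K_{\ell-1}$ by a counting argument, peel off a single part $V_1$, and recurse on the complement with parameter $\ell-1$. Several steps of your plan do not hold up. Most immediately, the set $C=\bigcap_i\Gamma_{\mathrm{blue}}(u_i)$ is necessarily \emph{empty}: any $v\in C$ is blue-adjacent to every vertex of the blue clique $B$, so $B\cup\{v\}$ is already a blue $K_\ell$, contradicting the hypothesis. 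Thus the bound $|C|\leq n-1$ is vacuous and $C$ cannot serve as a seed for $V_1$; perhaps you intended the common blue neighbourhood of only $\ell-2$ of the $u_i$, but that would need to be stated, and the surrounding argument revised.

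Second, and more fundamentally, the extraction of $V_1$ --- the step you yourself flag as ``the chief obstacle'' --- is left as a sketch rather than carried out. Beyond the problems you raise (bounding the error vertices, ensuring $|V_1|$ is close to $n-1$ so that $\alpha'$ stays in $[0,1/2)$ and $\eps<(1-\alpha')/(\ell-1)$ persists), there is a further hypothesis of the inductive call that you do not address at all: to apply the lemma to $G[V(G)\setminus V_1]$ with parameter $\ell-1$, one must verify that $G[V(G)\setminus V_1]$ contains no blue copy of $K_{\ell-1}$. This is not automatic for an arbitrary choice of $V_1$, and it is exactly what the paper's choice of $X\subset\Gamma_{\mathrm{blue}}(u)$ is designed to guarantee (a blue $K_{\ell-1}$ in $X$ would extend by $u$ to a blue $K_\ell$). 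As written, your argument therefore has a genuine gap: the central structural step is a plan, not a proof, and the recursion's hypotheses are not established.
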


\begin{proof}
We prove the statement by induction on $\ell$. The case $\ell=2$ is trivial.
Suppose $\ell\geq 3$, and that the statement is true for smaller values of $\ell$.

Let $G$ satisfy the conditions of the lemma, and let $P$ be a maximal red path in $G$, so we have $|P|<n$.
Let $u$ be the first vertex of $P$, and set $X=\Gamma(u)\setminus P$.  By maximality of $P$, every vertex of
$X$ is blue-adjacent to $u$.  It follows that $G[X]$ is a two-coloured graph containing
neither a red copy of $P_n$ nor a blue copy of $K_{\ell-1}$, in which every
vertex is adjacent to all but at most $\eps(n-1)$ vertices of $X$.  Since we have
\[
|X|\geq d(u)-(|P|-1)\geq (\ell-1-\alpha)(n-1)-\eps (n-1)-(n-2)>(\ell-2-\alpha-\eps)(n-1)
\]
and $\eps < (1-\alpha-\eps)/(\ell-1)$, it follows by induction that we can partition $X$ into $\ell-2$ parts,
$X= X_1 \cup \cdots \cup X_{\ell -2}$, such that any edge within a part is red, while any edge between
different parts is blue.  For convenience, we assume that the sets $X_1,\ldots,X_{\ell-2}$ are in increasing
order of size.  Each part contains at most $n-1$ vertices, and therefore the smallest part size $|X_1|$ is at
least $(1-\alpha-\eps)(n-1)$.

Since $1-\alpha-3\eps>0$, we have $\delta(G[X_i])>|X_i|/2$ for each $i$, and
therefore by Dirac's theorem $G[X_i]$ is Hamiltonian for each $i$.  Now observe
that, for each $i$,
$$
|X_i|+|P|\geq |G|-\eps(n-1)-(\ell-3)(n-1)\geq (2-\alpha-\eps)(n-1)>(1+2\eps)(n-1).
$$
It follows that, if $P'$ is a red path in $G[P]$ covering all but at
most $\eps(n-1)$ vertices of $P$, then an endvertex of $P'$ cannot send any red edges to any set $X_i$.

The first vertex $u$ of $P$ has at most $\eps(n-1)$ non-neighbours in $G$
(including itself), so it must be adjacent to at least one vertex $v$ among the
last $\eps(n-1)$ vertices of $P$.  This vertex $v$ is the endvertex of the red
path from $u$ to $v$ following $P$, which covers all but at most $\eps(n-1)$
vertices of $P$. It follows that $v$ has no red neighbours in any set $X_i$.

Since $|X_1|>2\eps(n-1)$, we can find in $X_1$ a common neighbour $x_1$ of the
vertices $u$ and $v$; similarly in $X_2$ we can find a common neighbour of
$u,v$ and $x_1$, and so on until we find $\ell$ vertices forming a clique in
$G$. Since there is no blue $K_\ell$ in $G$, and since all the other edges are
blue, it must be the case that $uv$ is red.  It now follows that every vertex in $P$ is the
endvertex of a path covering all but at most $\eps(n-1)$ vertices of $P$.

For each $1\leq i\leq\ell-2$, let $Y_i$ be the red component of $G$ containing $X_i$, and
let $Y_{\ell-1}$ be the set consisting of the remaining vertices of $G$.  Since $G$
contains no red $P_n$, the $Y_i$ are all distinct, and $P\subset Y_{\ell-1}$.  We
claim that this partition satisfies the desired properties.  By definition, there are
no red edges between any pair of parts.  Since every edge in a part can be
extended to a copy of $K_\ell$ in $G$ by choosing greedily one vertex from each
other part, every edge within a part must be red.  Finally, since each $G[Y_i]$ has minimum
degree at least $|Y_i|-\eps(n-1)>|Y_i|/2$, each component contains a spanning
path by Dirac's theorem: as $G$ contains no $P_n$, this implies that $|Y_i| < n$ for each $i$.
\end{proof}

In the next lemma, we need only the special case of Lemma~\ref{StabLem2} where $G$ is complete --
in this case, the proof above can be streamlined.  We will make use of the full version of the
lemma later.

Our next aim is to prove a similar stability result for $R(P_n^k,H)$.  We do not need the full strength 
of the following result in order to establish the value of $R(P_n^k,H)$, but it is necessary for the proof of Theorem~\ref{NExpAlwaysGood} later.

\begin{lemma} \label{StabPnkH} 

Let $H$ be a graph, $k$ a natural number, and $\eps$ a positive constant at most $1/2|H|^2$. Set 
$n_0 = (1/\eps)(200 k^2 |H| / \eps)^{4k|H|}$. If $n\geq n_0$ and $G$ is a two-coloured complete graph on at 
least $(\chi(H)-1)n- n/6$ vertices, which contains neither a red copy of $P_n^k$ nor a blue copy of $H$, then 
there is a partition $V(G)=V_1\cup \cdots\cup V_{\chi(H)-1}\cup L$ with the following properties.

\begin{itemize}
  \item $|L|\leq\eps n$.
  \item $2n/3 \le |V_i| < n$ for each $i$.  
  \item For each $i$ and $v\in V_i$, $v$ has at most $\eps |V_i|$ blue 
        neighbours in $V_i$.
  \item For each $i$ and $j$, no vertex in $V_i$ has more than 
        $\eps |V_j|$ red neighbours in $V_j$.
\end{itemize}
\end{lemma}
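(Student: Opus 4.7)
My plan is to reduce Lemma~\ref{StabPnkH} to the stability result Lemma~\ref{StabLem2} for paths versus cliques, using Theorem~\ref{KSTBound} for an initial cleanup and Lemma~\ref{MyBlowUp} to bridge the gap between our hypotheses and those of Lemma~\ref{StabLem2}. First, since $G$ contains no blue $H$, its blue subgraph contains no $K_{|H|,|H|}$; Theorem~\ref{KSTBound}(b) with $p$ of order $\eps/|H|$ isolates a set $L_0$ of size $O(|H|^2/\eps)$ consisting of all vertices with blue-degree exceeding $p|G|$. On $W := V(G)\setminus L_0$, every vertex has red-degree at least $(1-p)|G|$, and I will extract the structured partition from $W$.

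Next I verify the hypotheses of Lemma~\ref{StabLem2} for $W$ with $\ell=\chi(H)$. For the no-red-$P_n$ condition: a red $P_n$ in $W$ would, via a greedy P\'osa--Seymour-style extension, yield a red $P_n^k$, since the common red-neighbourhood of any $k$ consecutive vertices has size at least $(1-kp)|G|$ and so admits a next vertex completing the $k$th-power structure, contradicting the hypothesis. For the no-blue-$K_{\chi(H)}$ condition: a blue $K_{\chi(H)}$ on vertices $u_1,\ldots,u_{\chi(H)}$ can be lifted to a blue copy of $H$ via Lemma~\ref{MyBlowUp}. Concretely, I iteratively construct $\chi(H)$ disjoint sets $B_1,\ldots,B_{\chi(H)}$, each of size exceeding the threshold required by Lemma~\ref{MyBlowUp} with $r=|H|$ and $d=\Delta(H)$, such that no pair $(B_i,B_j)$ contains a red $K_{|H|,|H|}$. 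The sets $B_i$ are built from iterated intersections of blue-neighbourhoods around the $u_i$, drawing largely on the blue-heavy vertices of $L_0$ (since blue is sparse on $W$) and applying Theorem~\ref{KSTBound}(b) to exclude the bounded number of candidates that are red-heavy to some $B_j$. Lemma~\ref{MyBlowUp} then embeds a blue $H$ using the $\chi(H)$-colouring of $H$, yielding a contradiction.

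With the hypotheses verified, Lemma~\ref{StabLem2} applied to $W$ with $\ell=\chi(H)$, $\alpha$ chosen so that $(\chi(H)-1-\alpha)(n-1)$ matches $|W|$ (forcing $\alpha$ slightly above $1/6$), and error parameter $p$, produces a partition $W=V_1\cup\cdots\cup V_{\chi(H)-1}$ with $|V_i|\le n-1$, all within-part edges red, and all between-part edges blue. Taking $L=L_0$, the four claimed properties now follow: $|L|\le\eps n$ since $|L_0|$ is a constant (in $|H|,\eps$) and $n\ge n_0$; each $|V_i|\ge 2n/3$ follows from $|W|\ge(\chi(H)-1)n-n/6-|L_0|$ together with $|V_j|\le n-1$ for the other parts; each $|V_i|<n$ is given by Lemma~\ref{StabLem2}; and the bounds on in-part blue-degree and between-part red-degree follow from the KST cleanup, as each $v\in V_i\subseteq W$ had global blue-degree at most $p|G|<\eps|V_i|$. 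The main obstacle is the blue-$K_{\chi(H)}$-to-blue-$H$ lifting in the second paragraph: orchestrating the interplay between the sparse blue structure on $W$ and the blue-heavy vertices of $L_0$ so as to produce $\chi(H)$ sufficiently large sets with controlled red bipartite structure is delicate, and is precisely where the large exponent in $n_0=(1/\eps)(200 k^2|H|/\eps)^{4k|H|}$ is dictated.
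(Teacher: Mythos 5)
Your opening step is incorrect. You assert that ``since $G$ contains no blue $H$, its blue subgraph contains no $K_{|H|,|H|}$,'' but this implication only holds when $H$ is bipartite: $K_{|H|,|H|}$ is bipartite, so it contains $H$ if and only if $H$ is bipartite. For a non-bipartite target such as $H=K_3$, the two-colouring whose blue edges form a large complete bipartite graph has no blue $K_3$ yet is saturated with blue copies of $K_{|H|,|H|}$, so the K\H{o}v\'ari--S\'os--Tur\'an cleanup fails: the set $L_0$ of vertices with blue-degree exceeding $p|G|$ can be almost all of $V(G)$. Since $\chi(H)\ge 3$ is exactly the nontrivial case (the bipartite case is handled elsewhere by Theorem~\ref{bipartite}), this gap is fatal as written. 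It is worth noting that the paper's proof never assumes the blue graph is $K_{|H|,|H|}$-free globally; it only applies Theorem~\ref{KSTBound}(a) \emph{inside} each candidate cluster $V_i'$, where the surrounding structure (few red edges to the other clusters) allows a blue $K_{|H|,|H|}$ to be greedily extended to a blue $K_{|H|,\ldots,|H|}$ and hence a blue $H$.

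A secondary concern, even if the cleanup could be salvaged: your verification of the no-red-$P_n$ hypothesis of Lemma~\ref{StabLem2} does not actually use the red $P_n$. Under a genuine minimum-red-degree condition on $W$ together with $|W|$ comfortably exceeding $n$, the greedy construction of $P_n^k$ goes through unconditionally (one does not extend a $P_n$ to a $P_n^k$; one builds $P_n^k$ from scratch). For $\chi(H)\ge 3$ this would contradict the no-red-$P_n^k$ hypothesis whenever the cleanup succeeded, which is another sign that the cleanup cannot succeed in the way you describe. The paper avoids all of this by not trying to apply Lemma~\ref{StabLem2} to a vertex subset of $G$: it first decomposes $V(G)$ into red $s$-cliques using the Ramsey bound $R(K_s,K_{|H|})$ (this is where avoiding blue $H$ is actually exploited), forms an auxiliary two-coloured graph $G^*$ on these cliques with red-adjacency witnessed by a red $K_{2k,2k}$, and applies Lemma~\ref{StabLem2} to $G^*$. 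A red $P_m$ in $G^*$ gives a red $P_n^k$ in $G$ by stepping between consecutive cliques through the $K_{k,k}$'s, and a blue $K_{\chi(H)}$ in $G^*$ gives a blue $H$ via Lemma~\ref{MyBlowUp}. This is a genuinely different decomposition from the one you propose.
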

\begin{proof} 
Suppose we are given a graph $H$, a natural number $k$, and a positive 
$\eps$ at most $1/2|H|^2$. We now choose $s = \lceil (128 k^2|H| / \eps)^{4k} \rceil$ and note that $n_0 \ge (3/\eps)(2s)^{|H|}$.



Take any $n\geq n_0$, and let $G$ be a two-coloured complete graph on
$(\chi(H)-1)n - n/6$ vertices which contains neither a red copy of $P^k_n$ nor a blue copy of $H$.

Since $R(K_s,H) \leq R(K_s,K_{|H|}) \leq \binom{s+|H|}{|H|} \le 
(2s)^{|H|} \le \eps n/3$ by the Erd\H{o}s-Szekeres bound~\cite{ErdSzek}, and $G$ contains no blue copy of $H$, it follows 
that any $(\eps n/3)$-vertex set in $G$ contains a red copy of $K_s$. 
Thus we can partition $V(G)$ into disjoint $s$-vertex red cliques $Q_1, 
Q_2,\ldots, Q_M$ and a leftover set $L_1$ with $|L_1|\leq \eps n/3$.

Let $m=\lceil n/s\rceil$. Observe that the number $M$ of cliques is at least
\begin{align*}
 \frac{(\chi(H) -1)n -n/6 - \eps n/3}{s}
   &\ge (\chi(H) -1)(m-1) - \frac{m}{6} - \frac{\eps m}{3}\\
   &\ge \left(\chi(H)-1-\frac{1}{3}\right)(m-1).
\end{align*}

We say that two red cliques $Q_i$ and $Q_j$, $i\neq j$, are
\textit{red-adjacent} if the induced bipartite graph $G[Q_i, Q_j]$
contains a red $K_{2k,2k}$, and \textit{blue-adjacent} otherwise. This gives
us an auxiliary two-coloured complete graph $G^*$ whose nodes are the $M$ red cliques.

Suppose there is a red-adjacent path $Q_{j_1}Q_{j_2} \cdots Q_{j_m}$ on $m$ vertices in $G^*$.  We claim that
the $sm \geq n$ vertices in these $m$ cliques of $G$ can then be covered by a red $k$th power of a path.  Since each
consecutive pair of cliques on the path is red-adjacent in $G^*$, we can find vertex-disjoint red copies of $K_{k,k}$
between each consecutive pair of cliques; now we construct a copy of $P_{sm}^k$ by traversing the sequence
of cliques in order, using the copies of $K_{k,k}$ to step from one clique to the next.  Therefore there is no
red copy of $P_m$ in $G^*$.

If $G^*$ contains a blue-adjacent clique with vertex set 
$\{Q_{j_1}, \dots, Q_{j_{\chi(H)}}\}$, then we can apply 
Lemma~\ref{MyBlowUp}, with $t=\chi(H)$, $d=|H|-1$, $r=2k$, and the 
given value of $s$, to the sets $V_i = Q_{j_i}$.  Each vertex set 
$V'_i$ has size $s - \lfloor 16k^2 s^{1-1/4k}|H| \rfloor\ge s/2\ge |H|$ 
(since $s \ge (64k^2|H|)^{4k}$), and so the auxiliary graph $G'$ 
contains a copy of $H$, and therefore by Lemma~\ref{MyBlowUp} there is
a blue copy of $H$ in $G$.  Therefore there is no blue copy of 
$K_{\chi(H)}$ in $G^*$.

Thus $G^*$ is a two-coloured complete graph on at least $(\chi(H)-1-\frac{1}{3})(m-1)$ vertices,
with neither a red $P_m$ nor blue $K_{\chi(H)}$.  By Lemma~\ref{StabLem2}, applied with $\alpha=1/3$, $G^*$ must
consist of $\chi(H)-1$ red cliques $C^*_1,\ldots,C^*_{\chi(H)-1}$, each with between $2m/3$ and $m-1$ nodes, joined
entirely by blue edges.  These red cliques in $G^*$ correspond to red \textit{clusters} $C_1,\ldots,C_{\chi(H)-1}$ in
$G$, where each cluster contains between $2n/3$ and $n-1$ vertices.

Our plan is to show that we can form the required sets $V_j$ by removing a small number of vertices from each
cluster $C_j$, placing these in the leftover set.  Since we only remove vertices from the clusters, the 
resulting $V_j$ will all have at most $n-1$ vertices.  As long as the leftover set contains at most 
$\eps n \le n/6$ vertices, it follows that each $V_i$ contains at least 
$$
(\chi(H)-1)n - \frac{n}{6} - (\chi(H)-2)(n-1) - \frac{n}{6} > \frac{2n}{3}
$$
vertices.  

Consider a clique $Q$ in the cluster $C_i$. Let $Q'$ be a clique in the 
cluster $C_j$, where $j\neq i$. Then $QQ'$ is a blue-adjacent edge of 
$G^*$: by definition there is no red $K_{2k,2k}$ between $Q$ and $Q'$ 
in $G$. By Theorem~\ref{KSTBound}(a) the number of red edges between $Q$ and $Q'$ is at most $2 s^{2-\frac{1}{2k}} \leq \eps^2 
s^2/6(\chi(H))^2$ -- since $s \ge (12 |H|^2 / \eps^2)^{2k}$.
It follows that there are at most
\[
 \frac{\eps^2 |C_i||C_j|}{6(\chi(H))^2}
\] 
red edges between $C_i$ and $C_j$ in $G$. In particular, at most 
$\eps |C_i|/3(\chi(H))^2$ vertices of $C_i$ can have more than 
$\eps |C_j|/2$ red neighbours in $C_j$.

For each $i$, let $V'_i$ be the set of those vertices of $C_i$ which 
have at most $\eps |C_j|/2$ red neighbours in $C_j$ for each $j\neq i$. 
We have $|C_i|-|V'_i|\leq \eps |C_i|/3\chi(H)$ for each $i$, and so the 
set $L_2 = \displaystyle\bigcup_{i=1}^{\chi(H)-1} (C_i \setminus V'_i)$
of discarded vertices has size at most $\eps n /3$.

Suppose that $V'_1$ contains more than $\eps^2 |V'_1|^2/6\chi(H)$ blue 
edges. This number is at least $2 |V'_1|^{2-1/|H|}$, since 
$|V'_1| \ge |C_1|/2 \ge n/3$, and we comfortably have 
$n \ge 3(12|H|/\eps^2)^{|H|}$. Thus, by Theorem~\ref{KSTBound}(a), 
there is a blue copy $H_1$ of the bipartite graph $K_{|H|,|H|}$ in 
$V'_1$.

We now show that such a blue bipartite graph inside $V'_1$ can be extended to a blue copy
of the $\chi(H)$-partite graph $K_{|H|,\ldots,|H|}$ in $G$, by taking a suitable set of
$|H|$ vertices from each other $V'_j$.

The number of vertices in $V'_2$ which send red edges to any vertex of $H_1$ is at most
$2|H|\eps |C_2|/2\leq |V'_2|-|H|$; in particular, there are $|H|$ vertices of $V'_2$ which
each send blue edges to every vertex of $H_1$. Thus we have a blue copy $H_2$ of the
tripartite graph $K_{|H|,|H|,|H|}$ in $V'_1 \cup V'_2$.

Repeating this argument for each $V'_3,\ldots,V'_r$ successively, using that, at each stage,
$(\chi(H)-1)|H|\eps |C_j|/2\leq |V'_j|-|H|$ -- this follows because $|V'_j| \ge |C_j|/2$ and
$\eps |H|^2 \le 1/2$ -- we find eventually a blue copy
$H_{\chi(H)-1}$ of the $\chi(H)$-partite graph $K_{|H|,\ldots,|H|}$, as claimed.
This graph contains $H$, which is a contradiction.

It follows that $V'_1$ contains at most $\eps^2 |V'_1|^2/6\chi(H)$ blue 
edges, and thus we can delete a set of at most $2\eps |V'_1|/3\chi(H)$ 
vertices of $V'_1$ to obtain a set $V_1$ such that, for each 
$v \in V_1$, $v$~has at most $\eps |V'_1|/2$ blue neighbours in $V_1$.


By symmetry, for each $2\leq i\leq \chi(H)-1$, one may remove 
$2\eps |V'_i|/3\chi(H)$ vertices from $V'_i$ to obtain a set $V_i$ such 
that each $v\in V_i$ has at most $\eps |V'_i|/2$ neighbours in $V_i$.
The set $L_3=\displaystyle\bigcup_{i=1}^{\chi(H)-1}(V_i\setminus V'_i)$ 
of vertices discarded in this step is again of size at most $\eps n /3$.

Now set $L=L_1 \cup L_2 \cup L_3$, so $|L| \le \eps n$.  Note also that 
each set $V_i$ has size at least $|C_i|/2$, so every vertex in each 
$V_i$ has at most $\eps |V_i|$ blue neighbours in $V_i$, and at most
$\eps |V_j|$ red neighbours in each other $V_j$.

Therefore the partition $V(G)=V_1\cup\cdots\cup V_{\chi(H)-1}\cup L$ is 
as desired.
\end{proof}

Given a graph $G$ possessing a partition as in Lemma~\ref{StabPnkH}, 
one can easily find (by the Sauer-Spencer Theorem~\cite{SauSpe}) in 
$V_i$ a red copy of any graph on $|V_i|$ vertices with maximum degree 
at most $1/2\epsilon$. However we would like to find a red copy of 
$P_n^k$, and our method of proof only gives sets $V_i$ of size 
$(1-\eps)n$.  So, in order to establish the exact value of 
$R(P_n^k, H)$, we will have to find a way either to incorporate the 
vertices of the leftover set $L$ into the sets $V_i$, or to show that, 
when this is not possible, $G$ contains a blue copy of $H$. To do this, 
we give an embedding lemma based on the Sauer-Spencer Theorem; we shall 
apply it in the case where $F$ is the red graph with vertex set 
consisting of one of the sets $V_i$ together with some vertices of $L$, 
and $J = P_n^k$.

\begin{lemma}\label{GenEmbed} 
 Given a natural number $\Delta \ge 1$ and any $0<\eps<1/(\Delta^2+4)$, 
 let $F$ be an $n$-vertex graph in which every vertex has degree at 
 least $3\Delta\eps n$, and all but at most $\eps n$ vertices have 
 degree at least $(1-2\eps)n$. Let $J$ be any $n$-vertex graph with 
 $\Delta(J)\leq\Delta$. Then $J\subset F$.
\end{lemma}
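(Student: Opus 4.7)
The plan is to separate out the few low-degree vertices of $F$ and handle them via an explicit matching to an independent set in $J$, and then to apply the Sauer--Spencer packing theorem to the rest.

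First I would let $B = \{v \in V(F) : \deg_F(v) < (1-2\eps)n\}$ be the "bad" vertices; the hypothesis gives $|B|\le \eps n$ while every $b\in B$ still has $\deg_F(b)\ge 3\Delta\eps n$. Since $\Delta(J^2)\le \Delta^2$ and $\eps < 1/(\Delta^2+4) \le 1/(\Delta^2+1)$, the graph $J^2$ has an independent set of size at least $|B|$, so I would fix $I \subseteq V(J)$ with $|I|=|B|$ that is independent in $J^2$. The crucial consequence is that $I$ is $J$-independent and that no vertex of $V(J)\setminus I$ has more than one $J$-neighbour in $I$. I would then choose any bijection $\phi\colon I\to B$ and set about extending it: for $j\in V(J)\setminus I$ with no $I$-neighbour, $\phi(j)$ may be any unused vertex of $V(F)\setminus B$; for $j$ with a unique $I$-neighbour $i$, $\phi(j)$ must additionally lie in the target set $N_F(\phi(i))\setminus B$, of size at least $(3\Delta-1)\eps n$.

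I would first embed the constrained vertices (those in $N_J(I)$) greedily: at each step the target set has size at least $(3\Delta-1)\eps n$, while the vertices to avoid (those already used in the target set, together with the $F$-non-neighbours of any already-embedded $J$-neighbour of $j$) total fewer than $(3\Delta-1)\eps n$, so a valid choice always exists. It then remains to embed $R = V(J)\setminus(I\cup N_J(I))$ into $Q = V(F)\setminus \phi(I\cup N_J(I))$ preserving adjacencies within $R$ and also from $R$ to the already-embedded $N_J(I)$. Here $\Delta(J[R])\le \Delta$ and, since every vertex of $Q$ is good, $\Delta(\overline{F[Q]})\le 2\eps n$, so the relevant Sauer--Spencer condition is $4\Delta\eps n < |R|$.

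The hard part will be the tight bookkeeping. The naive bound $|R|\ge (1-(\Delta+1)\eps)n$ only yields $\eps(5\Delta+1)<1$, which is stronger than the hypothesised $\eps<1/(\Delta^2+4)$ for small $\Delta$; to recover the stated bound I would exploit slack in the Sauer--Spencer swap argument, and use that $|N_J(I)|$ can be controlled more sharply than by $\Delta|I|$. A further subtlety is that the Sauer--Spencer bijection must also respect the list constraints coming from the already-placed $N_J(I)$, namely $\phi(r)\in N_F(\phi(r'))$ whenever $r \in R$ has a $J$-neighbour $r' \in N_J(I)$; since each good vertex has at most $2\eps n$ non-neighbours in $F$ and only at most $\Delta^2\eps n$ vertices of $R$ are constrained at all, I would accommodate this by first greedily placing that small constrained subset of $R$, and then invoking standard Sauer--Spencer on the unconstrained remainder.
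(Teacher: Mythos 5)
Your first two phases mirror the paper's exactly: identify the bad set $B$ of at most $\eps n$ vertices, choose $I\subseteq V(J)$ independent in $J^2$ with $|I|=|B|$, biject $I\to B$, and greedily extend to $N_J(I)$ by observing that each such vertex has a unique $I$-neighbour and hence a target set of size at least $(3\Delta-1)\eps n$. The difficulty is in the final step, and your proposed workaround has a genuine gap.

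You correctly identify that a direct appeal to Sauer--Spencer would require $4\Delta\eps n<|R|$, which the naive estimate $|R|\ge(1-(\Delta+1)\eps)n$ converts into $(5\Delta+1)\eps<1$, and that this does not follow from $\eps<1/(\Delta^2+4)$ when $\Delta\le 4$. But your proposed fix --- first greedily placing the small constrained subset of $R$ (the vertices of $R$ with $J$-neighbours in $N_J(I)$), then applying standard Sauer--Spencer to the unconstrained remainder --- reintroduces the same problem one level down: the \emph{unconstrained} remainder still has $J$-edges into both $N_J(I)$ and the newly placed constrained subset, and the Sauer--Spencer theorem as a black box produces a packing bijection with no control over which targets the endpoints of those crossing edges receive. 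So the second phase of your two-phase plan does not produce an embedding of $J$.

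The paper's way around this is not to split $R$ but to prove a frozen-prefix version of Sauer--Spencer inline. It takes $\psi\colon V(J)\to V(F)$ extending $\phi'$ and minimising the number of bad edges, and supposes $\psi(ab)\notin E(F)$ with (wlog) $b\notin I\cup\Gamma(I)$. Since $b\notin\Gamma(I)$, no $J$-neighbour of $b$ lies in $I$, so every neighbour $v$ of $b$ has $\psi(v)$ a good vertex of degree at least $(1-2\eps)n$; hence at least $(1-2\Delta\eps)n$ vertices of $F$ are common neighbours of all the $\psi(v)$. Also, since $\psi(b)$ is good, at most $2\Delta\eps n$ vertices of $J$ have a $J$-neighbour mapped outside $\Gamma(\psi(b))$. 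Subtracting, one finds a swap partner $c$ such that exchanging $\psi(b)\leftrightarrow\psi(c)$ strictly decreases the bad-edge count, contradicting minimality. The point is that this swap manipulates the whole bijection, so one never needs a lower bound on $|R|$; the binding constant is $4\Delta\eps<1$, which \emph{is} implied by $\eps<1/(\Delta^2+4)$ since $\Delta^2+4\ge 4\Delta$ for all $\Delta$. That is the "slack" you intuited was available, but it is obtained by redoing the swap argument with the prefix $\phi'$ frozen, not by any rearrangement of a black-box application of Sauer--Spencer.
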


\begin{proof}
 Let $P\subset V(F)$ be those vertices of $F$ with degree less than 
 $(1-2\eps)n$. Since $|P| \leq \eps n < n/(\Delta^2+1)$, we can find 
 a set $I\subset V(J)$ with $|I|=|P|$, and such that no two vertices of 
 $I$ are either adjacent or have any common neighbour in $J$ (we simply 
 choose vertices satisfying the conditions greedily).

 Let $\phi:I\rightarrow P$ be any bijection from $I$ to $P$. We 
 construct now a partial embedding $\phi'$ of $I$ together with all its 
 neighbours $\Gamma(I)$ into $F$, extending $\phi$. We do this by taking
 an enumeration $\{x_1,\ldots,x_m\}$ of $\Gamma(I)$ and, for each $i$ 
 in turn, choosing a vertex $y_i\in V(F)$ to be $\phi'(x_i)$ with the 
 following properties.

 First, we require that $y_i\notin \phi'(I\cup\{x_1,\ldots,x_{i-1}\})$.  At most $|I\cup\Gamma(I)| \leq (\Delta+1)\eps n$ vertices of $F$ fail 
 to satisfy this condition.

 Second, for any vertex $v$ of $I\cup\{x_1,\ldots,x_{i-1}\}$ which is 
 adjacent to $x_i$, $y_i$ must be adjacent to $\phi'(v)$. Observe that, 
 in $J$, there is exactly one vertex of $I$ adjacent to $x_i$ and at 
 most $\Delta-1$ other vertices (not in $I$) adjacent to $x_i$. It 
 follows that at most $(n- 3\Delta\eps n) + (\Delta-1) 2\eps n$ 
 vertices of $F$ fail to satisfy this condition.

 Since $n > (\Delta+1)\eps n + (n-3\Delta\eps n) + (\Delta-1)2\eps n$, 
 we will never become stuck, and the desired extension $\phi'$ exists.

 Now let $\psi$ be a bijection from $V(J)$ to $V(F)$ extending $\phi'$ 
 and such that $|\{ e \in E(J) : \psi(e) \notin E(F)\}|$ is
 minimised. We claim that the number of such `bad' edges is in fact 
 zero; that is, $\psi$ is an embedding of $J$ into $F$, as desired.

 Suppose this were false: then there is an edge $ab$ of $J$ such that
 $\psi(ab)\notin E(F)$. Because $\psi$ extends $\phi'$, and $\phi'$ is 
 an embedding, at least one of $a$ and $b$, say $b$, is not in 
 $I\cup\Gamma(I)$.

 Because $b\notin I\cup\Gamma(I)$, every neighbour $v$ of $b$ in $J$ 
 satisfies $d(\psi(v)) \geq (1-2\eps) n$. In particular, there are at 
 least $(1 - 2\Delta\eps) n$ vertices of $F$ which are adjacent to 
 $\psi(v)$ for every neighbour $v$ of $b$.

 The number of vertices of $J$ which have a neighbour in
 $\psi^{-1}\big(V(F)\setminus \Gamma(\psi(b))\big)$ is at most  
 $2\Delta\eps n$, since $|V(F)\setminus\Gamma(\psi(b))| \leq 2\eps n$ 
 and $J$ has maximum degree $\Delta$.

 Since $(1-2\Delta\eps)n - 2\Delta\eps n>0$, there is a vertex $c$ of 
 $J$ such that $\psi(c)$ is adjacent to $\psi(v)$ for each neighbour 
 $v$ of $b$ and such that $c$ has no neighbours in 
 $\psi^{-1}(V(F)\setminus \Gamma(\psi(b)))$; that is, for each
 neighbour $v$ of $c$, $\psi(v)$ is a neighbour of $\psi(b)$.

 Now let $\psi':V(J)\rightarrow V(F)$ be defined as follows. 
 \[
  \psi'(x)=
  \begin{cases}
   \psi(c) & \mbox{ if $x=b$,}\\
   \psi(b) & \mbox{ if $x=c$,}\\
   \psi(x) & \mbox{ if $x\neq b, c$.}
  \end{cases}
 \]
 In other words, we swap the targets under $\psi$ of $b$ and~$c$.

 By construction, any edge $e$ of $J$ which meets $b$ or $c$ is mapped 
 to an edge of $F$ by $\psi'$. Since $\psi'(x)=\psi(x)$ when 
 $x\neq b,c$, any bad edge of $\psi'$ not meeting $b$ or $c$ is also a 
 bad edge of~$\psi$; thus $\psi'$ has at least one fewer bad edge 
 (namely $ab$) than $\psi$, which contradicts minimality of $\psi$.
\end{proof}

Now we can give the proof of Theorem~\ref{2alwaysgood}.

\begin{proof}[Proof of Theorem~\ref{2alwaysgood}]

Given a graph $H$ and a natural number $k\ge 2$, set $\eps= 1/8k^2 |H|$ 
and $n_0 = (20k|H|)^{16k|H|} \ge (1/\eps)(200 k^2 |H|/\eps)^{4k|H|}$. 
Take any $n \ge n_0$, and let $G$ be a two-coloured complete graph on 
$(\chi(H)-1)(n-1)+\sigma(H)$ vertices.

By Lemma~\ref{StabPnkH}, if $G$ contains neither a red copy of $P_n^k$ 
nor a blue copy of $H$, then we have a partition 
$V(G) = V_1\cup\cdots\cup V_{\chi(H)-1}\cup L$ such that the following 
are true.
\begin{itemize}
  \item $|L|\leq\eps n$.
  \item $2n/3 \le |V_i| < n$ for each $i$.
  \item For each $i$ and $v\in V_i$, $v$ has at most $\eps |V_i|$ blue 
        neighbours in $V_i$.
  \item For each $i$ and $j$, no vertex in $V_i$ has more than 
        $\eps |V_j|$ red neighbours in $V_j$.
\end{itemize}

For each $i$, let $C_i$ be the set of vertices of $L$ which send at least 
$6k\eps n$ red edges to~$V_i$.

Suppose that for some $i$ we have $|V_i\cup C_i|\geq n$. An application 
of Lemma~\ref{GenEmbed} (with $F$ the red graph induced on 
$V_i \cup C_i$, $J=P_n^k$ and $\Delta = 2k$, noting that indeed 
$\eps < 1/(4k^2+4)$) shows that there is a red copy of $P_n^k$ in $G$, 
which is a contradiction. It follows that, for each~$i$, we have 
$|V_i\cup C_i|\leq n-1$; so
\[
 \bigg|V(G)\setminus\bigcup_{i=1}^{\chi(H)-1}(V_i\cup C_i)\bigg|
  \geq
 |V(G)|-(\chi(H)-1)(n-1)=\sigma(H)~.
\]

Thus there is a set $S$ of $\sigma(H)$ vertices in $L$, each of which 
sends at least $|V_i|-6k\eps n$ blue edges to $V_i$ for each $i$. Take 
a $\chi(H)$-colouring $c$ of $H$ in which the part with colour 
$\chi(H)$ has $\sigma(H)$ vertices. We construct a blue copy of $H$ in 
$G$ greedily as follows. Let $T_1$ be a set of $|c^{-1}(1)|$ vertices 
in $V_1$ each of which is blue-adjacent to every member of $S$; let 
$T_2$ be a set of $|c^{-1}(2)|$ vertices of $V_2$ each of which is 
blue-adjacent to every member of $S\cup T_1$, and so on. The number of 
vertices of $V_i$ which are red-adjacent to some member of $S$ is at 
most $\sigma(H)6k\eps n$; the number which are red-adjacent to some 
previously chosen vertex of $T_1\cup\cdots\cup T_{i-1}$ is at most 
$|H|\eps |V_i|$.

Since $\sigma(H) 6k\eps n + |H| \eps|V_i| \leq (6k+1)|H|\eps n < n/2 < 
|V_i|-|H|$, we never become stuck. We obtain a blue complete $\chi(H)$-partite graph which contains $H$. This completes the proof.
\end{proof}

The proof above is not the simplest way to obtain Theorem~\ref{2alwaysgood}: it is possible to work directly with the structure of the auxiliary graph $G^*$ constructed in the proof of Lemma~\ref{StabPnkH}. However, this proof lends itself to the generalisation required to prove Theorem~\ref{NExpAlwaysGood}.

Our upper bound on $n_0$ can be improved in two specific cases.  If we
wish to find $R(P_n,H)$ or $R(C_n,H)$, then we can change the definition of `red-adjacent' from requiring a red complete bipartite graph to requiring only two or three (respectively) disjoint red edges between the cliques. This then allows us to avoid the use of Lemma~\ref{MyBlowUp} and obtain results with cliques on $|H|$ or $2|H|$ vertices, and so (with a little care) for $n_0 = 2^{2|H|}$ or $n_0 = 2^{3|H|}$ respectively.  Our method cannot even get started for $n \le 2^{|H|/2}$, since Erd\H{o}s~\cite{ErdLow} has proved that the Ramsey number $R(K_\ell,K_\ell)$ is bigger than $2^{\ell/2}$.

A consequence of a lower bound of Spencer~\cite{SpLB} on $R(K_\ell, K_m)$ is that, for $k\ge 2$, there exists $c_k>0$ such that the formula $R(P_n^k,H)=(\chi(H)-1)(n-1)+\sigma(H)$ fails when
\[
 n < c_k\left(\frac{|H|}{\ln |H|}\right)^\frac{k^2+k-2}{2k-2}\,,
\]
since $K_{k+1}\subset P_n^k$; in general we really do require $n$ to be 
much bigger than $|H|$.

However, we conjecture that for $P_n$ and $C_n$ the formula holds for 
quite small values of $n$: when $n\geq |H|$.  We note that one case of 
this conjecture -- that $R(C_n,K_\ell)=(\ell-1)(n-1)+1$ holds for
$n\geq \ell$ -- is an old conjecture of Erd\H{o}s, Faudree, Rousseau 
and Schelp~\cite{EFRS}.  Even in this simple case the best known result 
is that the formula holds for $n\geq 4\ell+2$, due to 
Nikiforov~\cite{Nikiforov}.

\section{Poor expanders are always-good} \label{NExpFix}

 In this section we prove Theorem~\ref{NExpAlwaysGood}. We use the same 
 general approach as in the previous section, but here we make use of 
 the Szemer\'edi Regularity Lemma~\cite{SzReg}, together with a theorem 
 of B\"ottcher, Schacht and Taraz~\cite{BST}, in order to replace the 
 graph $P^k_n$ in Theorem~\ref{2alwaysgood} with a general graph $G$ of 
 bounded maximum degree and fairly small bandwidth.

 Given $\eps>0$, let $G$ be an $n$-vertex graph. For $U$ and $V$ 
 disjoint subsets of $V(G)$, let $e(U,V)$ denote the number of edges of 
 $G$ between $U$ and $V$, and define the \textit{density} $d(U,V)$ of 
 the pair $(U, V)$ as
 \[
   d(U,V)=\frac{e(U,V)}{|U||V|}~.
 \]
 We call $(U,V)$ an \textit{$\eps$-regular pair} if, for all pairs of 
 subsets $U'\subset U$ and $V'\subset V$ with $|U'|\geq\eps|U|$ and 
 $|V'|\geq\eps|V|$, we have $|d(U',V')-d(U,V)|<\eps$.

 Suppose we have a partition $V(G)=Z_0\cup Z_1\cup\cdots\cup Z_r$ 
 satisfying the following properties.
 \begin{itemize}
  \item $|Z_0|\leq\eps n$.
  \item For each $1\leq i\leq r$, there are at most $\eps r$ sets $Z_j$ 
  			such that $(Z_i,Z_j)$ is not $\eps$-regular.
  \item $|Z_1|=|Z_2|=\cdots=|Z_r|$.
 \end{itemize}
 Then we call this partition \textit{$\eps$-regular}. We call the 
 partition classes \textit{clusters} and we refer to $Z_0$ as the 
 \textit{exceptional cluster}. In his seminal work~\cite{SzReg},
 Szemer\'edi proved that every sufficiently large graph has an 
 $\eps$-regular partition in which the number of clusters is bounded by 
 a function of $\eps$ and is independent of the number of vertices. We 
 shall use this result in the following form.

 \begin{theorem}[Regularity Lemma]\label{SzLem}
  For any $\eps>0$ and $k_0$, there exist $K$ and $n_0$ such that, 
  whenever $n\geq n_0$ and $G$ is an $n$-vertex graph, $G$ possesses an 
  $\eps$-regular partition with between $k_0$ and $K$ clusters.
 \end{theorem}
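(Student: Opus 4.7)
The plan is to prove this classical result of Szemer\'edi by the standard energy-increment argument. I define the \emph{index} of a partition $\mathcal{P}=\{Z_0,Z_1,\ldots,Z_k\}$ of $V(G)$ by
\[
 q(\mathcal{P})
  =
 \sum_{i,j=1}^{k}\frac{|Z_i||Z_j|}{n^2}\, d(Z_i,Z_j)^2~,
\]
where $d(U,V) = e(U,V)/(|U||V|)$. Since $0 \le d \le 1$ and $\sum_{i,j}|Z_i||Z_j| \le n^2$, one has $q(\mathcal{P}) \in [0,1]$, and this boundedness is the engine of the whole argument.

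The key technical input is the defect Cauchy--Schwarz inequality, which shows that refining $\mathcal{P}$ never decreases $q$, and moreover that if a pair $(A,B)$ fails to be $\eps$-regular (witnessed by $A'\subset A$, $B'\subset B$ with $|A'|\ge\eps|A|$, $|B'|\ge\eps|B|$, and $|d(A',B')-d(A,B)|\ge\eps$), then splitting $A$ into $\{A', A\setminus A'\}$ and $B$ into $\{B', B\setminus B'\}$ strictly raises the $(A,B)$-contribution to $q$ by at least $\eps^4 |A||B|/n^2$. If $\mathcal{P}$ itself fails to be $\eps$-regular, then at least $\eps k^2$ of its pairs are irregular; performing all these refinements simultaneously — so each $Z_i$ is subdivided by the intersections of all witness sets involving it, producing at most $2^k$ cells — yields a refinement $\mathcal{P}'$ with $q(\mathcal{P}') \ge q(\mathcal{P}) + \eps^5$.

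The proof then iterates: start with an arbitrary near-equipartition of $V(G)$ into $k_0$ non-exceptional parts together with a small exceptional class $Z_0$; while the current partition is not $\eps$-regular, apply the above refinement, then re-equipartition each part into cells of a common size, depositing the residuals into $Z_0$. Because $q\le 1$ and grows by $\eps^5$ at every step, this terminates after at most $\lceil\eps^{-5}\rceil$ iterations. The main obstacle is the bookkeeping required to control both the growth of $Z_0$ and the total number of parts: since each refinement can multiply the number of parts by $2^k$, the final bound $K$ is a tower-type function of $\eps^{-1}$ and $k_0$, and $n_0$ must be chosen so that $n_0 \gtrsim K/\eps$, both to make the equipartition meaningful and to guarantee $|Z_0|\le\eps n$ throughout the iteration. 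The energy-increment core itself is a short convexity computation; all the difficulty sits in the quantitative accounting.
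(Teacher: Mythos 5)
The paper does not prove this theorem: it is Szemer\'edi's Regularity Lemma, stated as a black box and attributed to \cite{SzReg}, so there is no ``paper's own proof'' to compare against. Your sketch is the standard energy-increment argument for that classical result, and its core is correct: the index $q(\mathcal{P})$ is bounded in $[0,1]$, the defect Cauchy--Schwarz inequality gives monotonicity under refinement and a quantitative gain of order $\eps^4|A||B|/n^2$ when an irregular pair is split along its witnesses, the simultaneous refinement across all irregular pairs increments $q$ by $\Omega(\eps^5)$, and re-equipartitioning keeps the exceptional set small at the cost of a tower-type blow-up in the number of parts. All of that is right, and the quantitative bookkeeping you flag as ``the main obstacle'' is indeed where the work lies.

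One point to tighten, though. The version stated in the paper is the \emph{degree form} of the Regularity Lemma: for every cluster $Z_i$, at most $\eps r$ other clusters $Z_j$ form an irregular pair with it. Your energy-increment iteration, as written, terminates when fewer than (roughly) $\eps\binom{k}{2}$ pairs are irregular --- this is the \emph{global-count} form. The two are not literally the same: a partition can have only $O(\eps r)$ irregular pairs in total yet still violate the per-cluster condition if they all involve the same cluster, so your loop could halt on a partition that does not satisfy the paper's definition. To bridge this, run the argument with a smaller parameter $\eps'$ (say $\eps' = \eps^2/4$) to get the global form, then observe by a counting argument that at most $\eps r/2$ clusters can be involved in more than $\eps r$ irregular pairs, discard those clusters into $Z_0$ (increasing $|Z_0|$ by at most $\eps n/2$), and re-equipartition the remainder. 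With $\eps'$ chosen suitably small, this yields the degree form with exceptional set still of size at most $\eps n$. This cleaning step is standard but should be included, since the paper's Section~\ref{NExpFix} genuinely uses the per-cluster bound (to conclude that the cluster graph $R(F)$ has minimum degree at least $(1-\eps)r$).
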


 When we have an $\eps$-regular partition of a graph $G$, we associate 
 with it a \textit{cluster graph} $R(G)$ whose nodes are the clusters 
 of the partition (excluding $Z_0$) and whose edges correspond to 
 $\eps$-regular pairs of clusters -- possibly only those whose density is 
 above some given \textit{density threshold} $d$. One can easily prove that under 
 very simple conditions, if $R(G)$ contains a fixed graph $H$, then $G$ 
 must also contain $H$ as a subgraph. This is summarized in the 
 following lemma (see, for instance, Diestel~\cite{Diestel}).
 \begin{lemma}\label{fact:embed}
  For every $d>0$, $\Delta\geq 1$, there exists
  $\epsilon_{\subref{fact:embed}} = \epsilon_{\subref{fact:embed}}(d, \Delta) 
  \leq 1/2$ with the following property. 
  Let $G$ be an $n$-vertex graph and $R(G)$ be a cluster graph with 
  $r\leq d^\Delta n/4$ clusters, $\eps\leq\eps_{\subref{fact:embed}}$, 
  and with density threshold $d$. 
  Then, for every graph $H$ with 
  $\Delta(H)\leq\Delta$, if $R(G)$ contains $H$ as a subgraph, then $G$ 
  also contains $H$.
 \end{lemma}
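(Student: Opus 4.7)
The plan is to prove this standard embedding (or ``key'') lemma by a greedy vertex-by-vertex placement of $H$ inside $G$, guided by the copy of $H$ sitting inside $R(G)$. Concretely, I would fix an injective subgraph embedding $\pi:V(H)\to V(R(G))$ witnessing $H\subseteq R(G)$, identify each node of $R(G)$ with its associated cluster in $G$, enumerate $V(H)=\{v_1,\dots,v_h\}$, and build $\phi:V(H)\to V(G)$ with $\phi(v_i)\in\pi(v_i)$ by processing the vertices in this order.

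The bookkeeping device is a candidate set $C_j\subseteq\pi(v_j)$ for each unembedded $v_j$, initialised as $C_j=\pi(v_j)$ and, in general, consisting of those vertices of $\pi(v_j)$ that are $G$-adjacent to $\phi(v_i)$ for every already-embedded neighbour $v_i$ of $v_j$ in $H$. Write $L$ for the common cluster size; using the hypothesis $r\leq d^\Delta n/4$ together with $|Z_0|\leq\epsilon n$, we have $L\geq(1-\epsilon)n/r\geq 2/d^\Delta$. I would choose $\epsilon_{\subref{fact:embed}}$ small enough that both $2\Delta\epsilon<(d-\epsilon)^\Delta$ and $\epsilon L\geq 1$ hold; for instance $\epsilon_{\subref{fact:embed}}=(d/2)^{\Delta+1}/(2\Delta)$ suffices. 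The invariant to maintain is $|C_j|\geq(d-\epsilon)^{t_j}L$, where $t_j$ counts the already-embedded neighbours of $v_j$; since $t_j\leq\Delta$ this yields $|C_j|\geq(d-\epsilon)^\Delta L\geq\epsilon L$ throughout, so every candidate set is always large enough for the $\epsilon$-regularity definition to apply non-trivially.

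At step $i$ I would find $\phi(v_i)\in C_i$ avoiding two kinds of bad vertex. Previously placed images do not lie in $C_i$, because $\pi$ is injective and so each cluster receives at most one vertex of $H$. For each of the at most $\Delta$ unembedded neighbours $v_j$ of $v_i$, the $\epsilon$-regular pair $(\pi(v_i),\pi(v_j))$ has density at least $d$, and applied to the subsets $C_i$ and $C_j$ (both of size at least $\epsilon L$) it yields at most $\epsilon L$ vertices $u\in C_i$ with $|N_G(u)\cap C_j|<(d-\epsilon)|C_j|$. Summing over neighbours, at most $\Delta\epsilon L<(d-\epsilon)^\Delta L\leq|C_i|$ candidates are bad, so a valid $\phi(v_i)$ exists; choosing it and updating each $C_j$ by intersection with $N_G(\phi(v_i))$ preserves the invariant.

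The only real obstacle is the choice of constants: the single $\epsilon_{\subref{fact:embed}}$ must simultaneously force the invariant to survive $\Delta$ successive shrinkings of each candidate set and keep $\Delta\epsilon L$ strictly below $|C_i|$ at every step. Both requirements reduce to the inequality $\Delta\epsilon<(d-\epsilon)^\Delta$, which fixes $\epsilon_{\subref{fact:embed}}$ as a concrete function of $d$ and $\Delta$. The hypothesis $r\leq d^\Delta n/4$ plays no structural role beyond ensuring that $L$ is bounded below by an absolute constant depending on $d$ and $\Delta$, so that the condition $\epsilon L\geq 1$ needed for $\epsilon$-regularity to bite on the candidate sets is automatic.
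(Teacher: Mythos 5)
Your proof is correct and is essentially the standard greedy embedding argument; the paper does not prove the lemma itself but cites Diestel, whose proof proceeds along exactly these lines (maintain candidate sets inside the clusters, use $\eps$-regularity at each step to bound the number of bad choices, and use the cluster-size lower bound coming from $r\le d^\Delta n/4$ to keep the candidate sets non-empty). Your observation that injectivity of the embedding $\pi$ makes the ``already used'' count trivially zero is a valid simplification of the usual bookkeeping.
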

 To handle bounded degree graphs with small bandwidth, we require the 
 following theorem, essentially due to B\"ottcher, Schacht and
 Taraz~\cite{BST}.

 \begin{theorem} [B\"ottcher, Schacht and Taraz~\cite{BST}]  
 \label{BSTThm}
%

 For any $\mu, \gamma > 0$ and for any natural numbers $\chi$ and $\Delta$ 
 there exists $\eps_{\subref{BSTThm}}>0$ such that, for all 
 $0<\eps\leq  \eps_{\subref{BSTThm}}$, there is a $K_{\subref{BSTThm}}$ such 
 that, for all $K\geq K_{\subref{BSTThm}}$, there exist $\beta>0$ and 
 $n_{\subref{BSTThm}}$ such that the following holds for all 
 $n\geq n_{\subref{BSTThm}}$ and $0\leq\eta< 1$.

 Let $F$ be an $n$-vertex graph, and $R(F)$ the cluster graph 
 corresponding to an $\eps$-regular partition of $F$ with $r\leq K$ 
 parts whose edges correspond to $\eps$-regular pairs of density at 
 least $\gamma$. Suppose that in $R(F)$ there is a copy of 
 $P^{\chi-1}_{(\eta+\mu) r}$ with the further property that every 
 $\chi$-clique is contained in a $(\chi+1)$-clique of $R(F)$. Then 
 whenever $G$ is an $\eta n$-vertex graph with maximum degree $\Delta$, 
 chromatic number $\chi$ and bandwidth $\beta n$, we have $G\subset F$.
\end{theorem}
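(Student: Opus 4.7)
The plan is to follow the standard route of B\"ottcher, Schacht and Taraz: combine a block decomposition of $G$ (from the bandwidth hypothesis) with a careful assignment of blocks to $\chi$-cliques along the $P^{\chi-1}_{(\eta+\mu)r}$ in $R(F)$, and then finish with the Blow-up Lemma. First, I would fix the quantifiers in the right order: given $\mu,\gamma,\chi,\Delta$, pick $\eps_{\subref{BSTThm}}$ much smaller than $\gamma$ and than the parameter $\eps_{\mathrm{BL}}(d,\Delta)$ required by the Blow-up Lemma for density $d=\gamma/2$; then, given $\eps$, fix $K_{\subref{BSTThm}}$ large compared to $1/\mu$; finally, given $K$, let $\beta$ be tiny compared to $\mu/(Kr)$, which will govern how fine the block decomposition of $G$ has to be.

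Next I would partition $V(G)$ into consecutive blocks $W_1,\ldots,W_s$ using a linear ordering of $G$ witnessing $\bw(G)\le\beta n$, where $s$ is chosen so that each block has about $\eta n/s$ vertices and $s$ is a suitable multiple of $(\eta+\mu)r$; the bandwidth condition ensures edges of $G$ only join vertices in the same block or in consecutive blocks. I would also fix a proper $\chi$-colouring of $G$ which is ``balanced'' on each block (vertex colours repeat with period $\chi$ along the ordering), so that within a block the colour classes have nearly equal size; when necessary this can be arranged by first padding $G$ to a graph whose every block contains a $K_\chi$, using that $\chi(G)=\chi$. Using the path $Q_1,\ldots,Q_{(\eta+\mu)r}$ of $\chi$-cliques in $R(F)$, I would then map each block $W_i$ into the clique $Q_{\sigma(i)}$ for a monotone assignment $\sigma$, so that the $c$-th colour class of $W_i$ is to be embedded in the $c$-th cluster of $Q_{\sigma(i)}$. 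Because consecutive cliques $Q_j,Q_{j+1}$ share $\chi-1$ clusters, this respects the block-to-block edges of $G$.

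The central technical issue is size matching: each cluster of $R(F)$ has a fixed size $n/r$, while the number of $G$-vertices assigned to it by the naive scheme will only be approximately right. To fix this I would exploit the hypothesis that every $\chi$-clique of $R(F)$ extends to a $(\chi+1)$-clique, using the extra cluster attached to each $Q_j$ as a \emph{buffer}: small numbers of vertices whose removal does not break the colouring or the bandwidth-based adjacencies (typically vertices whose neighbourhoods are contained in one block) can be rerouted into these buffers, absorbing the discrepancies. The $\mu r$ extra length of the path (compared to the $\eta r$ clusters strictly needed) provides additional slack for these adjustments. Once the assignment is exact, each cluster-to-cluster pair used for the embedding is $\eps$-regular of density $\ge\gamma$, and the buffer vertices allow us to ensure super-regularity where required. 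The Blow-up Lemma (or equivalently an application of Lemma~\ref{MyBlowUp}) then produces the embedding $G\subset F$.

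The hard part will be the balancing argument of the third paragraph: proving that, after the natural block-to-clique assignment, one can indeed redistribute $O(\beta n)\cdot r$ vertices into the buffer clusters without spoiling either the $\chi$-colouring, the bandwidth-based ``edges only between consecutive blocks'' property, or the super-regularity needed to invoke the Blow-up Lemma. This is precisely where the combination of small bandwidth (so that few vertices sit near block boundaries and need to be moved), the chromatic slack provided by $(\chi+1)$-cliques, and the length slack $\mu r$ in the path must be orchestrated simultaneously; the rest of the proof is a careful but standard application of the Regularity/Blow-up machinery.
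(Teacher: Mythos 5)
Your proposal reconstructs the B\"ottcher--Schacht--Taraz framework, which is indeed the route the paper follows -- though the paper itself does not reprove the theorem but rather cites Lemma 8 and the proof of Theorem 2 of~\cite{BST}, observing that the present (non-spanning) statement is an easier case. The block decomposition from the bandwidth ordering, the assignment of blocks to consecutive $\chi$-cliques of $P^{\chi-1}_{(\eta+\mu)r}$, the use of the $(\chi+1)$-th cluster as a buffer, and the closing appeal to the Blow-up Lemma all match~\cite{BST}, so the high-level plan is sound.

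Two points are off, however. First, your parenthetical ``or equivalently an application of Lemma~\ref{MyBlowUp}'' is wrong: Lemma~\ref{MyBlowUp} applies to pairs of vertex sets with \emph{no} red $K_{r,r}$ between them, which is a far stronger condition than being $\eps$-regular of density at least $\gamma$ in the red graph -- a regular pair of density $\gamma$ is saturated with complete bipartite subgraphs in both colours, so Lemma~\ref{MyBlowUp} gives nothing here. One really needs the Koml\'os--S\'ark\"ozy--Szemer\'edi Blow-up Lemma~\cite{Blowup}, as in~\cite{BST}. Second, you identify the balancing/absorption step as ``the hard part,'' but this misjudges where the difficulty actually lies. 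The difficulty in~\cite{BST} comes from the requirement of a \emph{spanning} embedding, where cluster sizes must be matched exactly. In the present statement one embeds only $\eta n$ vertices into clusters whose union holds at least $(\eta+\mu/2)n$ vertices, so every cluster can be left with a $\mu/4$-fraction of unused room. This slack makes the balancing essentially automatic (and super-regularity easy to arrange by discarding a few bad vertices from each cluster), which is precisely the paper's point that ``obtaining an embedding covering a $(1-\mu)$-fraction of $F$ \ldots is relatively trivial.'' Your sketch is therefore aiming at a harder target than necessary, and the technical machinery you flag as the crux is where the hypotheses of this particular theorem provide the most relief.
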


To be specific, Lemma 8 of~\cite{BST} provides a graph homomorphism with
certain additional desirable properties from $G$ to $R(F)$, in particular that
no vertex of $R(F)$ is the target of `too many' vertices of $G$. Since in our
situation we seek to embed only $\eta n$ vertices into $(\eta+\mu)r$ clusters
(and therefore we have at least $(\eta+\mu/2)n$ vertices in the union of the
clusters of the $P^{\chi-1}_{(\eta+\mu) r}$ in $R(F)$ but only $\eta n$ in
$G$, as opposed to the requirement in~\cite{BST} for a spanning embedding), we
may in particular presume that for each $i$ the homomorphism allocates at most
$(1-\mu/4)|Z_i|$ vertices of $G$ to the cluster $Z_i$. One can then complete
the embedding of $G$ into $F$ by using the Blow-up Lemma of Koml\'os,
S\'ark\"ozy and Szemer\'edi~\cite{Blowup} (again following the method
of~\cite{BST}, Proof of Theorem 2). It should be emphasized that the major
source of difficulty in~\cite{BST} is the requirement for a spanning
embedding: obtaining an embedding covering a $(1-\mu)$-fraction of $F$
(essentially our situation) is relatively trivial.

 Whenever we use this, we will in fact find a copy of 
 $P^{\chi}_{(\eta+\mu)r}$ in $R(F)$; this certainly contains a copy of 
 $P^{\chi-1}_{(\eta+\mu)r}$ in which every $\chi$-clique extends to a 
 $(\chi+1)$-clique. For convenience, we presume the parameter 
 $n_{\subref{BSTThm}}$ is chosen to be at least as large as required for 
 Theorem~\ref{SzLem} to provide an $\eps$-regular partition.

 In our proof of Theorem~\ref{NExpAlwaysGood}, we shall use Theorem 
 \ref{BSTThm} with $\eta=1/(\chi(H)-1)$. Since it is required that 
 $\eta < 1$, we need to treat separately the case when $H$ is bipartite.
It turns out that, in this case, we need no restriction on the bandwidth
of $G$. 

For the reader's convenience, we restate Theorem~\ref{bipartite}.  


\begin{reptheorem}{bipartite}
 For every bipartite graph $H$ and natural number $\Delta$, there exists 
 $n_0$ such that, for all $n\geq n_0$, every connected $n$-vertex graph 
 $G$ with $\Delta(G)\leq \Delta$ satisfies 
 $R(G,H)=(\chi(H)-1)(n-1)+\sigma(H) = n + \sigma(H) -1$.
\end{reptheorem}

\begin{proof}
 Fix a bipartite graph $H$ and a positive integer $\Delta$. Set 
 $\eps= 1/(8\Delta^2\sigma(H))$ and $n_0=|H|(16/\eps^2)^{|H|}$. For 
 $n>n_0$, let $G$ be a connected $n$-vertex graph with maximum degree 
 at most $\Delta$, set $N=n-1+\sigma(H)$, and let $F$ be a $2$-coloured 
 complete graph on $N$ vertices.

 If the blue subgraph of $F$ does not contain $K_{|H|, |H|}\supset H$, 
 by Theorem \ref{KSTBound}(a), $F$ contains at most 
 $2N^{2-\frac{1}{|H|}}$ blue edges. Hence, the set $X$ of vertices with blue degree at
 least $2\eps n$ has $|X| \le \eps n$.

 Let $Y\subset X$ be the vertices of $X$ which have blue degree greater 
 than $N-n/(2\sigma(H))$. Any $\sigma(H)$ vertices of $Y$ have a common 
 blue neighbourhood containing at least 
 $N-\sigma(H) n/(2\sigma(H)) \geq n/2>|H|$ vertices, and therefore, if
 $|Y|\geq\sigma(H)$, there exists a blue copy of $H$. (the graph $H$ has a
 bipartition with one partite set of size $\sigma(H)$ and the other of size 
 $|H|-\sigma(H)$.) Thus, $|Y|\leq\sigma(H)-1$. 

 We apply Lemma~\ref{GenEmbed} with $\Delta$, $\eps=1/(8\Delta^2\sigma(H)) < 
 1/(\Delta^2+4)$, and $J=G$ to the red graph induced on 
 $V(F)\setminus Y$. All the assumptions hold: the set $V(F)\setminus Y$ has 
 at least $N-|Y|\geq n$ vertices, every vertex has red degree at least 
 $n/(2\sigma(H))\geq 3\Delta\eps n$, and all but at most $\eps n$ 
 vertices (those in $V(F)\setminus X$) have red degree at least 
 $n-2\eps n$. Hence, $F$ contains a red copy of $G$.
\end{proof}

 Our general strategy for proving Theorem~\ref{NExpAlwaysGood} is very 
 similar to that in the proof of  Theorem~\ref{2alwaysgood}. We will need to be 
 able to apply a version of our stability result, Lemma~\ref{StabPnkH}, to cluster graphs. 
 This means that we need the following variant of Lemma~\ref{StabPnkH}, 
 where our two-coloured graphs are not complete, but rather have minimum 
 degree $(1-\eps)n$ for some $\eps>0$ (whose size we may choose as 
 small as we desire). 

\begin{lemma}[Modified Lemma \ref{StabPnkH}] \label{StabPnkH:modified} 
 Let $H$ be a graph, $k$ a natural number, and $\eps'$ a positive
 constant at most $1/2|H|^2$. There exists $\eps_{\subref{StabPnkH:modified}}<\eps'$
 such that for every positive $\eps< \eps_{\subref{StabPnkH:modified}}$ there is 
 an $n_{\subref{StabPnkH:modified}}$ for which the following holds. 
 If $n\geq n_{\subref{StabPnkH:modified}}$ 
 and $G$ is a two-coloured graph on $N\geq (\chi(H)-1)n- n/6$ vertices with 
 $\Delta(\bar{G})<\eps n$, which contains neither a red copy of $P_n^k$
 nor a blue copy of $H$, then there is a partition 
 $V(G)=V_1\cup \cdots\cup V_{\chi(H)-1}\cup L$ with the following properties.

\begin{itemize}
  \item $|L|\leq\eps' n$.
  \item For each $i$, $2n/3\leq |V_i|<n$.
  \item For each $i$ and $v\in V_i$, $v$ has at most $\eps' |V_i|$ blue 
        neighbours in $V_i$.
  \item For each $i$ and $j$, no vertex in $V_i$ has more than 
        $\eps' |V_j|$ red neighbours in $V_j$.
\end{itemize}
\end{lemma}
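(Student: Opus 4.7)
The plan is to reduce the modified lemma to the proof of Lemma~\ref{StabPnkH} by working with the completion $\tilde{G}$ of $G$ obtained by colouring each non-edge of $G$ red. Then $\tilde{G}$ is a two-coloured complete graph whose blue edges are exactly those of $G$, so that a blue copy of $H$ in $\tilde{G}$ is automatically a blue copy of $H$ in $G$. The strategy is to use $\tilde{G}$ whenever we seek a blue structure, and the genuine red edges of $G$ whenever we seek a red structure. Given $\eps'$, I set $s=\lceil(128k^2|H|/\eps')^{4k}\rceil$ as in the original proof, choose $\eps_{\subref{StabPnkH:modified}}=\eps_{\subref{StabPnkH:modified}}(\eps',k,|H|)$ sufficiently small (essentially of order $1/s^2$), and take $n$ sufficiently large depending on $\eps$.

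First, a noisy Ramsey variant---the standard recursive Ramsey argument, where each branching step may lose up to $\eps n$ vertices to non-neighbours of the branching vertex---shows that any subset of $V(G)$ of size at least $R(K_s,K_{|H|})\bigl(1+O(\eps n)\bigr)$ contains a genuine red $K_s$ or a blue $K_{|H|}\supseteq H$. Since the latter is forbidden by hypothesis, this yields a partition $V(G)=Q_1\cup\cdots\cup Q_M\cup L_1$ with each $Q_i$ a genuine red $K_s$ in $G$ and $|L_1|\leq\eps' n/3$, provided $\eps_{\subref{StabPnkH:modified}}$ is small enough that the Ramsey error is absorbed.

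I now define the auxiliary graph $G^*$ on the cliques $Q_i$ with a three-way classification: a pair $(Q_i,Q_j)$ is \emph{red-adjacent} if there is a genuine red $K_{2k,2k}$ in $G[Q_i,Q_j]$, \emph{blue-adjacent} if $\tilde{G}[Q_i,Q_j]$ contains no red $K_{2k,2k}$, and \emph{missing} otherwise. The unchanged argument shows that $G^*$ contains no red $P_m$, since a red-adjacent path assembles a genuine red $P_n^k$ in $G$ via red $K_{k,k}$'s. For the no-blue-$K_{\chi(H)}$ claim, pairwise blue-adjacency of $\chi(H)$ cliques $Q_{j_1},\dots,Q_{j_{\chi(H)}}$ is exactly the hypothesis of Lemma~\ref{MyBlowUp} applied to $\tilde{G}$; the conclusion is a blue copy of $H$ in $\tilde{G}$, hence in $G$, contradicting the hypothesis. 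Each missing pair in $G^*$ must contain at least one non-edge of $G$ in its bipartite, so the missing degree of each vertex of $G^*$ is at most $s\eps n$, which for $\eps<\eps_{\subref{StabPnkH:modified}}$ is at most $\eps^*(M-1)$ for some $\eps^*$ small enough to apply Lemma~\ref{StabLem2} with $\ell=\chi(H)$ and $\alpha=1/3$.

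Applying Lemma~\ref{StabLem2} to $G^*$ then yields a partition of the cliques into $\chi(H)-1$ red-cliques of the prescribed sizes, translating to clusters $C_1,\dots,C_{\chi(H)-1}$ in $G$. The cleanup phase follows the original proof: the K\"ov\'ari--S\'os--Tur\'an bound applied to (blue-adjacent) sub-clique pairs controls the red edges between $C_i$ and $C_j$ (the few missing cluster-pairs contribute only a lower-order term absorbable into $\eps'$), and the blue-internal-edges argument (find a blue $K_{|H|,|H|}$ by K\"ov\'ari--S\'os--Tur\'an and extend to blue $K_{|H|,\dots,|H|}$) runs in $G$ directly, with an additional $|H|\eps n$ non-edge correction at each extension step, negligible compared to $|V'_j|\geq 2n/3$ for $\eps$ small. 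The main obstacle is verifying that the missing edges in $G^*$ are sparse enough to apply Lemma~\ref{StabLem2}, which is what fixes the order of magnitude of $\eps_{\subref{StabPnkH:modified}}$.
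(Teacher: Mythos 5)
Your proposal follows essentially the same route as the paper's sketch: replace the Erd\H{o}s--Szekeres step by a noise-tolerant Ramsey argument to find genuine red $s$-cliques, then run the auxiliary-graph argument allowing missing pairs so that the full strength of Lemma~\ref{StabLem2} (which already permits a non-complete $G^*$) can be invoked, followed by the same cleanup. The one genuine difference is the classification of non-red-adjacent pairs. The paper declares $(Q_i,Q_j)$ missing only when it contains at least $2\sqrt{\eps}|Q_i||Q_j|$ non-edges, which yields the clean bound $\Delta(\bar{G^*})<\sqrt{\eps}\,v(G^*)$ and lets $\eps_{\subref{StabPnkH:modified}}$ be taken independent of $s$; the cost is that a blue-adjacent pair may still have a few non-edges, so a little extra care is needed before Lemma~\ref{MyBlowUp} (stated for complete graphs) can be applied. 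You instead declare $(Q_i,Q_j)$ blue-adjacent only when the \emph{completion} $\tilde G$ already has no red $K_{2k,2k}$ across the pair, which makes the Lemma~\ref{MyBlowUp} step immediate, but weakens the missing-degree bound to $s\eps n$ and hence forces $\eps_{\subref{StabPnkH:modified}}$ to be of order $1/s^2$. Since the lemma allows $\eps_{\subref{StabPnkH:modified}}$ to depend on $\eps',k,|H|$ (and hence on $s$), this is perfectly acceptable, and the cleanup-phase corrections you indicate (missing pairs contributing $O(\eps s^2 n^2)$ red edges between clusters, and $O(|H|\eps n)$ losses per extension step) are indeed absorbable for $\eps$ of that order. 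Two small slips: the noisy Ramsey bound should read something like $R(K_s,K_{|H|})+(s+|H|)\eps n$ rather than $R(K_s,K_{|H|})\bigl(1+O(\eps n)\bigr)$; and the non-neighbour bound for $G^*$ needs to be compared against $\eps^*(m-1)$ with $m\approx n/s$ (the parameter in Lemma~\ref{StabLem2}), not $\eps^*(M-1)$ -- the conclusion is the same up to the factor $\chi(H)-1$.
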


\begin{proof} (Sketch)
This is an entirely straightforward modification of the proof of 
Lemma~\ref{StabPnkH}. There are two changes which must be made.

First, we can
no longer use the Erd\H{o}s-Szekeres bound $R(K_s,K_{|H|})\leq \binom{|H|+s}{|H|}$ to find red $s$-cliques. It is easy to prove (albeit with slightly worsened bounds) that, if $n$ is 
large enough, then any two-coloured graph with minimum degree 
$(1-\eps)n$ contains either $K_s$ or $K_{|H|}$.


Second, the
auxiliary graph $G^*$ must be defined slightly differently. Just as before, we
say two cliques $Q_i$ and $Q_j$ are red-adjacent if the induced bipartite graph
$G[Q_i,Q_j]$ contains a red $K_{2k,2k}$. If however $Q_i$ and $Q_j$ are not
red-adjacent, then we have two possibilities. If there are at least
$2\sqrt{\eps}|Q_i||Q_j|$ non-edges of $G$ in $G[Q_i,Q_j]$ then $Q_i$ and $Q_j$
are non-adjacent in $G^*$. If $Q_i$ and $Q_j$ are not red-adjacent, and the
number of non-edges of $G$ in $G[Q_i,Q_j]$ is less than
$2\sqrt{\eps}|Q_i||Q_j|$, then $Q_i$ and $Q_j$ are blue-adjacent. Observe that
if there is a vertex $Q_i$ with $\sqrt{\eps}v(G^*)$ non-neighbours in $G^*$,
then there are at least $2\sqrt{\eps}|Q_i|(v(G^*)-1)|Q_i|$ nonedges in $G^*$
adjacent to $Q_i$, so a vertex $v\in Q_i$ of minimum degree in $G$ has more
than $\eps n$ non-neighbours in $G$. This contradiction yields
$\Delta(\bar{G^*})<\sqrt{\eps}v(G^*)$.

 The rest of the proof goes through unchanged: note that since $G^*$ is now not
 a complete graph, we use the full strength of Lemma~\ref{StabLem2} in this
 setting.
\end{proof}
 
We are now ready to prove the main result of this section.  
 
\begin{proof}[Proof of Theorem~\ref{NExpAlwaysGood}]
 Fix a graph $H$ with $\chi(H)\geq 3$ and an upper bound $\Delta$ on 
 the maximum degree of~$G$.  We need to show that, if $\beta>0$ is 
 sufficiently small, then for all sufficiently large $n$, every 
 connected $n$-vertex graph $G$ with $\Delta(G)\leq \Delta$ and 
 $\bw(G)\leq\beta n$ satisfies $R(G,H)=(\chi(H)-1)(n-1)+\sigma(H)$.

 We set $\gamma=\frac{1}{200\Delta^2|H|}$ and 
 $\mu=\frac{1}{12\chi(H)^2}$, and choose $\eps'$ such that 
 $$
  \eps'< \min\left\{\epsilon_{\subref{fact:embed}}(1/2, \Delta),
                   \frac{1}{2|H|^2},  
                   \frac{1}{\Delta^2+4}
            \right\}
      \ \text{ and } \ 
  4(\chi(H)+3)\eps'+8\gamma
  <
  \min\left\{\frac{1}{\Delta^2+4},\frac{1}{(6\Delta+2)|H|}\right\}
 $$ 
 hold. We then choose  $\eps$ such that $\eps<\eps'$, 
 $\eps<\eps_{\subref{BSTThm}}(\mu, \gamma, \chi(H), \Delta)$, and 
 $\eps<\eps_{\subref{StabPnkH:modified}}(H,\chi(H),\eps')$. 
 
 Let $k_0$ be such that $k_0\geq 2/\eps$, 
 $k_0> K_{\subref{BSTThm}}(\mu,\gamma,\chi(H),\Delta,\eps)$,
 and $k_0 > n_{\subref{StabPnkH:modified}}(H,\chi(H),\eps',\eps)$.  Let $K$ and $n_0$ be  
 constants such that the conclusion of Theorem~\ref{SzLem} holds, 
 with parameters $\eps$ and $k_0$, so in particular 
 $K\geq k_0> K_{\subref{BSTThm}}(\mu,\gamma,\chi(H),\Delta,\eps)$.  Now let
 $\beta>0$ and $n_{\subref{BSTThm}}\ge n_0$ be constants such that the conclusion of
 Theorem~\ref{BSTThm} holds.  

 Take any $n\geq \max( n_{\subref{BSTThm}}, K 2^{\Delta+2})$, and set $\eta=1/(\chi(H)-1)\leq 1/2$. 
 Let $G$ be any connected $n$-vertex graph with $\bw(G)\leq\beta n$
 and $\Delta(G)\leq \Delta$, so $\chi(G)\leq \Delta+1$.

 Let $F$ be a two-coloured complete graph on 
 $(\chi(H)-1)(n-1)+\sigma(H)$ vertices. Our aim is to prove that $F$ 
 contains either a red copy of $G$ or a blue copy of $H$.

 By applying Theorem~\ref{SzLem} to the red graph of $F$, we obtain an
 $\eps$-regular partition, and hence a cluster graph $R(F)$ with some 
 number $r$ of vertices, $k_0 \le r \le K$. By moving the vertices of at most $\chi(H)$ 
 clusters to the exceptional set $Z_0$, we may assume that 
 $r=(\chi(H)-1)m$ for some integer $m$. When $(A,B)$ is a pair of 
 clusters which is $\eps$-regular, we have an edge $AB$ in $R(F)$.  
 This graph $R(F)$ is very nearly complete: each vertex has degree 
 at least  $(1-\eps)r$.
 When the density of red edges in $(A,B)$ is more than $\gamma$, we 
 colour $AB$ red, otherwise we colour it blue. Notice that if $AB$ is 
 blue, then the density of blue edges in $(A, B)$ is at least~$1/2$.

 If there is a blue copy of $H$ in $R(F)$, then, by Lemma~\ref{fact:embed} with
 $d=1/2$, $F$ contains a blue copy of $H$, and we are done.
 
 Also, if there is a red copy of $P^{\chi(H)}_{(\eta+\mu)r}$ in $R(F)$, 
 then, by Theorem~\ref{BSTThm}, there is a red copy of $G$ in $F$, and 
 again we are done.

 Observe that $(\chi(H)-1)(\eta+\mu)r-(\eta+\mu)r/6 < r$. Thus if
 $R(F)$ contains neither a red $P^{\chi(H)}_{(\eta+\mu)r}$ nor a blue 
 $H$, then, by Lemma~\ref{StabPnkH}, we have a partition 
 $V(R(F))=W_1\cup\cdots\cup W_{\chi(H)-1}\cup L'$ with the following 
 properties.
\begin{itemize}
  \item $|L'|\leq\eps' (\eta+\mu)r$.
  \item For each $i$, $2(\eta+\mu)r/3 \le |W_i| < (\eta+\mu)r$. 
  \item For each $i$ and $v\in W_i$, $v$ has at most $\eps' |W_i|$ blue
  neighbours in $W_i$.
  \item For each $i$ and $j$, no vertex in $W_i$ has more than $\eps' |W_j|$ red
  neighbours in $W_j$.
\end{itemize}

 Consider the cluster $A\in W_i$, and let $j\neq i$.  Set 
 $$
 \delta
  = 
 2\,(4\eps'+2\eps(\chi(H)-1)+4(\gamma+\eps)),
 $$
 and let $A'$ be the set of vertices in $A$ which send more than  
 $\delta n/2$ red edges to $W_j$. Suppose that $|A'|\geq \eps|A|$.
 
 In $R(F)$, $A$  sends red edges to at most $\eps' |W_j|$ clusters of $W_j$. To these clusters, $A'$ sends at most $\eps' |W_j|\cdot |A'|(N/r)$ red edges.
 
 There are further at most $\eps r$ clusters of $W_j$ which are not 
 adjacent in $R(F)$ to $A$, corresponding to non-$\eps$-regular pairs 
 in $F$. To these clusters, $A'$ sends at most $\eps r\cdot |A'|(N/r)$  
 red edges.
 
 The remaining clusters of $W_j$ are linked by blue edges in $R(F)$ to 
 $A$. Hence, their red density is at most $\gamma$ and they are 
 $\eps$-regular. Using $\eps$-regularity, the total number of red edges 
 from $A'$ to these clusters is bounded by 
 $|W_j|\cdot (\gamma+\eps)|A'|(N/r)$.

 Using that $|W_i|< (\eta+\mu)r=m+\mu r< 2m$ and 
 $N/r = n/m + \sigma(H)/r< 2n/m$, we obtain
 $$
  e_{\rm red}(A',\bigcup W_j)
   <
  (4\eps'+2\eps(\chi(H)-1)+4(\gamma+\eps))|A'|n=\delta |A'|n/2.
 $$
 On the other hand, it follows from the definition of $A'$ that
 $e_{\rm red}(A,\bigcup W_j)>\delta n |A'|/2$, which is a 
 contradiction. Hence, $|A'|<\eps |A|$.



 Since this holds for each cluster of $W_i$ and every $j\neq i$, we can 
 remove at most $(\chi(H)-2)\eps|\bigcup W_i|$ vertices from 
 $\bigcup W_i$ to obtain a set $V_i$ of vertices of $F$ which sends at 
 most $\delta n/2$ red edges to $\bigcup W_j$ for every $j\neq i$.

 Since $|V_i|>n/2$ for each $i$, we certainly have that for each 
 $i\neq j$, every vertex in $V_i$ has at most $\delta|V_j|$ red 
 neighbours in $V_j$.

 Given $i$, by an identical argument to that in the proof of
 Lemma~\ref{StabPnkH}, if there are more than $\delta^2|V_i|^2/6$ blue 
 edges in $V_i$, then $V_i$ contains a blue copy of $K_{|H|,|H|}$ which 
 we can extend to a blue copy of $H$ in $F$. It follows that we can 
 remove at most $2\delta|V_i|/3$ vertices from $V_i$ to obtain a set 
 $V'_i$ such that every vertex in $V'_i$ has at most $\delta|V_i|/2$ 
 blue neighbours in $V_i$. Thus, for each $i$, every vertex in $V'_i$ 
 has at least $(1-\delta)|V'_i|$ red neighbours in $V'_i$.

 We can now complete the proof in an identical fashion to the proof of
 Theorem~\ref{2alwaysgood}. We let the set $L$ contain all those 
 vertices of $F$ which are in no set $V'_i$. We let $C_i$ be the set of 
 vertices in $L$ which send at least $3\Delta\delta n$ edges to $V'_i$. 
 By Lemma~\ref{GenEmbed}, if for some $i$ we have 
 $|V'_i\cup C_i|\geq n$, then we can find a red copy of $G$ in $F$.

 But if for each $i$ we have $|V'_i\cup C_i|\leq n-1$, then we can
 find a set $S$ of $\sigma(H)$ vertices of $L$ which each sends at least
 $(1-3\Delta\delta)n$ blue edges to each set $V'_i$. As in the proof of
 Theorem~\ref{2alwaysgood}, we can now construct a blue copy of $H$ in 
 $F$ greedily. This completes the proof.
\end{proof}

\section{Multi-colour problems}\label{Mcol}

 In this section, we prove Theorem~\ref{MAGisAG}, stating that if  
 $\mathcal{G}$ is an always-good family of graphs, then $\mathcal{G}$ 
 is also multicolour-always-good; that is, given $r\geq 2$ and graphs
 $H_1,\ldots,H_r$, there are integers $W$ and $Z$ (not depending on 
 $\mathcal{G}$) such that, for all sufficiently large 
 $G\in\mathcal{G}$, $R(G,H_1,H_2,\ldots,H_r)=W(|G|-1)+Z$.

 Our first step is to give explicit definitions of the constants $W$ 
 and $Z$ as the solutions to two further Ramsey-type problems involving 
 $H_1, \dots, H_r$.

 The first is a variant of the standard Ramsey problem.  We define the 
 \textit{homomorphism Ramsey number} $R_{\rm hom}(H_1,\ldots,H_r)$ to 
 be the smallest $N$ such that, if $F$ is any $r$-coloured complete 
 graph on $N$ vertices, there exists a colour $i$ such that there is a graph 
 homomorphism from $H_i$ into the $i$th colour subgraph of $F$. Then we 
 let $$W=R_{\rm hom}(H_1,\ldots,H_r)-1.$$  It is clear that
 $R_{\rm hom}(H_1,\ldots,H_r)\leq R(H_1,\ldots,H_r)$, and sometimes 
 this inequality is sharp -- if all the graphs are complete graphs -- 
 but in general it is not; for example, $R_{\rm hom}(C_3,C_5)=5$ 
 although $R(C_3,C_5)=9$. It may be of independent interest to 
 investigate the properties of $R_{\rm hom}$ further.

 Given a graph $G$, a vertex $x$ of $G$, and an integer $m\geq 1$, the 
 $m$-\textit{blow-up} of $G$ at $x$ is the graph obtained by replacing 
 $x$ with an independent set $\{x_1,\ldots,x_m\}$, each vertex of which 
 has the same adjacencies as $x$ has in $G$. The $m$-\textit{blow-up} 
 of the graph $G$ is the graph obtained by blowing up by $m$ at each 
 vertex of~$G$. It is clear that there is a homomorphism of $H_i$ into 
 the $i$th colour subgraph of $G$ if and only if there is some 
 $m$-blow-up of $G$ whose $i$th colour subgraph contains $H$. Hence 
 $R_{hom}$ can be also defined in terms of blow-ups.

 We define $Z$ as the smallest natural number $N$ with the following  
 property. For any labelled graph $F$ on $W+N$ vertices, with all  
 edges incident to at least one of the first $W$ vertices present, if 
 $F$ is $r$-coloured and $F'$ is obtained from $F$ as the $m$-blow-up 
 of the first $W$ vertices for some sufficiently large 
 $m=m(H_1,\ldots,H_r)$,
 then there is some $i$ such that $H_i$ is contained within the
 $i$th colour subgraph of $F'$.

 It is clear that, for any connected $n$-vertex graph $G$ with 
 $n\geq Z$, $R(G,H_1,\ldots,H_r)\geq W(n-1)+Z$. Indeed, let $F$ be a 
 labelled $r$-coloured graph on $W+Z-1$ vertices demonstrating that $Z$ 
 cannot be replaced by $Z-1$. We obtain an $r$-coloured complete graph 
 $F'$ on $W(n-1)+Z-1$ vertices by taking the $(n-1)$-blow-up of the first $W$ 
 vertices of $F$ and then replacing every non-edge with a red edge. 
 Then certainly there is no red copy of $G$ in $F'$, and, by the 
 definition of $Z$, for each $i\in [r]$, there is no copy of $H_i$ 
 in~$F'$.

We now prove Theorem~\ref{MAGisAG}.

\begin{proof} 
 Given an always-good class of graphs $\mathcal{G}$, let $r\geq 2$ be 
 any integer and $H_1,\ldots,H_r$ any collection of $r$ graphs. Let 
 $W$, $Z$, and $m=m(H_1,\ldots,H_r)$ be defined as above.

 Suppose that $\ell$ is some integer sufficiently large that the 
 following procedure succeeds.

 Let $F$ denote a complete $(W+1)$-partite graph 
 $K_{\ell, \ell,\ldots, \ell,Z}$, with an $r$-colouring of its edges. Let 
 $(U_1,V_1), \ldots, (U_{\binom{W}{2}},V_{\binom{W}{2}})$ be an 
 enumeration of all the pairs of parts of $F$, excluding the last part. 
 Set $\Gamma_0$ equal to the set of vertices in these first $W$ parts.

 Now, for each $1\leq i\leq\binom{W}{2}$ in turn, consider the complete 
 bipartite $r$-coloured subgraph $J_i$ of $F$ induced by the pair 
 $(U_i \cap \Gamma_{i-1}, V_i \cap \Gamma_{i-1})$. Let $B_i$ be a 
 maximum-size monochromatic complete balanced bipartite subgraph of 
 $J_i$, and form $\Gamma_i$ by deleting from $\Gamma_{i-1}$ all 
 vertices in $U_i \cap \Gamma_{i-1}$ and $V_i \cap \Gamma_{i-1}$ except 
 those in $B_i$.

 Observe that, at each step $i$, the $r$-coloured complete bipartite 
 graph $J_i$ 
 must have one colour present with edge density at least $1/r$, and 
 thus the K\"ov\'ari-S\'os-Tur\'an Theorem (Theorem~\ref{KSTBound}) 
 provides a lower bound on the size of the monochromatic complete 
 balanced bipartite subgraph $B_i$ found at step $i$. In particular, by 
 choosing $\ell$ sufficiently large, we may conclude that, at the end 
 of the process, the set $U \cap \Gamma_{\binom{W}{2}}$ contains at 
 least $r^Zm$ vertices for every part $U$ of $F$, except the last one.

 Now let $F'$ be the $r$-coloured $(W+1)$-partite graph obtained from 
 $F$ by removing all vertices $\Gamma_0-\Gamma_{\binom{W}{2}}$ -- in 
 other words, we take the $r$-coloured complete $W$-partite graph 
 induced on $\Gamma_{\binom{W}{2}}$ and add back the last part of $F$.  
 By construction, the edges between any two of the first $W$ parts of 
 $F'$ form a monochromatic complete bipartite graph.  Let $F''$ be 
 obtained by deleting from the first $W$ parts of $F'$ a minimum set of 
 vertices such that the edges from any vertex in the $(W+1)$-st part of 
 $F''$ to any of the first $W$ parts are monochromatic.  By choice of  
 $\ell$, the first $W$ parts of $F$ each still contain at least $m$ 
 vertices. 
 By the definition of $W$ and $Z$, for some $i$, a copy of~$H_i$ of 
 colour $i$ is contained in $F''$, and, hence, in $F$.

 Now, because $\mathcal{G}$ is always-good, in particular it is  
 $H$-good for $H = K_{\ell,\ell,\ldots,\ell,Z}$. Note that 
 $\chi(H) = W+1$ and $\sigma(H) = Z$. Thus, whenever $G\in\mathcal{G}$ 
 is sufficiently large, and $F$ is any $\{$red$\}\cup [r]$-coloured 
 complete graph on $W(|G|-1)+Z$ vertices, either $F$ contains a red 
 copy of $G$, or $F$ contains an $r$-coloured copy of 
 $H = K_{\ell,\ell,\ldots,\ell, Z}$, and hence a copy of $H_i$ of 
 colour $i$ for some $i\in [r]$.
\end{proof}

In the proof above, the required size of $\ell$ is a tower of height $O(W^2)$, and in turn $W$ can be very large in comparison to the small graphs---for instance, if each of the small graphs is the clique $K_s$, then $W=2^{\Omega(s)}$.

As an illustration of the use of Theorem~\ref{MAGisAG}, we show how to find the Ramsey numbers for a collection of
odd cycles, provided they are suitably long.

\begin{corollary}  For any odd integers $\ell_1,\ldots,\ell_r$, with $\ell_s> 2^s$ for each $1\le s\le r$, and
every sufficiently large $n$,
$$
 R(C_n,C_{\ell_1},\ldots,C_{\ell_r})=2^r(n-1)+1.
$$
\end{corollary}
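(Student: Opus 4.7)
The plan is to deduce the identity from Theorem~\ref{MAGisAG} applied to the family $\mathcal{G} = \{C_n : n\ge 3\}$. Each $C_n$ has bandwidth~$2$, so $\mathcal{G}\subseteq \mathcal{B}_2$, which is always-good by Theorem~\ref{2alwaysgood}. Hence $\mathcal{G}$ is multicolour-always-good, and there exist integers $W$ and $Z$ (depending only on $\ell_1,\dots,\ell_r$) such that $R(C_n,C_{\ell_1},\dots,C_{\ell_r}) = W(n-1) + Z$ for all sufficiently large $n$. The task reduces to showing $W = 2^r$ and $Z = 1$.

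To handle $W+1 = R_{\mathrm{hom}}(C_{\ell_1},\dots,C_{\ell_r})$, I use that for odd $\ell$ there is a homomorphism $C_\ell \to F$ if and only if $F$ contains an odd cycle of length at most~$\ell$ (since odd girth controls the lengths of odd closed walks, and $\ell-g$ is even once $g$ is odd). For the lower bound $W\ge 2^r$, I would exhibit an $r$-colouring of $K_{2^r}$ in which every colour class is bipartite: identify the vertices with $\{0,1\}^r$ and colour edge $uv$ by the smallest coordinate in which $u$ and $v$ differ, so that colour class $s$ is a disjoint union of complete bipartite graphs. For the upper bound $W \le 2^r$, I plan an induction on $r$ showing that any $r$-colouring of $K_{2^r+1}$ contains, in some colour $s$, an odd cycle of length at most $2^s + 1$. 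The base $r=1$ is the triangle in $K_3$; for the inductive step, if colour~$r$ is non-bipartite it already contains an odd cycle of length at most $|V| = 2^r+1$, while if colour~$r$ is bipartite one side of the bipartition has at least $2^{r-1}+1$ vertices and induces an $(r-1)$-coloured complete graph to which induction applies. The hypothesis $\ell_s > 2^s$ (with $\ell_s$ odd) gives $\ell_s \ge 2^s + 1$, so the cycle found has length at most $\ell_s$.

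For the lower bound $Z\ge 1$, blowing up the binary colouring of $K_{2^r}$ keeps every colour class bipartite, so no $C_{\ell_s}$ appears in any colour. For the upper bound $Z \le 1$, I note that for $N=1$ the graph $F$ in the definition of $Z$ is necessarily $K_{2^r+1}$ (there are no ``missing'' potential edges since only a single vertex lies outside the initial $W = 2^r$). By the previous upper-bound lemma some colour $s$ contains an odd cycle of length $k \le 2^s + 1 \le \ell_s$ in $F$. To produce a monochromatic $C_{\ell_s}$ in the $m$-blow-up $F'$, observe that at most one vertex of the cycle is unblown, so at least $k-2\ge 1$ edges of the cycle have both endpoints blown up; picking one such edge $vw$, its blow-up is a monochromatic $K_{m,m}$ in colour~$s$, through which one can reroute by a zigzag path of odd length $\ell_s - k + 1$ (well-defined since $\ell_s - k$ is even, and realisable once $m$ exceeds $(\ell_s-k)/2+1$), yielding a monochromatic $C_{\ell_s}$ of exact length $\ell_s$.

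The main obstacle is the induction proving $W \le 2^r$, where the hypothesis $\ell_s > 2^s$ enters decisively to guarantee that the odd cycle produced at level $s$ is short enough to admit a homomorphism from $C_{\ell_s}$. The other steps are either a direct construction (the binary colouring, used for both lower bounds) or a routine parity-respecting extension (the zigzag for $Z\le 1$), and the overall reduction to Theorem~\ref{MAGisAG} is immediate once cycles are recognised as bandwidth-$2$ graphs.
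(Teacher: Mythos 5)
Your proof is correct and takes essentially the same route as the paper: reduce via Theorem~\ref{MAGisAG} using that cycles have bandwidth~$2$, compute $W+1 = R_{\mathrm{hom}}(C_{\ell_1},\dots,C_{\ell_r}) = 2^r+1$ by the binary colouring for the lower bound and an induction peeling off a bipartite colour class for the upper bound, and then show $Z=1$ by noting that $F$ on $W+1$ vertices is complete and rerouting a short monochromatic odd cycle through the blow-up of one of its edges. The paper phrases the rerouting slightly differently (choosing a homomorphism that maps only one vertex of $C_{\ell_i}$ to the unblown vertex), but that is a cosmetic difference, not a different argument.
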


\begin{proof}
 Since the family of cycles is always-good by 
 Theorem~\ref{2alwaysgood}, and thus by Theorem~\ref{MAGisAG} 
 multicolour-always-good, we need only solve the two auxiliary 
 Ramsey-type problems to find $W$ and $Z$.

 We first need to show that 
 $R_{\rm hom}(C_{\ell_1},\ldots,C_{\ell_r}) =2^r+1$. For the lower 
 bound, it is a standard result that the edge-set of $K_{2^r}$ can be 
 partitioned into $r$ colour classes so that no colour class contains 
 an odd cycle.   
 For such a colouring, there is no homomorphism from the odd cycle 
 $C_{\ell_i}$ to the $i$th colour class, for any $i$.  

 For the upper bound, we proceed by induction on $r$. The result is 
 trivial for $r=1$, so we suppose $r>1$. For any $r$-coloured  
 $K_{2^r+1}$, either there is an odd cycle $Q$ in colour $r$, in which 
 case there is a homomorphism from $C_{\ell_r}$ to $Q$ -- here we use 
 the assumption that $\ell_r \ge 2^r +1$ -- or the graph of edges 
 coloured by $r$ is bipartite, in which case one of its parts contains 
 an $(r-1)$-coloured $K_{2^{r-1}+1}$, and the result follows by 
 induction.

 Secondly, we need to show that $Z=1$. Any labelled graph $F$ on 
 $W+Z=R_{\rm hom}(C_{\ell_1},\ldots,C_{\ell_r})$ vertices, with all  
 edges incident to at least one of the first $W$ vertices present, is 
 complete. Hence, if $F$ is $r$-coloured, then, for some $i$, there is 
 a homomorphism from $C_{\ell_i}$ into the $i$th colour subgraph of $F$.
 Thus, $F$ contains an odd cycle $Q$ of length at most $\ell_i$ in 
 colour $i$.

 For $m= \max(\ell_1,\ldots,\ell_r)$, let $F'$ be obtained from $F$ as 
 the $m$-blow-up of the first $W$ vertices. Given an odd cycle $Q$ 
 in colour $i$, we have freedom to choose a homomorphism which maps 
 only one vertex of $C_{\ell_i}$ to a chosen vertex of $Q$. By 
 $m$-blowing-up the remaining vertices of $Q$, we obtain enough room to 
 embed the remaining vertices of $C_{\ell_i}$.
%
\end{proof}

 A well-known problem raised by Bondy and Erd\H os~\cite{BonEr} is to 
 determine the $r$-colour Ramsey number $R(C_n,\dots,C_n)$, when $n$ is 
 odd. The lower bound $2^{r-1}(n-1) +1$ is pointed out in that paper, 
 and this is widely believed to give the correct value of the Ramsey 
 number provided $n$ is sufficiently large -- our result above may be 
 seen as giving a weak support for that conjecture. The conjecture was 
 proved in the case $r=3$ by Kohayakawa, Simonovits and  
 Skokan~\cite{KSS3cyc}: for $n$ odd and sufficiently large, 
 $R(C_n,C_n,C_n) = 4n-3$.

\section{Powers of paths and cycles against themselves}\label{SecPnkCnk}

Our purpose in this section is to give both upper and lower bounds on the Ramsey numbers
$R(P^k_n,P^k_n)$ and $R(C^k_n,C^k_n)$, for fixed $k$ and large $n$.  In the next section, we will move on to 
consider general graphs with bounded maximum degree and limited bandwidth.  

We start with a construction giving a lower bound better than the one from Burr's construction (Lemma~\ref{LowBd}).  
We begin by assuming that $n$ is a multiple of $k+1$: for convenience we restate Theorem~\ref{LowerBound}.


\begin{reptheorem}{LowerBound}
 For $k\geq 2$,
 \[
   R(C^k_{(k+1)t}, C^k_{(k+1)t}),\, R(P^k_{(k+1)t}, P^k_{(k+1)t})
    \geq
   t(k+1)^2-2k~.
  \]
\end{reptheorem}

 Note that $\chi(C^k_{(k+1)t}) = \chi(P^k_{(k+1)t}) = k+1$, while
 $\sigma(C^k_{(k+1)t}) = \sigma(P^k_{(k+1)t}) =t$, so Lemma~\ref{LowBd} 
 gives the lower bound $k[(k+1)t - 1] + t$ on both Ramsey numbers, 
 which is $kt-k$ below the value in Theorem~\ref{LowerBound}.

\begin{proof}
 We colour $K_{t(k+1)^2-2k-1}$ as follows. Partition $[t(k+1)^2-2k-1]$ 
 into disjoint sets $A_1,\ldots,A_k$ each on $kt-1$ vertices, 
 $B_1,\ldots,B_k$ each on $2t-1$ vertices, and $C$ on $t-1$ vertices.

 Now colour edges as follows. Within each set $A_i$ we have only red 
 edges. Within each set $B_i$ we have only blue edges. Between two sets 
 $A_i$ and $A_j$, $i\neq j$, we have only blue edges; between $B_i$ and 
 $B_j$, $i\neq j$, only red edges.

 For each $i$, we have only red edges between $A_i$ and $B_i$, while 
 between $A_i$ and $B_j$ for $i\neq j$ we have only blue edges. 
 Finally, we take any colouring within $C$, and join all its vertices 
 in blue to every $A_i$ and in red to every $B_i$.

 In the red graph, any copy of $P^k_m$, for any $m>k$, with one vertex 
 in a set $A_i$ must lie entirely within $A_i\cup B_i$, $A_i$ is too 
 small to contain $k$ colour classes of $P^k_{(k+1)t}$, hence, $B_i$ 
 must contain two vertices from two distinct vertex classes, which is 
 impossible because all its edges are blue. But if the sets $A_i$ are
 not to be used, then an entire colour class of $P^k_{(k+1)t}$ would 
have to lie in $C$, which is again too small.

The argument showing that there is no copy of $P^k_{(k+1)t}$ in the blue graph is very similar.  Suppose there
is such a copy $Q$, and suppose first that it includes some vertex $v$ of some $B_i$.  The set $T$ of the next
$k$ vertices on $Q$ forms a blue clique adjacent to $v$, so there is at most one vertex of $T$ in each of the $A_j$
with $j\neq i$, and so at least one vertex of $T$ in $B_i$.  Thus $Q$ lies within $B_i \cup \bigcup_{j\neq i} A_j$.
Moreover, at most $k-1$ out of each set of $k+1$ consecutive vertices on $Q$ are in the $A_j$, and so there
are at least $2t$ vertices of $Q$ in $B_i$, which is impossible.  As before, if the sets $B_i$ are not
used for $Q$, then an entire colour class of $Q$ would lie in $C$, which is again too small.
\end{proof}

 For $k>2$, the construction above can be generalised.  First we take 
 an auxiliary red-blue-coloured graph $J$, which is a copy of 
 $K_{k,k}$, with parts $\{a_1, \dots, a_k\}$ and $\{b_1, \dots, b_k\}$, 
 with the property that each $a_i$ is incident with at least one blue 
 edge and each $b_i$ with at least one red edge. Each such $J$ will 
 give us a different construction of a two-coloured graph on 
 $t(k+1)^2-2k-1$ vertices with no monochromatic $P^k_{(k+1)t}$, as 
 follows. Take disjoint vertex sets $A_1,\ldots, A_k, B_1,\ldots, B_k, 
 C$, with $|C|=t-1$. The set $A_i$ has $(\ell_i+1)t - 1$ vertices, 
 where $\ell_i$ is the number of blue edges incident with $a_i$ in $J$, 
 and the set $B_i$ has $(m_i+1)t-1$ vertices, where $m_i$ is the number 
 of red edges incident with $b_i$ in $J$.  The total number of vertices 
 is always $t(k+1)^2-2k-1$. 

 As before, within each $A_i$ we have red edges, and between different 
 $A_i$ we have blue edges, while within each $B_i$ we have blue edges, 
 and between different $B_i$ we have red edges. The colouring inside 
 $C$ is arbitrary, and its vertices are joined in blue to every $A_i$ 
 and in red to every $B_i$. The edges between $A_i$ and $B_j$ all have 
 the colour of the edge between $a_i$ and $b_j$ in~$J$. The proof that 
 such a two-coloured graph contains no monochromatic $P^k_{(k+1)t}$ is 
 similar to that in Theorem~\ref{LowerBound}.

 When $k+1$ does not divide $n$, still Burr's construction can be 
 improved upon. For powers of paths, the adjustments required are 
 small, so we concentrate on powers of cycles.

 \begin{theorem}
  For $k\geq 2$ and any $1\leq r\leq k$,
  \[
    R(C^k_{(k+1)t+r},C^k_{(k+1)t+r})\geq (k+1)[(k+2)t+2r-2]+r~.
  \]
\end{theorem}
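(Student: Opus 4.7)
My plan is to construct a two-colouring of $K_{N-1}$, where $N=(k+1)[(k+2)t+2r-2]+r$ and $n=(k+1)t+r$, such that neither colour contains a copy of $C^k_n$. The construction generalises the one in the proof of Theorem~\ref{LowerBound}. Before giving it, I would record the parameters of $C^k_n$: since $1\le r\le k$ forces $(k+1)\nmid n$, we have $\chi(C^k_n)=k+2$, $\alpha(C^k_n)=t$, and (for $t$ large enough) $\sigma(C^k_n)=r$, so Lemma~\ref{LowBd} already gives the Burr-type bound $(k+1)(n-1)+r$; the target exceeds this by $(k+1)(t+r-1)$, and this extra space is what the $B_i$'s will supply.

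I would partition the vertex set into $k+1$ \emph{blocks} of the form $A_i\cup B_i$ plus a small residue set $C$, with sizes (for example) $|A_i|=kt+r-1$, $|B_i|=2t+r-1$, $|C|=r-1$, summing to $N-1$. The colouring is: every $A_i$ is a red clique, every $B_i$ a blue clique; between blocks, $A_iA_j$ and $A_iB_j$ for $i\ne j$ are blue, while $B_iB_j$ for $i\ne j$ and $A_iB_i$ are red; the vertices of $C$ are joined blue to each $A_i$, red to each $B_i$, and have blue internal edges.

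For the red case, I would argue exactly as in Theorem~\ref{LowerBound}: any red copy of $C^k_n$ meeting some $A_i$ is trapped in $A_i\cup B_i$ because the only red edges leaving $A_i$ go to $B_i$; since $B_i$ is red-independent and $\alpha(C^k_n)=t$, at most $t$ vertices of the cycle lie in $B_i$, so $|A_i|\ge n-t=kt+r$, contradicting the chosen size $|A_i|=kt+r-1$. If no $A_i$ is used, the cycle is contained in the red complete $(k+2)$-partite graph with parts $B_1,\ldots,B_{k+1},C$, and any proper $(k+2)$-colouring of $C^k_n$ has every class of size at least $\sigma(C^k_n)=r>|C|$, so the smallest class cannot fit in $C$.

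The main obstacle will be ruling out a blue $C^k_n$: the blue subgraph is substantially richer, because every $B_i$ is a blue clique attached in blue to every $A_j$ with $j\ne i$, which gives a ``flex region'' able to absorb vertices of $C^k_n$ that do not serve as an independent class. The technical heart is a case analysis of how $V(C^k_n)$ can be distributed into sets $V_{A_i}$, $V_{B_i}$, $V_C$ subject to the non-adjacency constraints (each $V_{A_i}$ must be $C^k_n$-independent, and $V_{B_i}$ must be $C^k_n$-non-adjacent to $V_{A_i}$, to every $V_{B_j}$ with $j\ne i$, and to $V_C$), and to show that these constraints, together with the small sizes $|C|=r-1$ and $|V_{A_i}|\le t$, force a contradiction. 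I anticipate that this analysis, and possibly a small modification of the $A_iB_j$ colouring pattern (in the spirit of the auxiliary bipartite graph $J$ used in the $r=0$ generalisation for $k>2$), is what it takes to close the blue case and so deliver the claimed bound.
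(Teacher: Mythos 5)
Your construction (parts $A_1,\dots,A_{k+1}$ of size $kt+r-1$, $B_1,\dots,B_{k+1}$ of size $2t+r-1$, $C$ of size $r-1$, coloured as in Theorem~\ref{LowerBound}) is exactly the paper's, and your red argument is the paper's intended one. But you are right to be uneasy about the blue case, and the gap there is real. In Theorem~\ref{LowerBound} the blue ``trap'' works because there are only $k$ blocks: for $v\in B_i$ the next $k$ vertices of $Q$ form a blue clique and there are only $k-1$ other sets $A_j$ to receive them, so at least one more must land in $B_i$, and iterating gives at least two $B_i$-vertices in every $k{+}1$ consecutive positions. Here there are $k+1$ blocks, hence $k$ sets $A_j$ with $j\neq i$, so the $k$ vertices after $v$ can all avoid $B_i$; the trap gives nothing, and the averaging bound $|Q\cap B_i|\ge 2t$ disappears.

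In fact the coloured graph \emph{does} contain a blue $C^k_n$, $n=(k+1)t+r$, so as written the construction cannot prove the stated inequality. Map positions $0,1,\dots,r$ of $C^k_n$ into $B_1$ (these are pairwise within distance $k$, but $B_1$ is a blue clique), and map positions $r+1,r+2,\dots,n-1$ cyclically through $A_2,A_3,\dots,A_{k+1},A_1,A_2,\dots$ with period $k+1$. One checks directly: each $A_j$-fibre consists of positions that are $\ge k+1$ apart, hence independent in $C^k_n$, and no $A_j$-fibre exceeds $t\le |A_j|$; the $B_1$-fibre has $r+1\le |B_1|$ positions; the $k$ positions on either side of the $B_1$-segment fall into $A_2,\dots,A_{k+1}$ only (so the red $A_1B_1$-pairs are never used), and the first $A_1$-position is $r+k+1$, at distance $>k$ from every $B_1$-position. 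Every edge of $C^k_n$ is thus sent to a blue pair, giving a blue copy. (For a concrete small case, with $k=2$, $r=1$, $t=5$ this exhibits a blue $C^2_{16}$ in the proposed colouring of $K_{60}$.) So your instinct that ``possibly a small modification of the $A_iB_j$ colouring pattern'' is needed is not merely a possibility: the blue case is genuinely broken for this colouring, and the paper's remark that it ``follows the proof of Theorem~\ref{LowerBound}'' cannot be taken at face value.
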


 The lower bound on $R(C^k_{(k+1)t+r},C^k_{(k+1)t+r})$ coming from 
 Lemma~\ref{LowBd} is $(k+1)[(k+1)t + r - 1] + r$.

 \begin{proof}
  We colour $K_{(k+1)((k+2)t+2r-2)+r-1}$ as follows. Partition  
  $[(k+1)((k+2)t+2r-2)+r-1]$ into disjoint sets $A_1,\ldots,A_{k+1}$ 
  each on $kt+r-1$ vertices, $B_1,\ldots,B_{k+1}$ each on $2t+r-1$ 
  vertices, and $C$ on $r-1$ vertices.

  Now we colour edges as in the proof of the previous theorem. The 
  proof that such a two-coloured graph contains no monochromatic 
  $C^k_{(k+1)t+r}$ follows the proof of Theorem~\ref{LowerBound} and we 
  omit it here.
\end{proof}

We believe that the constructions above are, at least asymptotically, optimal.

The rest of this section is devoted to proving the upper bounds on $R(P_n^k,P_n^k)$ and $R(C_n^k,C_n^k)$
given in Theorem~\ref{RPnkCnk}.  Our first step is to prove an upper bound on $R(P_n,P_n^k)$, for which we 
need the following three results.

First, we recall the Erd\H{o}s-Gallai extremal theorem for cycles~\cite{ErdGall}.

\begin{theorem} [Erd\H os-Gallai~\cite{ErdGall}] \label{ErdosGallai}
Let $G$ be a graph on $n$ vertices, and $c$ an integer, $3\leq c\leq n$.
Then either $G$ contains a cycle of length at least $c$ or
\[e(G)<(c-1)(n-1)/2+1~.\]
\end{theorem}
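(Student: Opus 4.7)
The plan is to prove the equivalent contrapositive: if $G$ has $n$ vertices and contains no cycle of length at least $c$, then $e(G)\le (c-1)(n-1)/2$. I would argue by induction on $n$. For $n<c$ the bound is trivial, since $(c-1)(n-1)/2\ge \binom{n}{2}$ in that range. For the base case $n=c$, observe that $e(G)\ge (c-1)^2/2+1$ forces the complement of $G$ to contain at most $(c-3)/2$ edges, so every vertex has degree at least $(c+1)/2\ge n/2$ in $G$, and $G$ is Hamiltonian by Dirac's theorem.

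For the inductive step, let $G$ have $n>c$ vertices and $e(G)\ge (c-1)(n-1)/2+1$. First I would attempt a \emph{low-degree reduction}: if some vertex $v$ has $\deg(v)\le\lfloor(c-1)/2\rfloor$, then $G-v$ has $n-1$ vertices and at least $(c-1)(n-2)/2+1$ edges, so induction places a cycle of length at least $c$ inside $G-v$. I may therefore assume $\delta(G)\ge\lceil c/2\rceil$.

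Next I would reduce to the 2-connected case. If $G$ is disconnected with components $C_1,\ldots,C_k$, then $\sum_i e(C_i)=e(G)\ge (c-1)(n-1)/2+1>(c-1)(n-k)/2$, so by averaging some $C_j$ has more than $(c-1)(|C_j|-1)/2$ edges and, since $|C_j|<n$, induction finishes. If $G$ is connected but has a cut vertex, the analogous averaging applied to the blocks -- using $\sum(|B|-1)=n-1$ -- delivers a proper block $B$ with $e(B)>(c-1)(|B|-1)/2$ to which induction applies. The remaining case is that $G$ is 2-connected, and here I would invoke the classical theorem of Dirac that the circumference of a 2-connected graph is at least $\min(n,2\delta)$; combined with $n\ge c$ and $\delta(G)\ge\lceil c/2\rceil$, this directly produces a cycle of length at least $c$.

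The step I expect to be the main obstacle is the invocation of the 2-connected circumference bound, since its own proof requires a careful Pósa-style rotation/extension argument along a longest path in a 2-connected graph. A secondary delicate point is ensuring the block-averaging supplies a \emph{strict} sub-block on which to apply induction: if no proper block works, then $G$ itself must be a single block, which is precisely the 2-connected case handled at the end.
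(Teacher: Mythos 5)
The paper cites this result as a known theorem of Erd\H os and Gallai and gives no proof of its own, so there is nothing to compare against; what matters is whether your argument is sound, and it is. Your route---induction on $n$, stripping a low-degree vertex, decomposing into components and then blocks, and finishing with Dirac's circumference theorem ($\mathrm{circ}(G)\ge\min(n,2\delta)$ for $2$-connected $G$)---is a standard, correct proof. A few small points worth tightening if this were written out in full: in the base case $n=c$, the claim that every vertex has $G$-degree at least $(c+1)/2$ follows because each vertex's \emph{complement}-degree is bounded above by the \emph{total} number of complement edges, namely $\lfloor (c-3)/2\rfloor$, and this should be said explicitly; in the block-averaging step you should observe that a connected graph with a cut vertex has at least two blocks, each on fewer than $n$ vertices, so the averaging always lands you on a proper sub-block and the ``remaining case'' is exactly when $G$ has no cut vertex; and you should remark that when the averaging picks out a component or block $B$ on fewer than $c$ vertices, $B$ trivially has no cycle of length $\ge c$, so the bound $e(B)\le(c-1)(|B|-1)/2$ (which holds since $\binom{|B|}{2}\le(c-1)(|B|-1)/2$ for $|B|\le c-1$) already gives the contradiction without invoking induction. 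None of these is a gap, only places to spell out the argument. Erd\H os and Gallai's original argument proceeds by a direct longest-path analysis rather than quoting Dirac's circumference bound, but the two are close in spirit and your version, which absorbs the path-rotation work into the citation of Dirac, is a perfectly acceptable modern packaging.
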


Second, we need a result on maximum cycles in graphs.  The lemma below is simple, and convenient for our
purposes: a much stronger result has recently been proved by Kohayakawa, Simonovits and Skokan~\cite{KSS}

\begin{lemma} \label{MaxCycle} Given a graph $G$ containing vertex disjoint cycles $C_t$ and $C_{t'}$, if $G$
contains no cycle of length greater than $t$, then the bipartite graph $G[V(C_t),V(C_{t'})]$ contains no copy
of $K_{s,s}$, where $s=\left\lceil\frac{t}{t'}\right\rceil+2$.
\end{lemma}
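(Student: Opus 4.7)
The plan is to prove the contrapositive by contradiction: assume a copy of $K_{s,s}$ with parts $X\subseteq V(C_t)$ and $Y\subseteq V(C_{t'})$ exists, and use it to splice together a cycle in $G$ of length strictly greater than $t$. The idea is that the $K_{s,s}$ gives us enough connections between the two cycles that we can excise a short arc of $C_t$ and replace it by a detour that traverses nearly all of $C_{t'}$, gaining length on the net.

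First I would enumerate $X=\{x_1,\dots,x_s\}$ in their cyclic order around $C_t$. The $s$ arcs of $C_t$ determined by $X$ have lengths summing to $t$, so pigeonhole produces consecutive $x_i,x_{i+1}$ on $C_t$ whose shorter arc has length $a\le\lfloor t/s\rfloor$; write $\alpha$ for the \emph{complementary} long arc of length $t-a$. In the same way, enumerate $Y$ around $C_{t'}$ and pick consecutive $y_1,y_2\in Y$ on $C_{t'}$ whose shorter arc has length $b\le\lfloor t'/s\rfloor$; write $\beta$ for the long arc from $y_1$ to $y_2$ in $C_{t'}$, of length $t'-b$.

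Next I would assemble the closed walk
\[
x_i\;\xrightarrow{\alpha}\;x_{i+1}\;\xrightarrow{\ \ }\;y_1\;\xrightarrow{\beta}\;y_2\;\xrightarrow{\ \ }\;x_i,
\]
where both transit edges exist because $K_{s,s}$ is present between $X$ and $Y$. Since $C_t$ and $C_{t'}$ are vertex-disjoint and $\alpha,\beta$ use only internal vertices of their respective cycles, this is a genuine cycle $C$ in $G$. Its length is
\[
(t-a)+1+(t'-b)+1 \;=\; t+t'+2-a-b.
\]
So $C$ is longer than $t$ provided $a+b<t'+2$, and by our bounds on $a$ and $b$ it suffices to have
\[
\frac{t+t'}{s}\;<\;t'+2,\qquad\text{i.e.}\qquad s\;>\;\frac{t+t'}{t'+2}.
\]

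The final step is the purely arithmetic check that $s=\lceil t/t'\rceil+2$ satisfies this inequality: since $(t+t')/(t'+2) = (t/t'+1)\cdot t'/(t'+2) < t/t'+1 \le \lceil t/t'\rceil+1 < s$, the required inequality holds, contradicting the assumption that $G$ has no cycle of length greater than~$t$. I do not anticipate a serious obstacle here: the substantive ingredient is just pigeonhole on the two cycles, with the case $s>t'$ (where $K_{s,s}$ cannot fit in $V(C_{t'})$) being vacuous.
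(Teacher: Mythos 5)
Your proposal is correct and takes essentially the same approach as the paper: pigeonhole among the $s$ vertices of each part of the $K_{s,s}$ to find a long arc on each cycle, then splice the two long arcs together using two edges of the $K_{s,s}$ to obtain a cycle longer than $t$. The only difference is cosmetic: you retain the $+2$ for the two connecting edges (yielding the slightly weaker sufficient condition $s>(t+t')/(t'+2)$), whereas the paper drops it and simply uses $s>t/t'+1$; both are satisfied by $s=\lceil t/t'\rceil+2$.
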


\begin{proof}
Suppose not, and let $G$, $C_t$, $C_{t'}$ form a counterexample.  Now $G$ contains a copy of the bipartite
graph $K_{s,s}$ whose parts are in $V(C_t)$ and $V(C_{t'})$, so in particular there are two vertices of this
complete bipartite graph in $C_t$ which are joined in $C_t$ by a path $P$ of length at least $\frac{s-1}{s}t$,
and two more in $C_{t'}$ joined by a path $P'$ in $C_{t'}$ of length at least $\frac{s-1}{s}t'$.
The vertices in $V(P)\cup V(P')$ form a cycle of length at least $\frac{s-1}{s}(t+t')>t$, which is a
contradiction.
\end{proof}

 Third, a standard greedy method allows us to find a copy of $P_n^k$ in 
 a very dense graph on only slightly more than $n$ vertices.

\begin{lemma} \label{CoverW}
 Let $k$ and $n$ be natural numbers, and $\varepsilon$ a real number, 
 satisfying $0<\varepsilon\leq (k+3)^{-1}$ and $n>3\varepsilon^{-2}$.  
 If $H$ is any graph on at least $n+(k+2)\varepsilon n$ vertices, such 
 that the complement $\overline{H}$ contains no cycle of length
 at least $\varepsilon^2 n$, then $H$ contains a copy of $P_n^k$.
\end{lemma}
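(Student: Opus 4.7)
The plan is to apply the Erd\H{o}s--Gallai theorem to $\overline{H}$ to bound its edge count, then discard the few vertices of $H$ with high non-degree, and finally embed $P_n^k$ greedily into what remains.

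First, since $n>3\varepsilon^{-2}$ forces $\lceil\varepsilon^2 n\rceil\ge 3$, I would apply Theorem~\ref{ErdosGallai} to $\overline{H}$ with $c=\lceil\varepsilon^2 n\rceil$; the hypothesis that $\overline{H}$ has no cycle of length at least $\varepsilon^2 n$ then gives $e(\overline{H})<\varepsilon^2 n\,(|H|-1)/2+1$. Consequently the sum of non-degrees in $H$ is less than $\varepsilon^2 n\,|H|+2$, so the set $B\subseteq V(H)$ of vertices of non-degree exceeding $\varepsilon n$ satisfies $|B|\le\varepsilon|H|+1$. Setting $H'=H-B$, every vertex of $H'$ has at most $\varepsilon n$ non-neighbours in $H'$, and a short computation using $\varepsilon\le 1/(k+3)$ together with $|H|\ge n+(k+2)\varepsilon n$ and $n>3\varepsilon^{-2}$ yields
\[
 |H'|\;\ge\;(1-\varepsilon)|H|-1\;\ge\;n+k\varepsilon n\,.
\]

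The final step is a greedy embedding of $P_n^k$ into $H'$. Label the vertices of $P_n^k$ as $v_1,\dots,v_n$, with $v_iv_j$ an edge iff $|i-j|\le k$, and pick images $\phi(v_1),\phi(v_2),\dots\in V(H')$ one at a time, requiring at step $i$ that $\phi(v_i)$ be adjacent in $H'$ to each of $\phi(v_{i-1}),\dots,\phi(v_{\max(i-k,1)})$ and distinct from $\phi(v_1),\dots,\phi(v_{i-1})$. Each of the at most $k$ previously placed neighbours of $v_i$ excludes at most $\varepsilon n$ non-neighbours in $H'$, and the $i-1\le n-1$ vertices already used are also excluded, so the number of valid candidates is at least $|H'|-k\varepsilon n-(n-1)\ge 1$. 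Hence the procedure never stalls, and we obtain the desired copy of $P_n^k$.

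There is no serious obstacle here; the lemma is essentially a controlled greedy embedding built on top of Erd\H{o}s--Gallai. The only real care is bookkeeping: one needs to verify that the slack afforded by the hypothesis $\varepsilon\le 1/(k+3)$ (rather than the naive $1/(k+2)$) is enough to absorb both the removal of the bad set $B$ and the $n-1$ previously used vertices at the final greedy step.
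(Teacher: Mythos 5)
Your proof is correct and follows essentially the same route as the paper's: apply Erd\H{o}s--Gallai to $\overline{H}$ to bound $e(\overline{H})$, discard the $O(\varepsilon|H|)$ vertices of high non-degree to get a subgraph $H'$ of at least $n+k\varepsilon n$ vertices in which every vertex has at most $\varepsilon n$ non-neighbours, and then embed $P_n^k$ greedily. Your bookkeeping for $|H'|$ and for the number of valid candidates at each greedy step checks out.
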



\begin{proof}
 By Theorem~\ref{ErdosGallai}, $\overline{H}$ has at most 
 $(\varepsilon^2 n-1)(|H|-1)/2+1<\varepsilon^2 |H| n/2$ edges. If 
 $\overline{H}$ had more than $n+k\varepsilon n$ vertices of degree 
 greater than $\varepsilon n$, then it would have at least 
 $(n+k\varepsilon n)\frac{\varepsilon n}{2}$ edges, which is a 
 contradiction. So at least $n+k\varepsilon n$ vertices of 
 $\overline{H}$ have degree less than $\varepsilon n$.   
 Let $H'$ be the subgraph of $H$ induced by these vertices. Then $H'$ 
 has at least $n+k\varepsilon n$ vertices, the neighbourhood of any
 set of $k$ vertices of $\overline{H'}$ contains at most 
 $k\varepsilon n$ vertices, and so, in $H'$, every set of $k$
 vertices has at least $n$ common neighbours. We can embed $P_n^k$ into 
 $H'$ by a simple greedy procedure: we choose any vertex to be the 
 first vertex of the path, any neighbour to be the second vertex of the 
 path, and so on. At each embedding step, we only need to find a vertex 
 which is adjacent to all of the last $k$ vertices embedded, and which 
 has not yet been used in the embedding. Such a vertex is guaranteed to 
 exist since any $k$ vertices of $H'$ have at least $n$ common 
 neighbours, and we only need to embed a total of $n$ vertices.
\end{proof}

Now we can prove our upper bound on the Ramsey number of a path versus 
a power of a path.

\begin{lemma} \label{RamseyPkP}
 For any natural number $k$,
 \[
  R(P_n,P_n^k) \le \left(k+1+\frac{1}{k+1}\right)n+o(n)~.
 \]
\end{lemma}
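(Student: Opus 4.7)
The plan is to reduce the problem to Lemma~\ref{CoverW}. Concretely, I will show that if $N = (k+1+\tfrac{1}{k+1})n + o(n)$ and the 2-colouring of $K_N$ has no red $P_n$, then there is a subset $W \subseteq V(K_N)$ with $|W| \ge n + (k+2)\eps n$ on which the red graph has no cycle of length at least $\eps^2 n$. Lemma~\ref{CoverW} (applied to the blue graph on $W$, whose complement is the red graph on $W$) then produces the desired blue copy of $P_n^k$.

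To construct $W$, I iteratively remove from the current graph a longest red cycle of length at least $c := \eps^2 n$; this yields a sequence $C_1, C_2, \ldots, C_r$ of vertex-disjoint red cycles with lengths $t_1 \ge t_2 \ge \cdots \ge t_r \ge c$. Since the red graph contains no $P_n$, and hence no cycle of length $\ge n$, we have $t_i \le n-1$ throughout. The set $W$ of vertices never removed satisfies $|W| = N - \sum_i t_i$ and has no red cycle of length $\ge c$, as required.

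The heart of the argument lies in bounding $\sum_i t_i$. Two structural facts drive the bound. First, if $t_i + t_j \ge n$ for distinct $i,j$, then no red edge can run between $V(C_i)$ and $V(C_j)$: splicing such an edge with the two cycles would yield a red path on $t_i + t_j \ge n$ vertices. Consequently, if at least $k+1$ of the removed cycles have length $\ge n/2$, their union contains a blue complete $(k+1)$-partite graph with every part of size at least $n/2 \ge \lceil n/(k+1)\rceil$, which already hosts a blue $P_n^k$, contradicting our assumption. So at most $k$ cycles have length $\ge n/2$, contributing at most $k(n-1)$ to $\sum_i t_i$. Second, Lemma~\ref{MaxCycle} bounds the red density between $C_1$ (the longest removed cycle) and any shorter $C_j$ by ruling out a red $K_{s,s}$ with $s = \lceil t_1/t_j \rceil + 2$, which via Theorem~\ref{KSTBound} forces the red bipartite graph between them to be sparse.

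The main obstacle is controlling the contribution of the cycles of intermediate length $c \le t_i < n/2$: one needs to show their combined length is at most $\tfrac{n}{k+1} + o(n)$, so that $\sum_i t_i \le (k + \tfrac{1}{k+1})n + o(n) = N - n - o(n)$, leaving $|W|$ large enough for Lemma~\ref{CoverW}. I expect this to follow from iterating the splicing observation together with the density restriction from Lemma~\ref{MaxCycle}: too many intermediate cycles, combined either with each other or with the long cycles via the relatively few available red edges, must force either a red path on $n$ vertices (giving a red $P_n$) or a sufficiently rich blue multipartite structure to embed a blue $P_n^k$ directly. Once $|W| \ge n + (k+2)\eps n$ is in hand, Lemma~\ref{CoverW} finishes the proof.
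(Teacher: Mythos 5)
Your first steps match the paper: peel off vertex-disjoint longest red cycles $V_1,V_2,\ldots$, stop when they fall below length $\eps^2 n$, let $W$ be the leftover vertices, and note that if $|W| \ge n + (k+2)\eps n$ then Lemma~\ref{CoverW} yields a blue $P_n^k$. The observations that $t_i+t_j \ge n$ forbids red edges between $V(C_i)$ and $V(C_j)$, and that $k+1$ cycles of length $\ge n/2$ give a blue complete $(k+1)$-partite graph containing $P_n^k$, are both correct.

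However, the main plan --- to bound $\sum_i t_i$ by roughly $(k + \tfrac{1}{k+1})n$ so that $|W|$ is always large enough --- is not viable, and this is a genuine gap. Consider a colouring in which the red graph is a disjoint union of $k+1$ red cliques, each on $n-1$ vertices. This has no red $P_n$, the peeling process removes everything, $\sum_i t_i \approx (k+1)n$, and $W$ is essentially empty. So there is no hope of proving $|W| \ge n + (k+2)\eps n$ in general; the lemma is true here only because the blue graph between the cycles already contains a complete $(k+1)$-partite graph with large parts. Your last sentence gestures at this possibility as a fallback, but it is actually the other branch of the main dichotomy, not a patch to be applied after bounding $\sum t_i$.

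The paper's proof handles the case $|W|$ small by working entirely with the cycles $V_1,\ldots,V_r$. Lemma~\ref{MaxCycle} shows there is no red $K_{s,s}$ (with $s = O(\eps^{-2})$) between any pair $V_i, V_j$ --- not just between $C_1$ and the others, and not just between pairs with $t_i+t_j \ge n$ --- and then Lemma~\ref{MyBlowUp} (which you do not invoke) converts this into a genuinely blue complete multipartite structure on slightly shrunk copies $V'_i$. The $V'_i$ are grouped greedily (in order) into $k+1$ blocks $W_1,\ldots,W_{k+1}$, each of size at least $\lceil n/(k+1)\rceil$ but with $W_1,\ldots,W_k$ of size $<n$, which is possible precisely because $\sum |V_i| \gtrsim (k+\tfrac{1}{k+1})n$ and each $|V_i|\le n-1$. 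This gives a blue complete $(k+1)$-partite graph that contains $P_n^k$. Without Lemma~\ref{MyBlowUp} (or an equivalent embedding device converting ``no red $K_{s,s}$'' into ``all blue after small deletions''), the $K_{s,s}$-free condition from Lemma~\ref{MaxCycle} does not directly yield the multipartite blue structure you need in the intermediate-cycle case, so the proposal as written cannot be completed.
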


 Note that this upper bound is significantly larger than the lower  
 bound $R(P_n,P_n^k)\geq k(n-1)+\sigma(P_n^k)\sim 
 \left(k+\frac{1}{k+1}\right)n$. We conjecture that the lower bound is 
 correct. An improvement in this upper bound would improve the upper 
 bound in Theorem~\ref{RPnkCnk} by a corresponding amount, but this is 
 not the source of the factor of~2 between our lower and upper bounds. 

\begin{proof}
 We show that, for any $0<\varepsilon\leq (k+3)^{-1}$, the Ramsey 
 number $R(P_n,P_n^k)$ is bounded above by
 \begin{gather}
   \left(k+1+\frac{1}{k+1}+(k+3)\varepsilon\right)n\label{as:1}\\
   \intertext{ for }
   n>\left(16(2k+1)\varepsilon^{-8}\right)^{4\varepsilon^{-2}}~. 
    \label{as:2}
\end{gather}

 Accordingly, we assume that $n$ is indeed greater than  
 $\left(16(2k+1)\varepsilon^{-8}\right)^{4\varepsilon^{-2}}$.
 Let $G$ be a two-edge-coloured complete graph on  
 $\left(k+1+\frac{1}{k+1}+(k+3)\varepsilon\right)n$ vertices which 
 contains no red $P_n$.  We choose successively vertex-disjoint 
 maximum-length red cycles in $G$. Let $V_1$ be the vertex set of the 
 longest red cycle of $G$, $V_2$ the vertex set of the longest red 
 cycle of $G-V_1$, and so on.

 Since $P_n\subset C_n$, we have $n-1\geq|V_1|\geq|V_2|\geq\cdots$.  
 Let $r$ be the greatest index such that $|V_r|\geq \varepsilon^2 n$, 
 and let $W=V(G)-\displaystyle\bigcup_{i=1}^rV_i$.  Since the sets 
 $V_i$ are disjoint, we have 
 $r\leq (k+1+\frac{1}{k+1}+(k+3)\varepsilon)\varepsilon^{-2} 
  <\varepsilon^{-3}$, independently of $n$.

 If $|W|\geq n+(k+2)\varepsilon n$, then the graph of blue edges in $W$ 
 satisfies the conditions of Lemma~\ref{CoverW}, so $G$ contains a blue 
 copy of $P_n^k$. Therefore we will assume $|W|<n+(k+2)\varepsilon n$.

 Let $s=\left\lceil\frac{n}{\varepsilon^2 n}\right\rceil+2 < 
 2\varepsilon^{-2}$. By Lemma~\ref{MaxCycle}, for any
 $1\leq i<j\leq r$, there is no red copy of $K_{s,s}$ in $G$ whose 
 parts are in $V_i$ and $V_j$ respectively. We wish to use this 
 together with Lemma~\ref{MyBlowUp} to find a blue copy of $P_n^k$  
 (which has maximum degree $2k$). We will use the fact that $P_n^k$ is 
 a subgraph of the complete $(k+1)$-partite graph with parts of size
 $\left\lceil\frac{n}{k+1}\right\rceil$. Observe that no part $V_i$ has 
 size greater than $n$, and the union of all the parts has size at 
 least $\left(k+\frac{1}{k+1}+\varepsilon\right)n$.

 Now choose $\ell_1$ to be the smallest index such that
 \[
  \sum_{i=1}^{\ell_1} \left(|V_i|-4s^2n^\frac{2s-1}{2s}(2k+1)\right) 
   \geq
  \left\lceil\frac{n}{k+1}\right\rceil~.
 \]
 Since $4s^2n^\frac{2s-1}{2s}(2k+1)r<\varepsilon n$ (here we use 
 \eqref{as:2}), by \eqref{as:1} this is possible and, furthermore, 
 $\displaystyle\sum_{i=1}^{\ell_1}|V_i|<n$ (in fact, this sum can 
 exceed $2\left\lceil\frac{n}{k+1}\right\rceil +\varepsilon n$ only 
 when $\ell_1=1$).

For each $2\leq j\leq k$ in succession, let $\ell_j$ be the smallest 
index such that
\[
 \sum_{i=\ell_{j-1}+1}^{\ell_j}   
   \left(|V_i|-4s^2n^\frac{2s-1}{2s}(2k+1)\right)
 \geq
 \left\lceil\frac{n}{k+1}\right\rceil~.
\]
 Again, this is possible because $\displaystyle \sum_{i=1}^{\ell_{j-1}} |V_i|<(j-1)n$ 
 and $4s^2n^\frac{2s-1}{2s}(2k+1)r<\varepsilon n$, and we also have 
 $\displaystyle \sum_{i=\ell_{j-1}+1}^{\ell_j}|V_i|<n$.

 We apply Lemma~\ref{MyBlowUp} to the parts $V_1,\ldots,V_r$ of $G$.  
 Let $V'_1,\ldots,V'_r$ be the parts of $G'$ as in the lemma; since for 
 each $1\leq i<j\leq r$ the sets $V_i$ and $V_j$ are blue-adjacent, the 
 parts $V'_i$ and $V'_j$ span a complete bipartite graph. Let 
 $W_1=\displaystyle\bigcup_{i=1}^{\ell_1}V'_i$,
 $W_j=\displaystyle\bigcup_{i=\ell_{j-1}+1}^{\ell_j}V'_i$ for each 
 $2\leq j\leq k$, and 
 $W_{k+1}=\displaystyle\bigcup_{i=\ell_k+1}^r V'_i$. 
 Since $|W_1|,\dots, |W_k|<n$ and \eqref{as:1} holds, we are guaranteed 
 to find that $|W_{k+1}|\geq\left\lceil\frac{n}{k+1}\right\rceil$. The 
 $W_j$ form the parts of a complete $(k+1)$-partite subgraph of $G'$, 
 so that $P_n^k$ can be embedded into $G'$.  By Lemma~\ref{MyBlowUp}, 
 $G$ contains a blue copy of $P_n^k$.
\end{proof}

It is now straightforward to prove our desired bounds on $R(P_n^k,P_n^k)$ and $R(C^k_n,C^k_n)$.

\begin{proof}[Proof of Theorem~\ref{RPnkCnk}] Given $\varepsilon>0$, let $s=s(n)$ be any sufficiently slowly
growing function of $n$ and $n_0$ be any sufficiently large integer. Suppose $n>n_0$.

Suppose first that we seek either a monochromatic $P^k_n$, or a monochromatic $C^k_n$ where $k+1$ divides $n$. 
Let $G$ be a two-coloured complete $N$-vertex graph, where
\[N=\left(2k+2+\frac{2}{k+1}\right)n+\varepsilon n~.\]

We partition $V(G)$ into a collection $\mathcal{R}$ of red $s$-cliques, $\mathcal{B}$ of blue $s$-cliques,
and a leftover set of at most $2^{2s}$ vertices.  Without loss of generality, we assume that
$|\mathcal{R}|\geq|\mathcal{B}|$.

We call two cliques $R_i$ and $R_j\in\mathcal{R}$ red-adjacent when 
$G[R_i,R_j]$ contains a copy of $K_{4k,4k}$, and blue-adjacent 
otherwise.  
This defines the two-coloured complete graph $G^*$ on $\mathcal{R}$. 
For $t=n/s+\varepsilon n/4s(k+2)$, we have
\[|
  G^*|\geq \frac{N-2^{2s}}{2s}\geq \frac{N-\varepsilon n/2}{2s}
      \geq \left(k+1+\frac{1}{k+1}\right)t+\frac{\varepsilon t}{8}.
\]
If we find a red copy of $P_t$ in $G^*$, then we immediately find a red 
copy both of $P^k_n$ and of $C^k_n$ in $G$, as $st\geq n$.

But by Lemma~\ref{RamseyPkP}, if we do not have in $G^*$ a red copy of 
$P_t$, then we do have a blue copy of $P^k_t$; by Lemma~\ref{MyBlowUp}, 
we find in $G$ a copy of $P^k_n$ and, provided that $k+1$ divides $n$, 
also of $C^k_n$, as required.

If we seek a monochromatic copy of $C^k_n$ and $k+1$ does not divide 
$n$, then observe that (provided $n>(k+1)^2$) we have 
$\chi(C^k_n)=k+2$.  We use the same strategy, now applying 
Lemma~\ref{RamseyPkP} to find either a red $P_t$ or blue $P^{k+1}_t$ in 
the (by assumption larger) graph $G^*$, to obtain the desired result.
\end{proof}

We note that the primary reason why the upper bound we obtain is larger than the
conjectured value by approximately a factor of~2 is that, in this proof, we simply throw away the 
minority colour cliques.

\section{Ramsey numbers of poor expanders}

In this section we prove Theorem~\ref{BBwRam}. We prove this theorem by combining
Theorem~\ref{BSTThm} with a variation of Theorem~\ref{RPnkCnk}.

\begin{lemma}\label{SzPartPnk} Given sufficiently small $\varepsilon>0$ and
integer $k$, there exists $n_0$ such that the following holds.  Let $n\geq n_0$
and $G$ be any three-edge-colouring of the complete graph $K_{(2k+3)n}$, with
edges coloured either `red', `blue', or `bad', such that no more than
$\varepsilon n$ bad edges meet any single vertex.  Then $G$ contains either a red
or a blue copy of $P^k_n$.
\end{lemma}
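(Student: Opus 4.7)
The plan is to follow the proof of Theorem~\ref{RPnkCnk} closely, with the bad edges playing the role of ``missing'' edges whose effect is controlled by the max-degree-$\varepsilon n$ hypothesis. The slack between the $(2k+3)n$ vertices available and the $(2k+2+\tfrac{2}{k+1})n+o(n)$ used in Theorem~\ref{RPnkCnk} is what will absorb the additional error introduced by the bad edges, provided $\varepsilon$ is chosen small enough in terms of $k$. Let $s=s(n)$ be a function growing sufficiently slowly with $n$. First I will partition $V(G)$ into monochromatic $s$-cliques inside the non-bad subgraph, together with a leftover of size $O(2^{2s}+s\varepsilon n)$. This is possible because the standard iterative Ramsey proof of $R(K_s,K_s)\le 2^{2s}$ can be run inside any subset in which each vertex has at most $\varepsilon n$ bad-neighbours: each pick loses at most $\varepsilon n$ extra vertices per step, so any subset of size at least $2^{2s}+s\varepsilon n$ still yields a monochromatic $s$-clique whose edges are all non-bad. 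Without loss of generality the red $s$-cliques cover at least half of the partitioned vertices, giving a collection $\mathcal{R}=\{R_1,\ldots,R_{M_R}\}$ with $M_R\ge (2k+3)n/(2s)-O(\varepsilon n)$.

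Next, I will build the auxiliary graph $G^*$ on $\mathcal{R}$ and classify each pair $R_iR_j$ as \emph{red-adjacent} if $G[R_i,R_j]$ contains a red $K_{4k,4k}$, \emph{blue-adjacent} if it contains no red $K_{4k,4k}$ but does contain a blue $K_{4k,4k}$, and \emph{bad-heavy} otherwise. By Theorem~\ref{KSTBound}(a), in a bad-heavy pair the numbers of red and of blue edges in $G[R_i,R_j]$ are each at most $2s^{2-1/(4k)}$, so the number of bad edges between $R_i$ and $R_j$ is at least $s^2/2$. Since the total number of bad edges in $G$ is at most $\varepsilon(2k+3)n^2/2$, there are at most $\varepsilon(2k+3)n^2/s^2$ bad-heavy pairs, an $O(\varepsilon/k)$-fraction of $\binom{M_R}{2}$. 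A Markov-type deletion of the at most $\sqrt{\varepsilon}\,M_R$ cliques whose bad-heavy degree in $G^*$ exceeds $\sqrt{\varepsilon}\,M_R$ then leaves a subgraph $G^{**}$ on $M'_R\ge(1-\sqrt{\varepsilon})M_R$ vertices, which I view as a 2-coloured graph (with red-adjacent and blue-adjacent as the two colours) whose remaining non-edges (the bad-heavy pairs inside $G^{**}$) have maximum degree at most $\sqrt{\varepsilon}\,M_R$.

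The last step is to apply a robust version of Lemma~\ref{RamseyPkP} to $G^{**}$ with $t\approx n/s$: a red $P_t$ in $G^{**}$ lifts to a red $P_n^k$ in $G$ by concatenating red $K_{k,k}$'s extracted from the red $K_{4k,4k}$'s between consecutive cliques on the path, while a blue $P_t^k$ in $G^{**}$ lifts to a blue $P_n^k$ in $G$ via Lemma~\ref{MyBlowUp} applied with $r=4k$ and $d=2k$. The main obstacle is establishing this robust variant: any 2-coloured graph on at least $(k+1+\tfrac{1}{k+1})t+\delta t$ vertices whose non-edge graph has maximum degree at most $\delta t$ contains a red $P_t$ or a blue $P_t^k$, for $\delta$ sufficiently small. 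I expect to mimic the existing proof of Lemma~\ref{RamseyPkP}, with Lemma~\ref{CoverW} adapted to the setting in which $\overline{H}$ is the union of a graph with no cycle of length $\ge\varepsilon^2 n$ and a graph of maximum degree $\delta n$: Theorem~\ref{ErdosGallai} combined with the max-degree bound still guarantees that all but at most $\varepsilon|W|$ vertices have $\overline{H}$-degree $O((\varepsilon+\delta)n)$, so the greedy embedding of $P_n^k$ from the original proof goes through with an additional $O((\varepsilon+\delta)n)$ of slack in the required vertex count. The slack of order $(k-1)n/((k+1)s)$ available in $M'_R$ (positive for $k\ge 2$, with the case $k=1$ easily handled by direct counting on the non-bad subgraph via Erd\H{o}s--Gallai) then comfortably dominates all the $O(\sqrt{\varepsilon}\,n/s)$ error terms once $\varepsilon$ is chosen small enough in terms of $k$.
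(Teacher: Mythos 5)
Your overall strategy matches the paper's (terse) indication of how Lemma~\ref{SzPartPnk} should be proved: partition into monochromatic $s$-cliques while avoiding bad edges, form an auxiliary graph $G^*$ on the majority-colour cliques in which certain pairs are ``bad-heavy'' (non-edges), check that $G^*$ has small maximum co-degree, and invoke a robust version of Lemma~\ref{RamseyPkP}. There is, however, a quantitative slip in the very first step that your choice $s=s(n)\to\infty$ turns into a genuine gap.

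The modified iterative Ramsey argument does \emph{not} cost $s\varepsilon n$ extra vertices over the $2^{2s}$ of the classical argument; it costs roughly $2^{2s}\varepsilon n$. At each of the $\approx 2s$ steps one first discards up to $\varepsilon n$ bad-neighbours of the chosen vertex and then restricts to the majority colour class. The recursion is $m_{i+1}\ge (m_i-1-\varepsilon n)/2$, whence $m_{2s}\ge (m_0+1+\varepsilon n)/2^{2s}-(1+\varepsilon n)$, so a monochromatic bad-free $K_s$ is only guaranteed once $m_0\ge (2^{2s}-1)(1+\varepsilon n)\approx 2^{2s}\varepsilon n$: the per-step losses compound through the subsequent halvings rather than adding. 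Consequently the leftover from your partition is $\Theta(2^{2s}\varepsilon n)$, and if $s\to\infty$ with $\varepsilon>0$ fixed this is superlinear in $n$, which destroys the accounting against the available slack $\tfrac{k-1}{k+1}n$. The repair is to take $s$ to be a \emph{large constant depending only on $k$}: for fixed $s$ large enough the $o(n)$ error in Theorem~\ref{RPnkCnk} becomes $\delta(s)n$ with $\delta(s)\to 0$, so the slack survives, and one then takes $\varepsilon<2^{-2s}\cdot c(k)$ so that $2^{2s}\varepsilon n$ is a small fraction of $n$. This is precisely what the phrasing ``given sufficiently small $\varepsilon>0$'' is licensing. (The same constant-$s$ point is implicit in the paper's analogous modification in Lemma~\ref{StabPnkH:modified}, where $s$ is fixed in terms of $H$, $k$, $\eps'$.)

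One further, harmless, remark: the Markov-type deletion of cliques with high bad-heavy degree is unnecessary. With your definition of bad-heavy (at least $s^2/2$ bad edges between the two $s$-cliques, which follows from the absence of red and blue $K_{4k,4k}$ via Theorem~\ref{KSTBound}(a) once $s\ge 8^{4k}$), if a clique $Q_i$ has $T$ bad-heavy neighbours then some vertex of $Q_i$ has bad degree at least $Ts/2$ in $G$, so $T\le 2\varepsilon n/s=O(\varepsilon)\cdot M_R$; the co-degree bound in $G^*$ holds automatically, exactly as in the paper's argument for $\Delta(\overline{G^*})$ in the proof of Lemma~\ref{StabPnkH:modified}.
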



The proof of this lemma is a straightforward modification of the
proof of Theorem~\ref{RPnkCnk}, in much the same way as
Lemma~\ref{StabPnkH:modified} is a straightforward modification of
Lemma~\ref{StabLem2}. As there, we must replace the Erd\H{o}s-Szekeres bound
with an easy modification to find red and blue cliques, and, as there, we must
permit our auxiliary graph $G^*$ to contain some non-edges (but not too many at any
vertex). It is straightforward to check that the remainder of the proof is
insensitive to this change; we omit the details.

 We are now in a position to complete the proof of Theorem~\ref{BBwRam}.

\begin{proof}[Proof of Theorem~\ref{BBwRam}] 
Given $\Delta$, let $\mu=1/(30(\Delta+1)^2)$ and $\gamma=1/2$. 
For $k$, $2\leq k\leq \Delta+1$, let $\beta_{\subref{BSTThm}}=\beta_{\subref{BSTThm}}(k),\eps_{\subref{BSTThm}}>0,$ and $n_{\subref{BSTThm}}=n_{\subref{BSTThm}}(k)$ be constants such that
whenever $n\geq n_{\subref{BSTThm}}$, Theorem~\ref{BSTThm} permits the embedding into a
$(2k+4)n$-vertex graph $F$ (possessing a suitable $\eps_{\subref{BSTThm}}$-regular partition)
of any $n$-vertex graph $G$ with $\Delta(G)\leq\Delta$, $\chi(G)=k$, and $\bw(G)\leq \beta_{\subref{BSTThm}} n$.

We set $\beta=\min\{\beta_{\subref{BSTThm}}(k), 2\leq k\leq \Delta+1\}$ and $n_0=\max\{n_{\subref{BSTThm}}(k), 2\leq k\leq \Delta+1\}$.
Let $G$ be any $n$-vertex graph with $\Delta(G)\leq\Delta$ and $\bw(G)\leq\beta n$. Set $k=\chi(G)\leq\Delta+1$, and let $F$ be any complete $2$-coloured graph on $(2k+4)n$
vertices.

By Theorem~\ref{SzLem}, $F$ possesses an $\eps_{\subref{BSTThm}}$-regular partition. Let $R(F)$
be the corresponding $(2k+3)m$-vertex cluster graph, with edges coloured `red'
when they correspond to $\eps_{\subref{BSTThm}}$-regular pairs whose density of red edges is at
least $\frac{1}{2}$, `blue' when they correspond to $\eps_{\subref{BSTThm}}$-regular pairs whose
density of red edges is less than $\frac{1}{2}$, and `bad' otherwise.

By Lemma~\ref{SzPartPnk} applied to $R(F)$, $R(F)$ contains either a red or a
blue copy of $P^k_m$. By symmetry we may presume that it is a red copy.

Observe that $\frac{1}{2k+4}+\mu\leq\frac{1}{2k+3}$. It follows that we may set
$\eta=\frac{1}{2k+4}$ and apply Theorem~\ref{BSTThm} to the graph formed by the
red edges of $F$, with the $\eps_{\subref{BSTThm}}$-regular partition given, to find a copy of
$G$; this is a red copy of $G$ in $F$, completing the proof. \end{proof}

We made no effort to optimise the constants implicit in either Lemma~\ref{SzPartPnk} or
Theorem~\ref{BBwRam}. It seems very likely that given any $\eps>0$ there is
$\delta>0$ such that the following is true for sufficiently large $n$. If $F$ is
any two-coloured complete graph on $R(P^k_n,P^k_n)+\eps n$ vertices, then even
after deleting $\delta n$ edges meeting each vertex of $F$, there remains either
a red or a blue copy of $P^k_n$ in $F$. It would then follow that given $\eps>0$,
if $G$ is any $n$-vertex graph with maximum degree $\Delta$, chromatic number $k$
and bandwidth $\beta n$, where $\beta$ is sufficiently small and $n$ sufficiently
large, then $R(G,G)\leq R(P^k_n,P^k_n)+\eps n$.

It seems likely that $R(G,G)\leq R(P^{k-1}_n,P^{k-1}_n)+\eps n$ is true. However
to prove this (at least by the methods used here) one would need to be able to
find in the Szemer\'edi cluster graph not only a monochromatic $(k-1)$st power of a path of
sufficient length, but also a structure (for example an appropriately positioned
$(k+1)$-clique in the same colour) permitting redistribution of vertices between
colour classes.

\section{Open Problems}

We collect here some open problems related to our work, including some that we have mentioned in the paper.

First, we wonder whether there is scope for some improvement in Theorem~\ref{NExpAlwaysGood}: can we weaken the hypothesis
that the bandwidth be sublinear?

\begin{problem}
Is there, for any $d\geq 3$, a constant $\eps_d > 0$ such that the class ${\mathcal G}_{d,\eps_d n}$ of graphs $G$ with maximum degree~$d$ and
bandwidth at most $\eps_d |G|$ is always-good?
\end{problem}

Another possibility for weakening the hypotheses of Theorem~\ref{NExpAlwaysGood} is to replace the bound
on the maximum degree by a bound on the degeneracy of $G$.

\begin{conjecture}
For each fixed $d$, and each function $\beta (n)=o(n)$, the class ${\mathcal G}'_{d,\beta}$ of graphs $G$ with degeneracy
at most $d$ and bandwidth at most $\beta(|G|)$ is always-good.
\end{conjecture}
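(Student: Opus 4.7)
My proposed approach would mirror the proof of Theorem~\ref{NExpAlwaysGood}. The combinatorial skeleton of that proof---apply the Regularity Lemma to the red subgraph of the host, colour the cluster graph by red-edge density, then either produce a blue copy of $H$ via Lemma~\ref{fact:embed} or apply the stability result Lemma~\ref{StabPnkH:modified} and combine it with an embedding of $G$ into a nearly complete red host---never uses the maximum degree bound on $G$ except via two concrete embedding ingredients, namely Theorem~\ref{BSTThm} (the bandwidth embedding) and Lemma~\ref{GenEmbed} (the greedy-plus-swap embedding into an almost-complete host). The task therefore reduces to proving degenerate analogues of these two ingredients, after which the rest of the argument transfers verbatim. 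Crucially, a graph with bandwidth at most $\beta(n) = o(n)$ automatically has maximum degree at most $2\beta(n) = o(n)$, so we need only embedding results for $d$-degenerate graphs whose maximum degree is sublinear, not for arbitrary $d$-degenerate graphs.

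For the first ingredient, I would prove a degenerate version of Theorem~\ref{BSTThm}: for $\mu,\gamma > 0$ and $\chi,d \in \mathbb{N}$, if $\eps$ is small enough and $F$ has an $\eps$-regular partition into at most $K$ parts whose cluster graph $R(F)$ (with density threshold $\gamma$) contains a copy of $P^\chi_{(\eta+\mu)r}$, then $F$ contains every $(\eta n)$-vertex graph of degeneracy at most $d$, chromatic number at most $\chi$, and bandwidth at most $\beta n$. The graph-homomorphism and cluster-covering steps of the BST proof are purely structural and transfer unchanged. Only the final ``fill in'' step, originally handled by the Koml\'os--S\'ark\"ozy--Szemer\'edi Blow-up Lemma, needs to be replaced by one of the degenerate Blow-up Lemmas now available in the literature (of which the Allen--B\"ottcher--H\`an--Kohayakawa--Person version is a natural candidate); the sublinearity of $\Delta(G)$ keeps us well inside their regime of applicability.

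For the second ingredient, I would replace Lemma~\ref{GenEmbed} by an analogue in which the hypothesis $\Delta(J) \le \Delta$ is weakened to ``$J$ has degeneracy at most $d$ and maximum degree $o(n)$.'' The plan is to follow the two-phase structure of the original proof: choose a reserve $I \subset V(J)$ of size $|P|$ (where $P$ is the set of bad vertices of $F$) so that $I \cup \Gamma(I)$ has controlled size, embed $I$ bijectively onto $P$ and extend to $\Gamma(I)$, then complete by the minimum-bad-edge swap argument. The degeneracy hypothesis supplies the required independent-like $I$ and, via the edge count $|E(J)| \le dn$, controls the number of high-degree vertices of $J$ (at most $2d/\delta$ vertices can have degree exceeding $\delta n$); a bandwidth-respecting degeneracy ordering of $J$ then provides a natural enumeration along which to perform the swap.

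The main obstacle is this degenerate version of Lemma~\ref{GenEmbed}. In the original proof, the bound $\Delta(J) \le \Delta$ is used both to keep the reserve $I \cup \Gamma(I)$ small and to ensure the swap step strictly decreases the number of bad edges. Under degeneracy, the high-degree vertices of $J$ may be numerous, and one must arrange that they all land on ``good'' vertices of $F$ and never participate in the swap step; verifying that the counting still closes when swapping a vertex $b$ for a candidate $c$---in particular that $c$ can be chosen so that every neighbour of $c$ in $J$ has image adjacent to $\psi(b)$---requires care, but should follow from the sublinearity of $\Delta(J)$ together with degeneracy. With these two ingredients in hand, the proof of Theorem~\ref{NExpAlwaysGood} goes through essentially unchanged to yield the conjecture.
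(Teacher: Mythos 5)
This statement is presented in the paper as an open conjecture (Section~7), not as a theorem, so there is no proof of record to compare against. Evaluating your proposal on its own merits, there is a genuine gap, and it is precisely the gap that presumably led the authors to leave this as a conjecture rather than a theorem.

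Your observation that $\bw(G)\le\beta(n)$ forces $\Delta(G)\le 2\beta(n)=o(n)$ is correct, but ``$\Delta=o(n)$'' is far weaker than ``$\Delta$ bounded,'' and neither of your two key ingredients transfers under it. First, in the Regularity/Blow-up framework used to prove Theorem~\ref{NExpAlwaysGood}, the quantifier order is: fix $H$ and $\Delta$, \emph{then} choose $\eps'$, $\eps$, $k_0$, $K$, $\beta$, $n_0$, and only then consider $G$ on $n\ge n_0$ vertices. The constant $\eps$ must satisfy $\eps<1/(\Delta^2+4)$ (needed for Lemma~\ref{GenEmbed}) and feeds into the Regularity Lemma, which in turn forces $n_0$ to be tower-type in $1/\eps$. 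In the class $\mathcal{G}'_{d,\beta}$ with, say, $\beta(n)=n/\log n$, the maximum degree of an $n$-vertex $G$ can be as large as $2n/\log n$, so $\Delta$ depends on $n$ and cannot be fixed before the regularity parameters are chosen; attempting to make $\eps=\eps(n)\to 0$ produces $n_0(\eps(n))$ growing far faster than $n$, an unresolvable circularity. No known degenerate Blow-up Lemma allows $\Delta(G)$ as large as $o(n)$; the Allen--B\"ottcher--H\`an--Kohayakawa--Person work you cite is in the sparse-host regime, not the large-$\Delta(G)$ regime, and other degenerate blow-up variants still require at most polylogarithmic maximum degree.

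Second, the degenerate analogue of Lemma~\ref{GenEmbed} does not go through in the way you sketch. In the original proof, the set $I$ must be a distance-$>2$ independent set of $J$ of size $|P|\le\eps n$; a greedy choice works precisely because each chosen vertex forbids at most $\Delta^2$ others, requiring $\eps n<n/(\Delta^2+1)$. Degeneracy does not control the size of $2$-balls: already a tree ($d=1$) with a vertex of degree $\Delta$ can have a $2$-ball of size $\Theta(\Delta^2)$, so the greedy bound does not improve. More seriously, the swap step requires $(1-2\Delta\eps)n-2\Delta\eps n>0$, i.e.\ $\eps<1/(4\Delta)$, and the count of vertices of $J$ having a neighbour in a $2\eps n$-vertex subset of $F$ is bounded in the original argument by $2\Delta\eps n$ using $\Delta(J)\le\Delta$; with only a degeneracy bound this count can be $\Omega(dn)$ (a few very high-degree vertices can account for almost all edges), destroying the counting. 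You mention that care is required in this step, but the issue is not care: the inequality simply fails once $\Delta$ is superconstant, and there is no obvious surrogate supplied by degeneracy plus a sublinear degree bound. To make progress on this conjecture one would need a genuinely new embedding idea that exploits degeneracy in place of bounded degree, not merely a parameter transfer of the existing argument.
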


We discussed earlier the need for $n$ to be quite large in terms of $|H|$ in order for $R(P_n^k,H)$ to be as small as
$(\chi(H)-1)(n-1) + \sigma(H)$, for $k \ge 2$; we also mentioned the following conjecture.

\begin{conjecture}
For every graph $H$,
$R(P_n,H) = (\chi(H)-1)(n-1) + \sigma(H)$ whenever $n \ge |H|$.
\end{conjecture}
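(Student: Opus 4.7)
The plan is to prove the conjecture by strong induction on $|H|$. Only the upper bound requires work, since the lower bound is Lemma~\ref{LowBd}. Write $\ell = \chi(H)-1$, $\sigma = \sigma(H)$, and $N = \ell(n-1)+\sigma$; given a two-coloured $K_N$ with no red copy of $P_n$, the task is to find a blue copy of $H$.

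First, a short list of trivial reductions handles easy cases. The case $|H| = 1$ is immediate. If $H$ has an isolated vertex $v$, then $|H-v| < n$, and either $\chi(H-v) = \chi(H)$ and $\sigma(H-v) = \sigma - 1$ (if $\sigma \ge 2$ and we remove $v$ from a smallest colour class), or $\chi(H-v) = \ell$ and $\sigma(H-v) \le n-1$ (if $\sigma = 1$, so $v$ is itself the singleton class). In both sub-cases the induction hypothesis gives $R(P_n, H-v) \le N-1$, so $K_N$ contains either the forbidden red $P_n$ or a blue $H-v$, which extends to a blue $H$ by attaching any unused vertex as $v$. Hence we may assume $H$ has no isolated vertex.

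For the main inductive step, I would take a maximum red path $P = v_1 v_2 \cdots v_p$ (necessarily $p \le n-1$), set $u = v_1$, and let $X = V(K_N) \setminus V(P)$. The maximality of $P$ forces every neighbour of $u$ outside $V(P)$ to be a blue neighbour, so $X \subseteq N_{\mathrm{blue}}(u)$; the same applies to $v_p$. We have $|X| \ge (\ell-1)(n-1) + \sigma$. The natural next move is to pick a vertex $w$ in a smallest colour class of $H$, embed $H - w$ in blue inside $G[X]$ by induction, and set $\phi(w) = u$ to recover a blue copy of $H$.

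The main obstacle is that this straightforward reduction just barely fails to close: the induction hypothesis guarantees a blue $H - w$ inside $G[X]$ only when $|X| \ge (\chi(H-w)-1)(n-1) + \sigma(H-w)$, and a direct check shows that in every case---whether $\sigma \ge 2$ with $\chi(H-w) = \chi(H)$ and $\sigma(H-w) = \sigma - 1$, or $\sigma = 1$ with $w$ colour-critical so $\chi(H-w) = \ell$---the required lower bound on $|X|$ exceeds what we have by roughly $n - 2$. A particularly awkward instance is when $\sigma = 1$ and $\sigma(H-w)$ actually \emph{increases}, as in $H = C_5$, $H-w = P_4$, $\sigma$ going from $1$ to $2$. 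To close this gap I would try to exploit more than a single blue star at $u$: by Theorem~\ref{ErdosGallai}, the red subgraph of $G$ is very sparse (at most $(n-2)N/2$ edges), and one can show that \emph{many} interior vertices of $P$ behave like the endpoints, each providing a free blue star to $X$. Using several such stars simultaneously, one can hope to embed $w$ together with a small gadget around it in one shot, reducing to a strictly simpler embedding problem inside $G[X]$ where the induction does close. I expect the colour-critical case to be the real difficulty: since even the special subcase $R(C_n, K_\ell) = (\ell-1)(n-1)+1$ for $n \ge \ell$ is the well-known open conjecture of Erdős, Faudree, Rousseau and Schelp---the best available bound being Nikiforov's~\cite{Nikiforov} $n \ge 4\ell+2$---the full conjecture almost certainly requires a stability analysis of near-extremal two-colourings of $K_N$, closely modelled on Nikiforov's argument, which we have not been able to carry out in the general $H$ setting.
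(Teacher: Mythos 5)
This statement appears in the paper's \emph{Open Problems} section as an explicit conjecture; the authors offer no proof of it, and it is, to the best of current knowledge, genuinely open. So there is no argument in the paper to compare against, and your self-assessment---that the natural inductive strategy does not close the gap---is correct and honest.

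A few remarks on the sketch. The shape of your attempted induction (take a maximum red path $P$, use maximality to force the endpoint $u$ to have only blue edges to $X = V(K_N)\setminus V(P)$, embed a blue $H-w$ in $G[X]$, and place $w$ at $u$) is exactly the classical Erd\H{o}s/Chv\'atal template, and it is the same skeleton the paper uses for its stability result Lemma~\ref{StabLem2}. Your arithmetic is also right: $|X|\ge (\ell-1)(n-1)+\sigma$ is short of the $(\chi(H-w)-1)(n-1)+\sigma(H-w)$ required by the induction hypothesis by essentially $n-1$ whenever $\chi(H-w)=\chi(H)$, and the $\sigma=1$, $w$ colour-critical case is indeed where the recursion is most delicate (there $\chi$ drops, but one then needs to hit $u$ with a whole new colour class, not a single vertex). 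So the honest conclusion you draw---that a stability analysis of near-extremal colourings is needed and that you cannot carry it out---is the right one.

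One small inaccuracy: you cite the Erd\H{o}s--Faudree--Rousseau--Schelp conjecture, that $R(C_n,K_\ell)=(\ell-1)(n-1)+1$ for $n\geq\ell$, as ``a special subcase'' of this conjecture. It is not: the present statement concerns $R(P_n,H)$, not $R(C_n,H)$, and $C_n$ is not a path. (The paper does, earlier in Section~3, phrase a broader conjecture covering both $P_n$ and $C_n$ and explicitly names EFRS as a case of \emph{that} broader statement, which may be the source of the confusion; but the Open Problems version is specifically about $P_n$.) In fact the $R(P_n,K_\ell)$ subcase of the present conjecture, $\sigma(H)=1$ with $H$ complete, is Chv\'atal's tree--complete-graph theorem and holds for all $n$, so it is not a barrier. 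The genuinely hard colour-critical cases are for non-complete $H$ with $\sigma(H)=1$, and there the EFRS-style stability difficulties you anticipate are plausible but not literally the same open problem.
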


We believe that our lower bound on $R(P_n^k,P_n^k)$ is in fact the correct value for this Ramsey number.
We state this conjecture for $n$ a multiple of $k+1$, for convenience, but we believe that our construction in
Section~\ref{SecPnkCnk} is optimal for all sufficiently large values of $n$.

\begin{conjecture} For $k \ge 2$, and $n$ a sufficiently large multiple of $k+1$, we have
$$
R(P^k_n,P^k_n) = (k+1)n-2k.
$$
\end{conjecture}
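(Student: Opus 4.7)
The plan is to combine the Szemer\'edi Regularity Lemma with a stability analysis of near-extremal 2-colourings, in the spirit of Kohayakawa, Simonovits and Skokan's proof that $R(C_n,C_n,C_n)=4n-3$. First I would apply the regularity lemma to a 2-coloured $K_N$ with $N=(k+1)n-2k$, obtaining a cluster graph $R$ on $r$ vertices of common size $L=N/r$, each pair labelled by the majority colour (with sparse or non-regular pairs flagged as `bad' and appearing at each vertex in bounded number). By Theorem~\ref{BSTThm} it then suffices to locate a monochromatic $P^k_m$ in $R$ with $mL\ge n+o(n)$, since such a cluster path can be blown up to a monochromatic $P^k_n$ in the original colouring.

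The heart of the proof is the following cluster statement: any 2-edge-colouring of a nearly complete graph on $(k+1)m-2k$ vertices contains a monochromatic $P^k_m$. A direct appeal to Theorem~\ref{RPnkCnk} is insufficient, as it yields only roughly $(2k+2+2/(k+1))m$. To sharpen this I would prove a stability companion to Lemma~\ref{StabPnkH}, stating that if a 2-coloured $K_{(k+1)m-2k}$ admits no monochromatic $P^k_m$, then up to a small exceptional set it resembles one of the blow-up constructions described after Theorem~\ref{LowerBound}, each parameterized by a red-blue-coloured $K_{k,k}$ in which every vertex of each side sees both colours. Such stability would be established by iterating Lemma~\ref{MyBlowUp} to extract either a long monochromatic cluster $P^k$-structure, or else a rigid bipartite-like block decomposition of the vertex set.

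In the near-extremal case I would exploit the $O(k)$ surplus vertices present beyond the Chv\'atal--Harary-type lower bound. In each blow-up construction, any red $P^k$ already saturates one $(A_i,B_i,\ldots)$-block; crossings between blocks are permitted only through very restricted vertex sequences dictated by the auxiliary $K_{k,k}$. An extra vertex added to such a configuration must either extend a monochromatic $P^k_{n-1}$ to a $P^k_n$, or create an adjacency forbidden by the rigidity of the construction. Ruling out all assignments of the surplus vertices, over all possible parameterizing $K_{k,k}$'s and all choices of cross-colouring of the small leftover set $C$, would complete the proof.

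The main obstacle is precisely the stability result. The family of extremal constructions forms a combinatorially rich space, parameterized by 2-colourings of $K_{k,k}$, so the stability conclusion cannot be a single canonical partition as in Lemma~\ref{StabPnkH:modified}; it must branch over these possibilities. Moreover, since both the target graph and the Ramsey host graph scale with $n$, the Erd\H{o}s--Szekeres-type clique extraction used in Lemma~\ref{StabPnkH} is no longer available in the same form and would have to be replaced by regularity-based extremal arguments attuned to $P^k$-structures. A natural warm-up is the case $k=2$, where the parameterizing $K_{2,2}$ admits essentially one admissible colouring up to symmetry; developing techniques there that generalize to arbitrary $k$ appears to be the decisive technical step.
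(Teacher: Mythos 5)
This statement is an open conjecture in the paper; the authors offer no proof, and indeed state only that they ``believe'' the construction of Theorem~\ref{LowerBound} is optimal. Your submission is therefore a research programme rather than a proof, and to your credit you identify that clearly in the last paragraph. There is no paper argument to compare against, so let me instead assess whether the programme could, in principle, close the conjecture.

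There are two genuine gaps, one of which you name and one of which you do not. The one you name is the missing stability companion to Lemma~\ref{StabPnkH} for the symmetric problem $R(P_n^k,P_n^k)$, where the extremal family is parameterised by colourings of $K_{k,k}$; this is indeed hard, and the paper's own discussion at the end of Section~7 hints at exactly this obstacle (needing not only a monochromatic power of a path in the cluster graph, but also additional structure such as a suitably placed $(k+1)$-clique to redistribute vertices between colour classes). The gap you do not name is more fundamental: the regularity-lemma route cannot, even in principle, deliver the exact additive constant $-2k$. Theorem~\ref{BSTThm} requires a copy of $P^{\chi-1}_{(\eta+\mu)r}$ in the cluster graph with a \emph{fixed} surplus $\mu>0$ of clusters, and the cluster graph already discards the exceptional cluster $Z_0$ of size up to $\eps n$. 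Every step of the pipeline you describe loses $\Theta(n)$ vertices (exceptional set, non-regular pairs, the $\mu$-slack needed for the embedding), so the best conclusion reachable by this strategy is $R(P_n^k,P_n^k)\le (k+1)n + \eps n$ for any $\eps>0$ and $n$ sufficiently large -- which would be a substantial and valuable result, matching the conjecture asymptotically, but not the exact equality asserted. For the exact constant one needs a surgery-free argument of the kind used in Lemma~\ref{StabPnkH} (clique extraction via Ramsey's theorem, leaving only an $\eps n$ leftover which is then reabsorbed exactly using Lemma~\ref{GenEmbed}), but there the role of $H$ being \emph{fixed} is essential; adapting that machinery to the case where the second graph also scales with $n$ is an unaddressed and nontrivial problem. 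In summary: the plan could plausibly lead to the asymptotic version $R(P_n^k,P_n^k)=(k+1)n+o(n)$, conditional on proving the stability lemma you sketch, but it is structurally incapable of yielding the exact formula in the conjecture.
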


We believe that the same result is also true for $C^k_n$.

A proof of the above conjecture would give some improvement in the 
bound in Theorem~\ref{BBwRam}.  As mentioned in the introduction, we 
expect the following to be true.

\begin{conjecture}
 For each $\Delta \ge 1$, there exist $n_0$, $\beta$  and $C$ such 
 that, whenever $n \ge n_0$ and $H$ is an $n$-vertex graph with maximum 
 degree at most $\Delta$ and bandwidth at most $\beta n$, we have 
 $R(H,H) \le (\chi(H)+C)n$.
\end{conjecture}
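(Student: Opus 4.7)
The natural architecture is exactly that of the proof of Theorem~\ref{BBwRam}, with the Ramsey input (Lemma~\ref{SzPartPnk}) replaced by one closer to the conjectured truth $R(P_n^k,P_n^k) = (k+1)n - O(1)$.  The factor $2\chi(H)+4$ in Theorem~\ref{BBwRam} arises precisely because Lemma~\ref{SzPartPnk} needs roughly $(2k+3)n$ vertices to force a monochromatic $P_n^k$ in a three-coloured near-complete graph; pushing this down to $(k+1+\eps)n$, keeping the same per-vertex bad-edge slack, would automatically yield $R(H,H) \leq (\chi(H)+C)n$ for some $C = C(\Delta)$.

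Granted this strengthening of Lemma~\ref{SzPartPnk}, the rest of the argument is straightforward.  Given $H$ with $\chi := \chi(H)$, $\Delta(H) \leq \Delta$, and $\bw(H) \leq \beta n$, apply Szemer\'edi's Regularity Lemma to the red subgraph of a two-coloured $F = K_{(\chi+C)n}$, and form the three-coloured cluster graph $R(F)$ as in the proof of Theorem~\ref{BBwRam}: edges coming from $\eps$-regular pairs of red-density at least $\tfrac12$ are red, those from $\eps$-regular pairs of red-density below $\tfrac12$ are blue, and all other edges are bad.  Since $R(F)$ has $\approx (\chi+C-1)m$ vertices and each vertex is incident with only $O(\eps r)$ bad edges, the strengthened Ramsey lemma applied to $R(F)$ supplies a monochromatic copy of $P_m^{\chi}$ of length exceeding $(\eta+\mu)r$, where $\eta = 1/(\chi+C)$; this $P_m^{\chi}$ contains a $P_m^{\chi-1}$ in which every $\chi$-clique lies in a $(\chi+1)$-clique.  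Theorem~\ref{BSTThm}, applied with $\eta = 1/(\chi+C)$ and parameters $\mu,\gamma$ depending only on $\Delta$, then embeds $H$ into the monochromatic majority of $F$.  The bookkeeping works out for any constant $C$ slightly larger than $1 + \mu(\Delta+1)^2$ (which itself depends only on $\Delta$, since $\chi \leq \Delta+1$), so a universal choice like $C = 3$ suffices once $\mu$ is fixed as in the proof of Theorem~\ref{BBwRam}.

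The main obstacle is therefore the improvement of Lemma~\ref{SzPartPnk}, equivalently a near-optimal bound on $R(P_n^k,P_n^k)$.  The present proof partitions the host graph into monochromatic $s$-cliques, keeps only those of the majority colour, and searches for a monochromatic $P_n^k$ among those --- losing a factor of roughly $2$ in the process.  To remove this loss one appears to need a genuine stability analysis applied to the two-coloured auxiliary graph on \emph{all} the cliques, broadly of the following shape: either one colour already admits a long auxiliary red-adjacent path (which fattens into the desired $P_n^k$ via Lemma~\ref{MyBlowUp}), or the two-coloured auxiliary graph is close to the bipartite-template extremal configuration of Theorem~\ref{LowerBound}, in which case one should be able to extract the required monochromatic power of a path in the complementary colour by combining Lemma~\ref{StabLem2} (applied simultaneously to both colour classes of cliques) with the K\"ov\'ari--S\'os--Tur\'an bound on red/blue crossing edges.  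This is a substantial open problem in its own right --- it is listed explicitly as one of the conjectures immediately preceding the present one --- and its resolution is really the crux of the conjecture.
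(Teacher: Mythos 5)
This statement is an open conjecture of the paper, not a theorem; there is no proof in the paper for you to reproduce. Your reduction matches the authors' own heuristic remarks after the proof of Theorem~\ref{BBwRam}: they observe that if, for every $\eps>0$, there is $\delta>0$ such that any two-coloured $K_{R(P^k_n,P^k_n)+\eps n}$ still yields a monochromatic $P^k_n$ after deleting $\delta n$ edges per vertex, then the Regularity Lemma / Theorem~\ref{BSTThm} pipeline of Theorem~\ref{BBwRam} would give $R(G,G)\le R(P^k_n,P^k_n)+\eps n$ for bounded-degree, low-bandwidth $G$ with $\chi(G)=k$; combined with the conjectured $R(P^k_n,P^k_n)=(k+1)n-2k$, this yields exactly $(\chi(H)+C)n$ for a constant $C$ depending only on $\Delta$. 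Your bookkeeping --- $\eta=1/(\chi+C)$, $\mu$ fixed as in the proof of Theorem~\ref{BBwRam}, $C$ slightly above $1$ plus a $\mu$-dependent slack --- is of the right shape.

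The genuine gap, which you correctly flag, is the degree-robust, near-optimal replacement for Lemma~\ref{SzPartPnk}, equivalently a robust form of the paper's separate conjecture $R(P^k_n,P^k_n)=(k+1)n-2k$. Neither the paper nor your proposal supplies this: the final paragraph of your argument, suggesting a stability analysis on the full two-coloured auxiliary clique graph via Lemma~\ref{StabLem2} and K\"ov\'ari--S\'os--Tur\'an, is a plausible direction but stops well short of a proof, and the authors themselves merely identify the ``throw away the minority-colour cliques'' step in the proof of Theorem~\ref{RPnkCnk} as the source of the factor-of-two loss without indicating how to repair it. What you have written is a correct reduction of the conjecture to a prior open conjecture of the same paper, which is precisely the state of the art and not a proof.
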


As discussed at the end of the previous section, we may be able to take 
$C$ to be arbitrarily small.

The graph $P^3_n$ is easily seen to be planar for every $n$; by
Theorem~\ref{LowerBound} we have $R(P^3_n,P^3_n)\geq 4n-6$ when $4$ divides
$n$. We know of no planar graphs with larger Ramsey number, but we have not
made any serious efforts to discover such. 
Chen and Schelp proved~\cite{ChenSchelp} that there exists an absolute 
constant $C$ such that $R(H,H)\leq Cn$ for every $n$-vertex planar 
graph $H$. The best value known to us for $C$ is obtained by combining a 
theorem of Graham, R\"odl and Ruci\'nski~\cite{GRRLinRam} (essentially Theorem~\ref{GRR}) with the
Kierstead-Trotter bound~\cite{KieTrot} that all planar graphs are $10$-arrangeable, which yields
$C\approx 10^{200}$. By Corollary~\ref{PlanarCor} we can reduce $C$ to $12$
for bounded degree planar graphs. We offer the following conjecture.

\begin{conjecture} For every sufficiently large $n$ and every planar graph $H$
on $n$ vertices, we have $R(H,H)\leq 12n$.
\end{conjecture}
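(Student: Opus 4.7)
The plan is to extend the argument of Corollary~\ref{PlanarCor} from bounded-degree to arbitrary planar graphs. The Four Colour Theorem gives $\chi(H) \le 4$, so $2\chi(H)+4 \le 12$, and the bound in Theorem~\ref{BBwRam} is exactly the conjectured $12n$; the only obstacle to applying that theorem directly is the lack of a degree bound. However, a planar graph has at most $3n-6$ edges, so high-degree vertices are scarce, and I would exploit this to reduce to the bounded-degree case.

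Fix a large constant $D$ (depending only on $n_0$), and partition $V(H) = A \cup B$, where $A = \{v : \deg_H(v) \ge D\}$. The handshake lemma gives $|A| \le 6n/D$, so $|A|$ can be made an arbitrarily small fraction of $n$ by choosing $D$ large. The induced subgraph $H[B]$ is planar with maximum degree at most $D$, so by Theorem~\ref{BPTW} applied to planar graphs, $\bw(H[B]) = O(|B|/\log_D |B|) = o(|B|)$. Thus Theorem~\ref{BBwRam} with $\Delta = D$ applies to $H[B]$: every two-colouring of $K_{12|B|}$ contains a monochromatic copy of $H[B]$.

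Now, given a two-colouring of $K_{12n}$, I would reserve a ``buffer'' set $R \subset V(K_{12n})$ of $\mu n$ vertices as hosts for the images of $A$, and run the proof of Theorem~\ref{BBwRam} on the remaining $K_{12n} \setminus R$ (of size $(12-\mu)n \ge 12|B|$, once $D$ is large enough) to obtain a monochromatic---say red---copy of $H[B]$. It then remains to place the vertices of $A$ in $R$: for each $v \in A$ a distinct vertex $\phi(v) \in R$ must be chosen that is red-adjacent to $\phi(w)$ for every neighbour $w$ of $v$ already embedded.

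The main obstacle is this final extension step, which is delicate because a single $v \in A$ may have neighbourhood of size comparable to $n$ in $H$, so the chosen $\phi(v) \in R$ must have a very specific, large red neighbourhood. To handle this, I would need a rooted strengthening of Theorem~\ref{BSTThm}---in the spirit of the Blow-up Lemma of Koml\'os, S\'ark\"ozy and Szemer\'edi~\cite{Blowup} with image restrictions---that, before embedding $H[B]$, pre-allocates for each $v \in A$ a candidate host $r_v \in R$ together with target clusters for $N_H(v) \cap B$, chosen so that $r_v$ has large red-degree into each such target cluster. Running the embedding of $H[B]$ under these extra constraints would automatically make $r_v$ red-adjacent to $\phi(N_H(v) \cap B)$, completing the red copy of $H$. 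Making this rigorous while accommodating the global interactions among all vertices of $A$ simultaneously---and keeping the multiplicative constant exactly $12$ rather than, say, $12 + o(1)$---appears to be the crux of this open conjecture.
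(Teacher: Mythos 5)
This is one of the paper's open conjectures, so there is no proof to compare against; you are attempting to resolve it, and you are commendably honest that you have not done so. Your reduction to the bounded-degree case is the natural first move, and the arithmetic is consistent: with $|A|\le 6n/D$, one can indeed fit a buffer $R$ of size $\mu n$ with $|A|\le \mu n\le 12|A|$, leaving $12n-\mu n\ge 12|B|$ vertices on which to invoke Theorem~\ref{BBwRam}, and $\bw(H[B])\le 15|B|/\log_D|B|=o(|B|)$ by the planarity bound of B\"ottcher, Pruessmann, Taraz and W\"urfl. So far so good.

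The gap you flag, however, is deeper than your phrasing suggests, and it is not clear the ``rooted embedding'' strategy can be made to work even in principle. The core difficulty is one of quantifier order. The adversary colours $K_{12n}$ first; only then do you get to choose the buffer $R$ and run the embedding of $H[B]$ in $K_{12n}\setminus R$. Theorem~\ref{BBwRam} (via Lemma~\ref{SzPartPnk} and Theorem~\ref{BSTThm}) gives a monochromatic $H[B]$ but provides essentially no control over where in the graph, or in which colour, this copy lands, and the buffer $R$ plays no role in that argument. To extend to $A$ you need each $\phi(v)$, $v\in A$, to have red-degree $\sim\deg_H(v)$ into the specific image $\phi(N_H(v)\cap B)$, and these target sets may be of linear size. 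The adversary can colour so that no vertex of $K_{12n}$ has large red-degree into the eventual image of any particular large $N_H(v)\cap B$; a rooted version of the embedding would have to pre-commit, before the regularity-based machinery has been run, to a placement of $H[B]$ that makes the right red bipartite densities available between $R$ and the relevant clusters, \emph{simultaneously} for all $v\in A$, including edges within $A$. This is not a local fix to Theorem~\ref{BSTThm}; it requires rebuilding the stability analysis of Lemma~\ref{StabPnkH} so that the partition it produces is compatible with a prescribed set of ``anchor'' vertices having prescribed linear-sized red neighbourhoods, and nothing in the present paper controls this. A secondary issue you note -- keeping the constant at $12$ rather than $12+o(1)$ -- is genuine but minor by comparison; the slack $12|A|-|A|$ absorbs additive losses easily once $D$ is large. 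The honest assessment is that your proposal restates the conjecture as a (plausible but open) embedding-with-anchors problem, which is precisely what the paper's authors also leave unresolved: their Corollary~\ref{PlanarCor} handles bounded degree exactly because there the anchor problem does not arise.
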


We know of no example of a planar graph $H$ on $n$ vertices for which $R(H,H)>4n+2$: $H=K_4$ and $H=K_5 - e$ attain this bound (see \cite{Radz}).



\end{document}